\newtheorem{theorem}[subsubsection]{Theorem}
\newtheorem{prop}[subsubsection]{Proposition}
\newtheorem{conjecture}[subsubsection]{Conjecture}
\newtheorem{lemma}[subsubsection]{Lemma}
\newtheorem{cor}[subsubsection]{Corollary}
\newtheorem{definition}[subsubsection]{Definition}
\theoremstyle{remark}
\newtheorem{remark}[subsubsection]{Remark}
\DeclareMathOperator{\colim}{colim}
\DeclareMathOperator{\Hom}{Hom}
\DeclareMathOperator{\Spec}{Spec}
\DeclareMathOperator{\ind}{\on{Ind}}
\DeclareMathOperator{\Fact}{Fact}
\DeclareMathOperator{\Lie}{Lie}
\DeclareMathOperator{\oo}{\infty}
\DeclareMathOperator{\m}{\rm Mod}
\DeclareMathOperator{\alg}{\mathrm{-alg}}
\DeclareMathOperator{\coalg}{\mathrm{-coalg}}
\DeclareMathOperator{\prim}{Prim}
\DeclareMathOperator{\ot}{\otimes}
\DeclareMathOperator{\ch}{\rm ch}
\DeclareMathOperator{\op}{\rm op}
\DeclareMathOperator{\id}{\rm id}
\DeclareMathOperator{\C}{C}
\DeclareMathOperator{\ran}{{R}an}
\newcommand{\ra}{\rightarrow}
\newcommand{\squig}{\rightsquigarrow}
\newcommand{\fib}{\twoheadrightarrow}
\def\cA{\mathcal A}\def\cC{\mathcal C}\def\cD{\mathcal D}
\def\cE{\mathcal E}\def\cF{\mathcal F}
\def\cM{\mathcal M}\def\cO{\mathcal O}\def\cP{\mathcal P}
\def\cR{\mathcal R}
\def\cX{\mathcal X}
\def\cY{\mathcal Y}
\def\RR{\mathbb R}
\def\fD{\mathfrak D}
\numberwithin{equation}{section}
\newcommand{\nc}{\newcommand}
\nc{\renc}{\renewcommand}
\nc{\on}{\operatorname}
\nc{\ssec}{\subsection}
\nc{\sssec}{\subsubsection}
\DeclareMathOperator{\fset}{\on{fSet}}
\DeclareMathOperator{\Sch}{{\rm Sch}}
\nc{\ooCat}{\oo\on{-Cat}}
\nc{\Vect}{\on{Vect}}
\nc{\Funct}{\on{Funct}}
\nc{\Op}{\on{Op}}
\nc{\coOp}{\on{coOp}}
\renc{\Bar}{\on{Bar}}
\nc{\coBar}{\on{Cobar}}
\nc{\triv}{\on{triv}}
\nc{\oblv}{\on{oblv}}
\nc{\Free}{\on{Free}}
\nc{\coFree}{\on{coFree}}
\nc{\nilpcoalg}{\coalg^{\on{nil}}}
\nc{\BG}{\mathbb{G}}
\nc{\BN}{\mathbb{N}}
\nc{\BR}{\mathbb{R}}
\nc{\fg}{\mathfrak g}
\nc{\wh}{\widehat}
\renc{\mod}{\on{-mod}}
\nc{\comod}{\on{-comod}}
\nc{\Comod}{\on{Comod}}
\nc{\nilpcomod}{\on{-comod}^{\on{nil}}}
\nc{\Factalg}{\on{FactAlg}}
\nc{\Factmod}{\on{-FactMod}}
\begin{document}

\title{Chiral Koszul duality}
\author{John Francis and Dennis Gaitsgory}
\address{Department of Mathematics\\Northwestern University\\Evanston, IL 60208-2370}
\email{jnkf@math.northwestern.edu}
\address{Department of Mathematics\\Harvard University\\Cambridge, MA 02138-2901}
\email{gaitsgde@math.harvard.edu}

\begin{abstract} We extend the theory of chiral and factorization algebras, developed for curves by Beilinson and Drinfeld in \cite{bd}, to higher-dimensional varieties. This extension entails the development of the homotopy theory of chiral and factorization structures, in a sense analogous to Quillen's homotopy theory of differential graded Lie algebras. We prove the equivalence 
of higher-dimensional chiral and factorization algebras by embedding factorization algebras into a larger category of chiral commutative coalgebras, then realizing this interrelation as a chiral form of Koszul duality. We apply these techniques to rederive some fundamental results of \cite{bd} on chiral enveloping algebras of $\star$-Lie algebras.
\end{abstract}

\keywords{Chiral algebras. Chiral homology. Factorization algebras. Conformal field theory.  Koszul duality. Operads. $\oo$-Categories.}

\subjclass[2010]{Primary 81R99. Secondary 14H81, 18G55.}

\maketitle

\tableofcontents

\section{Introduction}

Beilinson and Drinfeld developed the theory of chiral and factorization (co)algebras on curves in their seminal work, \cite{bd}, as a geometric counterpart of the algebraic theory of vertex algebras. Their theory translated the formulae of operator product expansions in conformal field theory into beautiful algebraic geometry. These two algebraic avatars of conformal field theory at first blush appear quite dissimilar:  A chiral Lie algebra is a D-module on a curve with a type of Lie algebra structure in which one has the extra ability to take the Lie bracket of certain divergent sections; a factorization coalgebra consists of a quasi-coherent sheaf on each configuration space of a curve, with certain compatibilities. One of the conceptually central results of \cite{bd} (Theorem 3.4.9) establishes the equivalence of these two theories of chiral Lie algebras and of factorization coalgebras on algebraic curves.

\medskip

Beilinson and Drinfeld posed several challenges left open by their work: first, to extend their theory above complex dimension 1. Second, in order to sensibly extend the theory to varieties, they observed the necessity of developing the homotopy theory of 
chiral Lie algebras (in a sense analogous to Quillen's homotopy theory of differential graded algebras), a problem of independent interest.\footnote{For instance, the category of chiral Lie algebras on a curve $X$ lacks coproducts, hence it cannot admit a model category structure.} 

\medskip

In this work, we develop just such a homotopy theory of chiral and factorization structures and apply it to prove a generalization of the above theorem of \cite{bd}, to establish an equivalence between chiral Lie algebras and factorization coalgebras on higher-dimensional varieties. The most appealing aspect of this proof is a reconceptualization of the relation between the two: The equivalence between chiral Lie algebras and factorization coalgebras is a form of Koszul duality, in which factorization coalgebras are realized as a full subcategory of a larger category of chiral commutative coalgebras.\footnote{It is for this reason that we take the liberty of adjusting the terminology ``factorization algebra" of \cite{bd} to ``factorization coalgebra," since they are, literally, coalgebras 
rather than algebras with respect to the chiral tensor structure. See Remark \ref{terminology}.} This is a chiral analogue of the duality between Lie algebras and commutative coalgebras that Quillen first developed in his work on rational homotopy theory, \cite{quillenrational}, in which the category of chain complexes, with tensor product, is replaced by that of D-modules on the Ran space, equipped with the chiral tensor product of D-modules. We shall see that despite this apparent increased complexity, chiral Koszul duality is more of a duality than usual Koszul duality, in the sense that the double dual is always a homotopy equivalence, without preconditions.

\medskip

Beilinson and Drinfeld's perspective on chiral versus factorization gave rise to an important new construction, the chiral homology of chiral Lie algebras, a homotopy-theoretic generalization of the space of conformal blocks in conformal field theory. The other primary focus of \cite{bd} was the calculation of chiral homology in several salient examples, including lattice chiral Lie algebras and chiral enveloping algebras of $\star$-Lie algebras. Chiral enveloping algebras are chiral analogues of the usual enveloping algebra of a Lie algebra; they appear in conformal field theory in the construction of affine Kac-Moody chiral Lie algebras, and as such serve as chiral versions of the Lie algebras of loop groups. To illustrate the efficacy of the Koszul duality viewpoint, as an application  we give a conceptual proof of Theorem 4.8.1.1 of \cite{bd}, which expresses the chiral homology of the chiral envelope of a $\star$-Lie algebra $L$ in terms of de Rham cohomology of $L$ itself.

\ssec{Why study chiral algebras?}

Before giving an overview of the contents of this paper, let us offer some general motivation for the study of chiral Lie algebras and factorization coalgebras. Broadly speaking, one can divide the reasons to study them into two classes: local and global.

\sssec{}
Locally, chiral Lie algebras and their representations on curves appear as a general formalism to study 
representation theory of Lie algebras that have a loop component, as well as categories obtained from the
category of Lie algebra modules by various functorial procedures. 

\medskip

For example, consider the Lie algebra of formal Laurent series $\fg(\!(t)\!)$, where $\fg$ is a finite-dimensional Lie algebra. There is a direct route to studying representations of $\fg(\!(t)\!)$, but in which one is required to take into account the topology on $\fg(\!(t)\!)$: This is certainly doable, though it makes homological algebra more cumbersome. However, to then further study those representations that are {\it integrable} (i.e., those that arise from differentiating positive energy representation of the loop group $G(\!(t)\!)$), becomes impracticable from the vantage of topological associative algebras.

\medskip

Another local aspect of the story is the connection between chiral Lie algebras and $\cE_2$-algebras. Via the Riemann-Hilbert correspondence, $\cE_2$-algebras form a full subcategory of chiral Lie algebras on the affine line, consisting of those chiral algebras whose underlying D-module is holonomic with regular singularities. 

\medskip

This perspective allowed one to rediscover chiral Lie algebras in their factorization incarnation
in the work of Schechtman-Varchenko and its elaboration by Bezrukavnikov-Finkelberg-Schechtman 
(see \cite{BFS} and references therein) on the construction of quantum groups via configuration spaces, 
and its relation to the Kazhdan-Lusztig equivalence between quantum groups and Kac-Moody 
representations. 

\medskip

Further, the discovery of factorization coalgebras led to the notion of a factorization category, which appears as a very potent tool for many problems of geometric representation theory (see \cite{Ga} for a brief review).  

\sssec{}
Let us now turn to the global aspects of the theory. For this discussion we assume that $X$ is complete. The overarching reason for the usefulness of chiral Lie algebras is that the procedure of taking chiral homology of chiral Lie algebras/factorization coalgebras is a powerful local-to-global principle. 

\medskip

For example, let $Y$ be a scheme affine over $X$, and suppose one is interested to study the scheme of its
global sections $X\to Y$. According to \cite{bd}, Theorem 4.6.1, this scheme can be described as
$\Spec$ of the chiral homology of a certain chiral algebra. 

\medskip

The above example is ``commutative" in the sense of \cite{bd}, Sect. 4.6. A non-commutative, but
relatively elementary, example of an application of the above local-to-global principle is the 
construction of Hecke eigensheaves in the geometric Langlands program carried out in \cite{bd1}.

\medskip

However, this local-to-global principle can be applied in significantly more sophisticated situations. In particular, it plays a prominent role in the recent advances in the geometric Langlands program, where one applies it in the case of chiral Lie algebra that controls twisted Whittaker sheaves. 

\medskip

We should also remark that the functor of chiral homology on the category of chiral Lie algebras 
bears a strong similarity with the assignment in quantum field theory to a collection of local observables of the value of the corresponding correlation function at a particular configuration of points on a compact space-time. 

\ssec{Contents}

We now review the contents of the paper and state our main results:

\medskip

Throughout the paper we will be working over a ground field $k$ of characteristic $0$. We will be working
with the category $\Sch$ of schemes of finite type over $k$, and for any $Y\in \Sch$ we denote by $\fD(Y)$ the 
stable $\oo$-category of D-modules on $Y$ (see Section  \ref{sss:Dmod} where our conventions
regarding $\fD(Y)$ are explained). 

\medskip

For the duration of the paper we fix $X$ to be a separated scheme of finite type over $k$. 

\sssec{The Ran space}

Our main geometric object of study is the Ran space of $X$, which should be thought of as the ``space of all finite configurations of points of $X$," and the category of D-modules on it. In other words, $\ran X$ is intuitively given by the union 
$\underset{j}\bigcup\, {\rm Conf}_j X$ of the configuration spaces of unordered points in $X$, topologized so that that two points may collide and pass to a different stratum, i.e., so that the map $X^I \ra \underset{j}\bigcup\, {\rm Conf}_j X$ is continuous for each $n$. However, this intuition does not immediately translate into a genuine definition: $\ran X$ does not exist as a scheme or even an ind-scheme, and so the category of D-modules on it is not a priori defined.  

\medskip

To remedy this, we can consider the structure that we would see if $\ran X$ did exist as described: For a D-module $M$ on $\ran X$, we could pull it back to $X^I$ to obtain a new D-module, $M^I$, for each finite set $I$; these D-modules would be subject to certain compatibilities under pullbacks, given a factorization $X^J\ra X^I \ra \ran X$. One should imagine that you can completely recover the D-module $M$ from this compatible family of $M^I$.

\medskip

This intuition gives rise to a formal definition. We define $\ran X$ as a functor from the category opposite that of finite sets to $\Sch$,
namely $I\squig X^I$, and define the $\oo$-category $\fD(\ran X)$ as the limit of $\fD(X^I)$ over finite maps.
I.e., an object $M\in \fD(\ran X)$ is by definition a collection of objects $M^I\in \fD(X^I)$ for each
finite set $I$ and a homotopy equivalence
$$\Delta(\pi)^!(M^I)\simeq M^J,$$
for every surjection $\pi:I\twoheadrightarrow J$, where $\Delta^\pi:X^J\to X^I$ denotes the corresponding
map. 

\medskip

Now, following \cite{bd} we introduce two symmetric monoidal structures on $\fD(\ran X)$. The first one, the $\star$-tensor product, should be thought of as the direct image with respect to the map
$$\on{union}:\ran X\times \ran X\longrightarrow \ran X$$
given by the operation of union of finite sets. I.e., it is convolution with respect to the abelian semi-group structure on $\ran X$.

\medskip

The other symmetric monoidal structure, the chiral tensor product, is the composition
$$\on{union}_*\circ \jmath_*\circ \jmath^*,$$
where $\jmath$ is the open embedding of the locus 
$$\left(\ran X\times \ran X\right)_{\on{disj}}\subset \ran X\times \ran X,$$
corresponding to pairs of finite subsets of $X$ that are disjoint. 

\medskip

In other words, one should think about these two tensor products as follows. For
$M_1,M_2\in \fD(\ran X)$, the fiber of $M_1\otimes^\star M_2$ (resp., $M_1\otimes^{\ch} M_2$)
at a point $\{S\}\in \ran X$, where $S\subset X$ is a finite non-empty subset is
$$\underset{S=S_1\cup S_2}\oplus\, (M_1)_{S_1}\otimes (M_2)_{S_2},$$
where for the $\star$-tensor product the direct sum is taken over all decompositions
as a union of non-empty subsets, and for the chiral tensor product we only take those
summands for which $S_1\cap S_2=\emptyset$.

\sssec{Chiral algebras and factorization coalgebras}

Consider the $\oo$-category $\fD(\ran X)$, endowed with the chiral tensor structure. We can
consider the categories of Lie algebras and commutative coalgebras in it, denoted 
$\Lie\alg^{\ch}(\ran X)$ and $\on{Com}\coalg^{\ch}(\ran X)$, respectively. 

\medskip

Inside $\Lie\alg^{\ch}(\ran X)$ we single out the full subcategory spanned by
objects that, as D-modules on $\ran X$, are supported on the main diagonal $X\subset \ran X$.
We denote this $\oo$-category by $\Lie\alg^{\ch}(X)$.  This is the $\oo$-category of chiral Lie algebras
introduced by \cite{bd}. 

\medskip

Inside $\on{Com}\coalg^{\ch}(\ran X)$ we single out a full subcategory of factorization coalgebras
that we denote by $\on{Com}\coalg^{\ch}_{\on{Fact}}(\ran X)$. We shall also use the notation
$\Fact(X)$ and refer to the objects of this $\oo$-category as factorization D-modules on $\ran X$. 
We shall now indicate its definition: 

\medskip

Let $B$ be a coalgebra in $\fD(\ran X)$. Let $S\subset X$ be a finite subset, and
$S=S_1\sqcup S_2$ be its decomposition as a disjoint union. Then the coalgebra
structure on $B$ defines a map at the level of fibers
\begin{equation} \label{e:naive factorization}
B_S\longrightarrow B_{S_1}\otimes B_{S_2}.
\end{equation}
The factorizability condition is that the above map should be a homotopy equivalence. 

\medskip

Note that the notion of factorization coalgebra can be encoded as an assignment
$$(S\subset X)\squig (B_S\in \Vect_k),$$ 
(and such that this system forms a D-module as $S$ ranges over $\ran X$), and 
a system of homotopy equivalences \eqref{e:naive factorization} that satisfy the natural
compatibility conditions under further partitions of finite sets into disjoint unions.
When written in this form, the notion of factorization D-module looks symmetric
from the algebra/coalgebra perspective.

\sssec{Koszul duality}

Let us now recall the following general construction. Let $\cC$ be a (not necessarily unital)
stable symmetric monoidal $\oo$-category over $k$.  We can consider the $\oo$-category
$\Lie\alg(\cC)$ of Lie algebras in $\cC$, and the $\oo$-category $\on{Com}\coalg(\cC)$ of commutative
coalgebras in $\cC$. These two $\oo$-categories are related by a pair of mutually adjoint functors

\begin{equation} \label{e:Koszul for Lie intro}
\xymatrix{
\Lie\alg(\cC) \ar[rr]<2pt>^{\C} && \on{Com}\coalg(\cC), \ar[ll]<2pt>^{\prim[-1]} \\}
\end{equation}
where the functor $\C$ is the functor computing Lie algebra homology, and the functor $\prim$
is the derived functor of taking primitive elements.

\medskip

The above functors are not in general equivalences of $\oo$-categories. However, they are
for a particular class of tensor $\oo$-categories $\cC$ that we call pro-nilpotent, and the $\oo$-category
$\fD(\ran X)$ is such. This will imply our main result:

\begin{theorem} \label{t:main}
The functors $\C^{\ch}$ and $\prim^{\ch}[-1]$ in $\fD(\ran X)$ define an 
equivalence $$\Lie\alg^{\ch}(\ran X) \simeq \on{Com}\coalg^{\ch}(\ran X)$$
of $\oo$-categories. Moreover, this equivalence induces an equivalence 
between the $\oo$-subcategories of chiral Lie algebras and factorization coalgebras on $X$:
\[\xymatrix{
\Lie\alg^{\ch} ( \ran X) \ar[rr]<2pt>^{\C^{\ch}} && \on{Com}\coalg^{\ch} (\ran X) \ar[ll]<2pt>^{\prim^{\ch}[-1]} \\
\Lie\alg^{\ch}(X) \ar@{_{(}->}[u]\ar[rr]<2pt>&& \mathrm{Fact}(X)\ar@{_{(}->}[u]\ar[ll]<2pt>\\}\]

\end{theorem}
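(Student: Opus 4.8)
The plan is to reduce the theorem to an abstract statement about Koszul duality in pro-nilpotent symmetric monoidal $\oo$-categories, and then to verify that $\fD(\ran X)$ with the chiral tensor product is an example. First I would establish the formalism of the Koszul duality adjunction \eqref{e:Koszul for Lie intro} in an arbitrary (non-unital) stable symmetric monoidal $\oo$-category $\cC$: the Chevalley functor $\C$ is computed by the complex $\Free^{\on{Com}}(\fg[1])$ with the Chevalley differential, i.e.\ as a (homotopy) colimit of the bar-type construction, and its right adjoint $\prim[-1]$ is the shifted cobar/cotangent construction on a commutative coalgebra. The unit and counit of this adjunction are then analyzed by a filtration argument: on the associated graded of the natural weight filtration, $\C$ and $\prim[-1]$ become the symmetric and exterior algebra functors composed with a shift, and the composite is the classical Koszul complex, which is acyclic in positive weights. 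The key input making this work without finiteness hypotheses is \emph{pro-nilpotence} of $\cC$: because the tensor powers $\otimes^n$ vanish in a suitable limiting sense, both the colimit defining $\C$ and the limit defining $\prim$ converge, and the filtration is exhaustive and complete, so acyclicity on associated graded pieces upgrades to the unit and counit being equivalences. This gives $\Lie\alg(\cC)\simeq\on{Com}\coalg(\cC)$ for any pro-nilpotent $\cC$.

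Next I would apply this with $\cC=\fD(\ran X)$ equipped with the chiral tensor structure $\otimes^{\ch}$, having verified (this is done earlier in the paper, so I may assume it) that $\otimes^{\ch}$ is a non-unital stable symmetric monoidal structure that is pro-nilpotent — the pro-nilpotence coming from the fact that the chiral tensor product, computed via $\on{union}_*\circ\jmath_*\circ\jmath^*$, raises the ``number of points'' and hence is filtered by the diagonal stratification of $\ran X$, with the associated graded controlled by the open strata of distinct points. This yields the first, unconditional equivalence $\Lie\alg^{\ch}(\ran X)\simeq\on{Com}\coalg^{\ch}(\ran X)$ of the theorem.

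The remaining and genuinely chiral part is to show that this equivalence carries the full subcategory $\Lie\alg^{\ch}(X)$ — chiral Lie algebras, i.e.\ those supported on the main diagonal $X\subset\ran X$ — onto the full subcategory $\Fact(X)$ of factorization coalgebras. I would argue this by unwinding both conditions at the level of fibers over a point $\{S\}\in\ran X$, using the explicit formula for the chiral tensor product: for $B=\C^{\ch}(\fg)$, the comultiplication fiber map \eqref{e:naive factorization}, $B_S\to B_{S_1}\otimes B_{S_2}$ for $S=S_1\sqcup S_2$ disjoint, is computed by the Chevalley complex, and it is an equivalence precisely when $\fg$, as a D-module on $\ran X$, has no ``interaction'' between disjoint regions beyond the chiral bracket — which is exactly the condition that $\fg$ is supported on the main diagonal. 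Concretely, supportedness of $\fg$ on $X$ forces the Chevalley complex of $\fg$ restricted to $(\ran X\times\ran X)_{\on{disj}}$ to split as an external tensor product, giving factorizability; conversely, factorizability of $B$ forces its primitives $\prim^{\ch}(B)[-1]$ to be diagonally supported, by an induction on the stratification of $\ran X$ showing that the off-diagonal components of $\prim$ are killed. I expect this last correspondence — matching the diagonal-support condition on the Lie side with the multiplicativity (factorization) condition on the coalgebra side, with all the homotopy-coherence bookkeeping of the compatibility under iterated partitions — to be the main obstacle, and the one requiring the most care: it is the higher-dimensional, homotopical avatar of Beilinson–Drinfeld's Theorem 3.4.9, and the cleanest route is probably to phrase both subcategories as limits/colimits over the category of finite sets and surjections and check that the Koszul duality functors are compatible with these diagrams stratum by stratum.
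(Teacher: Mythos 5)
Your proposal follows essentially the same route as the paper: reduce to an abstract Koszul duality statement for pro-nilpotent symmetric monoidal $\oo$-categories, verify that $\fD(\ran X)$ with $\otimes^{\ch}$ is pro-nilpotent via the stratification of $\ran X$ by cardinality of support (the paper's $\fD(\ran^{\leq n}X)$ together with Lemma \ref{l:ch product on open}), and then match diagonal support against factorizability using the weight filtration on the Chevalley complex with $\on{gr}^k(\C(L))\simeq \on{Sym}^k(L[k])$, exactly as in Sections \ref{ss:proof of nilp} and \ref{ss:factorization}. The only cosmetic difference is in the abstract step, where the paper argues via free algebras, derived Koszulness of the Lie operad, and a separate conservativity argument for $\coBar^{\rm enh}$ rather than a direct filtration analysis of the unit and counit, but this is a variation within the same strategy rather than a different proof.
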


In Section \ref{s:enveloping} we will apply Theorem \ref{t:main} to study chiral Lie algebras
obtained by the taking the chiral envelope of a $\Lie^\star$ algebra. In particular, we will rederive
the \cite{bd} computation of chiral homology of such chiral Lie algebras.

\medskip

In Section \ref{s:modules} we will extend this theorem to include a statement about chiral
modules for chiral Lie algebras.

\sssec{Nilpotence}

Let us comment on the pro-nilpotence condition for a tensor $\oo$-category $\cC$, and why it implies
that the functors \eqref{e:Koszul for Lie intro} equivalences in this case.

\medskip

At least conjecturally, one can modify both sides in \eqref{e:Koszul for Lie intro} to
turn it into an equivalence. Namely, one has to replace the $\oo$-category $\Lie\alg(\cC)$
by its full subcategory $\Lie\alg^{\on{nil}}(\cC)$ consisting of pro-nilpotent
Lie algebras. And one has to replace the $\oo$-category $\on{Com}\coalg(\cC)$ by its full
subcategory $\on{Com}\coalg^{\on{nil}}(\cC)$ consisting of ind-nilpotent commutative
coalgebras. We refer the reader to Section  \ref{ss:turning into equiv} for the
precise formulation of this conjecture. 

\medskip

The main feature of the pro-nilpotence condition on $\cC$ is that in this case the
inclusions
\begin{equation} \label{e:inclusions}
\Lie\alg^{\on{nil}}(\cC)\hookrightarrow \Lie\alg(\cC) \text{ and }
\on{Com}\coalg^{\on{nil}}(\cC)\hookrightarrow \on{Com}\coalg(\cC)
\end{equation}
are equivalences.

\medskip

However, unfortunately, in order to actually prove that \eqref{e:Koszul for Lie intro}
is an equivalence for $\cC=\fD(\ran X)$ we use more than just the above
mentioned fact about the inclusions \eqref{e:inclusions}: Our definition of pro-nilpotence
is quite stringent and explicitly specifies $\cC$ as an inverse limit of $\oo$-categories with vanishing
$n$-fold tensor products.

\ssec{$\oo$-categories}  \label{ss:oo conventions}

\sssec{}  \label{sss:oo-cat}

In this work, we study aspects of the homotopy theory of certain algebro-geometric structures. Classically, such as in the study of chain complexes, a notion of a homotopy theory is provided by the homotopy category, a category modulo some equivalence relation. This notion is very useful for a number of purposes, but it is insufficient for many others -- for instance, differential graded algebras should have a homotopy theory, but it cannot be extracted with any facility from the homotopy category of complexes. Another, richer, notion of a homotopy theory is provided by Quillen's theory model categories, a category equipped with specified types of morphisms: cofibrations, fibrations, and weak equivalences. Quillen's notion is powerful and sufficient for many purposes, but it, in some sense, has more structure than just the homotopy theory. If we were to allow an analogy with linear algebra, the homotopy/triangulated category is like the rank of a module, and a model category is like a module together with a choice of basis: The homotopy theory itself, like the module, is something in between. Further, working with bases can be very useful in algebra, but they only exist if the module is free, and some constructions are easier coordinate-free; similar is true in homotopy theory. 

\medskip

In the present work, this intermediate notion of homotopy theory will be that of an {\it $\oo$-category}. Intuitively, an $\oo$-category consists of the structure of objects, maps, homotopies between maps, homotopies between homotopies, and so forth. Such a structure is provided, for instance, by a category enriched in chain complexes or topological spaces. Topological and DG categories are simple to define, but suffer from technical drawbacks, and we instead use Joyal's {\it quasi-category} model for $\oo$-category theory, where this data is just a particular type of simplicial set, satisfying the weak Kan condition of Boardman-Vogt, \cite{bv}. This theory has been developed in great detail by Joyal in \cite{joyal} and Lurie in \cite{topos} and \cite{dag}, which will be our primary references. The key feature to make note of is that limits, colimits, and functors in the $\oo$-category setting correspond to homotopy limits, homotopy colimits, and derived functors in the setting of DG or model categories. It will be safe to replace the words ``$\oo$-category" by ``topological category" to obtain the intuitive sense of the results in this work, keeping this one proviso in mind.

\medskip

For further motivation for $\oo$-category theory, we refer to Section 2.1 of \cite{qcloops} and, more fundamentally, 
to the first chapter of \cite{topos}.

\sssec{Conventions regarding $\oo$-categories}

We shall use the following notation:

\medskip

\noindent By $\ooCat$ we shall denote the $(\oo,1)$-category of $\oo$-categories. By 
$\ooCat^{\on{st}}$ we shall denote the non-full subcategory of $\ooCat$ consisting
of stable categories and exact functors.

\medskip

\noindent By $\ooCat_{\on{pres}}\subset \ooCat$ we shall denote the full subcategory
consisting of presentable $\oo$-categories. By 
$$\ooCat_{\on{pres},L}\subset \ooCat_{\on{pres}}$$
we shall denote the non-full subcategory where we restrict functors to those commuting with
colimits. The adjoint functor theorem (Corollary 5.5.2.9 of \cite{topos}) says that a functor
between two objects of $\ooCat_{\on{pres}}$ preserves colimits, i.e., is a $1$-morphism in
$\ooCat_{\on{pres},L}$, if and only if it admits a right adjoint. 

\medskip

We let $\ooCat_{\on{pres}}^{\on{st}}$ be the full subcategory of $\ooCat^{\on{st}}$
equal to the preimage of $\ooCat_{\on{pres}}\subset \ooCat$
under the forgetful functor $\ooCat^{\on{st}}\to \ooCat$. We let
$\ooCat_{\on{pres},L}^{\on{st}}$ be the non-full subcategory of $\ooCat_{\on{pres}}^{\on{st}}$
equal to the preimage of $\ooCat_{\on{pres},L}\subset \ooCat_{\on{pres}}$ under the
above forgetful functor.

\medskip

We will also use the notation
$$\ooCat_{\on{pres},\on{cont}}^{\on{st}}:=\ooCat_{\on{pres},L}^{\on{st}},$$
and call its 1-morphisms continuous functors. An exact functor between two
stable presentable categories is continuous if and only if it commutes with
filtered colimits, or, equivalently, with arbitrary direct sums.

\medskip

Using \cite{dag}, Sect. 6.3.1 (and, specifically, Example 6.3.1.22), the category 
$\ooCat_{\on{pres},\on{cont}}^{\on{st}}$ is endowed with a symmetric monoidal
structure under tensor product. 

\medskip

When discussing a monoidal/symmetric monoidal structure on a stable presentable symmetric 
monoidal category, unless specified otherwise, we shall mean a structure of
associative/commutative algebra object in $\ooCat_{\on{pres},\on{cont}}^{\on{st}}$
with respect to the above symmetric monoidal structure on it.

\medskip

For a ground field $k$, we shall denote by $\Vect_k$ the commutative algebra object
of $\ooCat_{\on{pres},\on{cont}}^{\on{st}}$ given by the $\oo$-category associated
to the simplicial category of chain complexes of $k$-vector spaces.

\medskip

\noindent{\it Terminology:} We use the word ``equivalence" in reference to
a functor between $\oo$-categories. We will use the term ``homotopy equivalence"
in reference to a $1$-morphism inside a given $\oo$-category (the notion that for an ordinary
category would be translated as ``isomorphism").

\ssec{D-modules}

\sssec{The naive approach} \label{sss:Dmod}

Let $Y$ be a scheme of finite type. Assume for simplicity that $Y$ is separated. 
We can attach to it a stable $\oo$-category $\fD(Y)$. Namely,
we start with the abelian category $\fD(Y)^\heartsuit$. When $Y$ is smooth, this
is the abelian category of right D-modules over the ring of differential operators
on $Y$; for $Y$ singular one defines this category by locally embedding $Y$
into a smooth scheme and using Kasiwara's theorem (see also \cite{bd}, Sect. 2.1.3).

\medskip

To construct $\fD(Y)$, we consider the DG category of complexes
over $\fD(Y)^\heartsuit$, and following \cite{Dr}, form the DG quotient by the subcategory of acyclic complexes.
It is well-known that to any DG category one can canonically attach a simplicial category,
and $\fD(Y)$ is the $\oo$-category associated to this simplicial category. By construction, the
category $\fD(Y)$ is cocomplete and compactly generated; in particular, it is presentable.

\medskip

The question of functoriality $Y\mapsto \fD(Y)$ is less well understood. With some work
we can extend the above assignment to a functor
\begin{equation} \label{e:star version}
\Sch\longrightarrow \ooCat^{\on{st}}_{\on{pres,cont}},
\end{equation}
such that for a map $f:Y_1\to Y_2$ the resulting functor $\fD(Y_1)\to \fD(Y_2)$
at the level of homotopy categories is given by the D-module push-forward, denoted $f_*$.
We will denote the functor of \eqref{e:star version} by $\fD^*$.

\medskip

One can also extend the assignment $Y\squig \fD(Y)$ differently. Namely, one can construct
a functor
\begin{equation} \label{e:shriek version}
\Sch^{\on{op}}\longrightarrow \ooCat^{\on{st}}_{\on{pres,cont}},
\end{equation}
such that for a map $f:Y_1\to Y_2$ the resulting functor $\fD(Y_2)\to \fD(Y_1)$
at the level of homotopy categories is given by the D-module pullback, denoted $f^!$.
We will denote the functor of \eqref{e:shriek version} by $\fD^!$.

\medskip

However, for most applications that involve $\oo$-categories, considering the above
two functors $\fD^*$ and $\fD^!$ separately is not sufficient. Below we formulate a
version of the formalism of a ``theory of D-modules" which is sufficient for the 
applications that the authors are aware of. 

\begin{remark}
To the best of our knowledge, the construction of the theory of D-modules as formulated 
below does not have a reference in the literature, although many papers on the subject implicitly
assume its existence. We hope, however, that this theory will be documented soon.\footnote{The corresponding theory in a related context of ind-coherent sheaves has been
developed in \cite{IndCoh}.}
\end{remark}

\sssec{The theory of D-modules}

Let $\Sch^{\on{corr}}$ denote the $(1,1)$-category whose objects are schemes of finite type, and morphisms
are correspondences, i.e., for $Y_1,Y_2\in \Sch^{\on{corr}}$, then $\Hom_{\Sch^{\on{corr}}}(Y_1,Y_2)$
is the \emph{groupoid} of diagrams, an element $f$ in which is of the form
\begin{equation} \label{e:corr}
Y_1\overset{f^l}\longleftarrow Z\overset{f^r}\longrightarrow Y_2,
\end{equation}
where maps in this groupoid are defined naturally. For a correspondence as in \eqref{e:corr} we shall
symbolically denote by $(f^l,Z,f^r)$ the corresponding morphism in $\Sch^{\on{corr}}$.

\medskip

The composition of morphisms in $\Sch^{\on{corr}}$
is defined naturally by forming Cartesian products. The unit morphism $Y\to Y$ is one where the maps
$f^l$ and $f^r$ are both isomorphisms. The category $\Sch^{\on{corr}}$ has a natural symmetric monoidal
structure given by products.

\medskip

The category $\Sch^{\on{corr}}$ contains a non-full subcategory denoted $\Sch^*$, equivalent to the usual category
$\Sch$, which has the same objects, but where the morphisms are restricted to have $f^l$ an isomorphism.
We have another non-full subcategory $\Sch^!\subset \Sch^{\on{corr}}$, equivalent to $\Sch^{\on{op}}$, 
which also has the same objects, but where the morphisms are restricted to have $f^r$ an isomorphism. 

\medskip

We assume ``the theory of D-modules" in the following format: We assume having a symmetric monoidal
functor
\begin{equation} \label{e:category of Dmodules}
\fD^{\blackdiamond}:\Sch^{\on{corr}}\to \ooCat^{\on{st}}_{\on{pres,cont}},
\end{equation}
whose value on a scheme $Y$ is the $\oo$-category $\fD(Y)$,
and for a morphism as in \eqref{e:corr} the functor $\fD(Y_1)\to \fD(Y_2)$ is given by $$f\blackdiamond:=(f^r)_*\circ (f^l)^!.$$

\begin{remark}
Modulo homotopy theory, the content of the functor \eqref{e:category of Dmodules} is the
base change theorem: For a Cartesian square in $\Sch$
$$
\CD
Y'  @>{g_Y}>>  Y \\
@V{\pi'}VV   @VV{\pi}V  \\
X'   @>{g_X}>>  X
\endCD
$$
we have a canonical homotopy equivalence $g_X^!\circ \pi_*\simeq \pi'_*\circ g_Y^!$.
\end{remark}

\medskip

Restricting the functor $\fD^\blackdiamond$ to the subcategories $\Sch^*$ and $\Sch^!$, we obtain 
symmetric monoidal functors $\fD^*$ and $\fD^!$ of \eqref{e:star version} and \eqref{e:shriek version},
respectively.

\sssec{}

Let us observe that the theory of D-modules given by \eqref{e:category of Dmodules} encodes
also the standard adjunctions:

\medskip

It follows from the definitions that if $g:Y\to X$ is a locally closed embedding, we have 
a natural homotopy equivalence in $\Hom_{\Sch^{\on{corr}}}(Y,Y)$
$$(\on{id},Y,g)\circ (g,Y,\on{id})\simeq \on{id}_Y,$$
inducing the homotopy equivalence of functors
$$g^!\circ g_*\simeq \on{Id}_{\fD(Y)}.$$

\medskip

If $g$ is a closed embedding $g=\imath$, then the resulting map $\on{Id}_{\fD(Y)}\to \imath^!\circ \imath_*$
is the unit of the $(\imath_*,\imath^!)$ adjunction. 

\medskip

If $g$ is an open embedding $g=\jmath$, then the resulting map $\jmath^!\circ \jmath_*\to \on{Id}_{\fD(Y)}$
is the counit of the $(\jmath^!,\jmath_*)$ adjunction. 

\medskip

\noindent{\it Note on notation:} To be consistent with the notation from \cite{bd}, for
an open embedding $\jmath$, we will often write $\jmath^*$ instead of $\jmath^!$.

\medskip

Thus, the restriction of $\fD^*$ to the non-full subcategory of $\Sch^{\on{open}}\subset \Sch$
with the same objects but open embeddings as morphisms, is a functor
$$\Sch^{\on{open}}\to \ooCat^{\on{st}}_{\on{pres,cont}},$$
obtained from $\fD^!|_{(\Sch^{\on{open}})^{\on{op}}}$ by taking right adjoints. 

\sssec{}

Let now $g:Y\to X$ be an arbitrary separated map, and let $\Delta(Y/X)$ be the diagonal 
$$Y\to Y\underset{X}\times Y.$$ 
From the $(\Delta(Y/X)_*,\Delta(Y/X)^!)$ adjunction above, we obtain a canonical map
$$\on{Id}_{\fD(Y)}\to g^!\circ g_*.$$
When $g$ is proper this map is a unit for the $(g_*,g^!)$ adjunction. 

\medskip

Thus, the restriction of $\fD_*$ to the non-full subcategory of $\Sch^{\on{proper}}\subset \Sch$
with the same objects but proper maps as morphisms, is a functor 
$$\Sch^{\on{proper}}\to \ooCat^{\on{st}}_{\on{pres,cont}},$$
obtained from $\fD^!|_{(\Sch^{\on{proper}})^{\on{op}}}$ by taking left adjoints. 

\sssec{}  \label{sss:! product}

For future use let us note that the functor $$\fD^!:\Sch^{\op}\to \ooCat^{\on{st}}_{\on{pres,cont}}$$
naturally factors through the $(\oo,1)$-category of commutative algebras in 
$\ooCat^{\on{st}}_{\on{pres,cont}}$. Indeed, this structure is induced by the canonical
coalgebra structure on every $Y\in \Sch$ given by the diagonal map.

\ssec{Acknowledgements}  

This work originated in the course of Mike Hopkins' seminar on chiral algebras at Harvard during the fall of 2007, and we thank all the seminar 
participants for their collaboration and fellowship. We especially thank Jacob Lurie for many conversations, during which many of the present 
ideas were jointly conceived, and for numerous explanations on various topics addressed in this paper.\footnote{Unfortunately, Jacob has declined to sign this work as a coauthor.} We warmly thank Sasha Beilinson for conversations on chiral algebras and for his generous encouragement on first hearing of this work. DG thanks Nick Rozenblyum for many helpful discussions.  JF thanks Reimundo Heluani 
for first explaining chiral algebras to him as a first-year graduate student, which led him to \cite{bd} and the comments of 3.3.13 therein.
Finally, we wish to thank the anonymous referee, whose remarks have helped to improve the paper.

\smallskip

JF is supported by an NSF postdoctoral fellowship. JF's travel to Cambridge, during which a draft of this paper was written, was supported by the Midwest Topology Network. DG is supported by NSF grant DMS - 0600903.

\section{Chiral algebras and factorization coalgebras}

\ssec{D-modules on the Ran space}  \label{ss:defn of Ran}

\sssec{}

Let $\fset^{\rm surj}$ denote the category of non-empty finite sets and surjective morphisms. Let $X^{\fset^{\rm surj}}$
denote the functor $(\fset^{\rm surj})^{\on{op}}\to \Sch$ given by $I\squig X^I$. By composing with the
functor $$\fD^!:(\Sch)^{\on{op}}\longrightarrow \ooCat^{\on{st}}_{\on{pres},\on{cont}}$$
we obtain a functor
\begin{equation} \label{e:Ran diag}
\fD^!(X^{\fset^{\rm surj}}):\fset^{\rm surj} \longrightarrow \ooCat^{\on{st}}_{\on{pres},\on{cont}}.
\end{equation}

\medskip

\begin{definition} \label{d:ran}
The $\oo$-category $\fD(\ran X)$ is the limit of the functor in \eqref{e:Ran diag} in $\ooCat^{\on{st}}_{\on{pres},\on{cont}}$.
\end{definition}

\medskip

For a finite set $I$, we will denote by $(\Delta^I)^!$ the tautological functor 
$\fD(\ran X)\to \fD(X^I)$
corresponding to evaluation on $I$. For $I=\on{pt}$, we shall denote
$(\Delta^I)^!$ also by $(\Delta^{\on{main}})^!$. 

\sssec{}

Let us recall the following general paradigm. Let $K$ be a small category, and let $$\Phi:K\to \ooCat^{\on{st}}_{\on{pres}}$$
be a functor. Assume that for every arrow $\alpha:k_1\to k_2$ in $K$, the corresponding functor
$$\Phi_{\alpha}:\Phi_{k_1}\to \Phi_{k_2}$$
admits a left adjoint (which is automatically a $1$-morphism in $\ooCat^{\on{st}}_{\on{pres},\on{cont}}$).

\medskip

Then we can extend the assignment $$i\squig \Phi_i,\,\,(\alpha:k_1\to k_2)\squig (\Phi_\alpha)^{\on{L}}$$
to a functor $\Phi^{\on{L}}:K^{\on{op}}\to \ooCat^{\on{st}}_{\on{pres},\on{cont}}$. Moreover, we have a canonical equivalence
(see e.g., \cite{GL:DG}, Lemma 1.3.3):
\begin{equation} \label{e:limit and colimit}
\underset{K}{\lim}\, \Phi \simeq \underset{K^{\on{op}}}{\colim}\, \Phi^{\on{L}},
\end{equation}
where the colimit is taken in the $(\oo,1)$-category $\ooCat^{\on{st}}_{\on{pres},\on{cont}}$.

\begin{remark} 
Note that the forgetful functor
$$\ooCat^{\on{st}}_{\on{pres},\on{cont}}\to \ooCat^{\on{st}}$$ commutes
with limits, but not with colimits. So, whereas the $\oo$-category in the left-hand side in \eqref{e:limit and colimit}
can be calculated in either $\ooCat^{\on{st}}_{\on{pres},\on{cont}}$ or $\ooCat^{\on{st}}$, it is
crucial that the right-hand side is calculated in $\ooCat^{\on{st}}_{\on{pres},\on{cont}}$.
\end{remark}

\sssec{}

Applying \eqref{e:limit and colimit} to $K=\fset^{\rm surj}$ and $\Phi=\fD^!(X^{\fset^{\rm surj}})$, 
we obtain that $\fD(\ran X)$ can be written as a colimit as follows:
\begin{equation} \label{e:Ran as colimit}
\underset{(\fset^{\rm surj})^{\on{op}}}{\colim}\, \fD^*(X^{\fset^{\rm surj}}).
\end{equation}

Here $\fD^*(X^{\fset^{\rm surj}})$ is the functor $(\fset^{\rm surj})^{\on{op}}\to \ooCat^{\on{st}}_{\on{pres},\on{cont}}$
equal to the composition
$$(\fset^{\rm surj})^{\on{op}} \overset{X^{\fset^{\rm surj}}}\longrightarrow \Sch \overset{\fD^*}\longrightarrow 
\ooCat^{\on{st}}_{\on{pres},\on{cont}}.$$

\medskip

For a finite set $I$, we will denote by $(\Delta^I)_*$ the tautological functor $\fD(X^I)\to \fD(\ran X)$.
By construction, this functor is the left adjoint of $(\Delta^I)^!$. 

\medskip

For $I=\on{pt}$, we will denote
$(\Delta^I)_*$ also by $(\Delta^{\on{main}})_*$. The following is straightforward:

\begin{lemma}
The adjunction $\on{Id}\to (\Delta^{\on{main}})^!\circ (\Delta^{\on{main}})_*$ is a homotopy equivalence.
\end{lemma}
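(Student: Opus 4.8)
The plan is to unwind the definition of $\fD(\ran X)$ as the colimit \eqref{e:Ran as colimit} and show that the single vertex $I = \on{pt}$ already carries the full composite $(\Delta^{\on{main}})^! \circ (\Delta^{\on{main}})_*$ as the identity. Concretely, $(\Delta^{\on{main}})_* : \fD(X) \to \fD(\ran X)$ is by construction the structure map into the colimit $\underset{(\fset^{\rm surj})^{\on{op}}}{\colim}\, \fD^*(X^{\fset^{\rm surj}})$ associated to the object $\on{pt} \in (\fset^{\rm surj})^{\op}$, and $(\Delta^{\on{main}})^!$ is its right adjoint. So the claim is about computing the composite of a colimit coprojection with its right adjoint, for the particular diagram at hand.

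First I would use the fact that, by Definition \ref{d:ran} and the paradigm \eqref{e:limit and colimit}, the right adjoint $(\Delta^{\on{main}})^!$ is precisely the functor $\fD(\ran X) = \underset{\fset^{\rm surj}}{\lim}\, \fD^!(X^{\fset^{\rm surj}}) \to \fD(X^{\on{pt}}) = \fD(X)$ given by evaluation on the singleton. Hence for $M \in \fD(X)$, the object $(\Delta^{\on{main}})^! (\Delta^{\on{main}})_* (M)$ is the $I=\on{pt}$ component of the object of $\fD(\ran X)$ freely generated (i.e.\ left-Kan-extended) from $M$ sitting at the vertex $\on{pt}$. Explicitly, this component is a colimit over the comma category of maps in $\fset^{\rm surj}$ out of (or into, depending on variance) $\on{pt}$, of the transition functors applied to $M$. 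The key observation is that $\on{pt}$ is \emph{initial} in $\fset^{\rm surj}$: every non-empty finite set $I$ admits a unique surjection $I \twoheadrightarrow \on{pt}$, and there are no surjections $\on{pt} \twoheadrightarrow I$ for $|I|>1$. Correspondingly, in $(\fset^{\rm surj})^{\op}$ the object $\on{pt}$ is \emph{terminal}.

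Now I would invoke the standard fact that when one left-Kan-extends along the inclusion of a terminal object of the indexing category and then restricts back to that same object, one recovers the identity: the relevant comma category $(\on{pt} \downarrow (\fset^{\rm surj})^{\op})$ — or rather the slice controlling the value of the colimit coprojection followed by evaluation at $\on{pt}$ — has an initial object, namely $\on{id}_{\on{pt}}$, so the colimit computing the $\on{pt}$-component collapses to the single term $\fD^*(\Delta^{\on{id}})(M) = M$, with the transition functor being the identity. Chasing that the unit map $\on{Id} \to (\Delta^{\on{main}})^! \circ (\Delta^{\on{main}})_*$ agrees with this identification (not merely that source and target are abstractly equivalent) is a matter of tracing the unit of the adjunction through \eqref{e:limit and colimit}; this is the one bookkeeping point that needs care.

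I expect the main obstacle to be purely a matter of making the $\oo$-categorical colimit argument rigorous rather than conceptual: one must be sure that the coprojection $\fD^*(X^{\on{pt}}) \to \colim$ followed by its right adjoint is computed by the expected comma-category colimit (this uses that all transition functors $(\Phi_\alpha)^{\on{L}} = (\Delta^\pi)_*$ lie in $\ooCat^{\on{st}}_{\on{pres},\on{cont}}$, so the colimit in \eqref{e:Ran as colimit} is genuinely computed there and coprojections have the expected right adjoints), and that $\on{id}_{\on{pt}}$ is initial in the relevant comma category — an elementary but genuine check in $\fset^{\rm surj}$. Once that is in place, the statement follows formally, with no input specific to D-modules beyond the formal properties of $\fD^!$ and $\fD^*$ already recorded in the excerpt. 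An alternative, perhaps cleaner, route avoiding Kan extensions altogether: show directly that $(\Delta^{\on{main}})^!$ admits $(\Delta^{\on{main}})_*$ as a \emph{fully faithful} left adjoint by exhibiting $\fD(X)$ as a retract — but the colimit-over-a-category-with-terminal-object argument seems the most economical.
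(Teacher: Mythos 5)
The overall strategy---identifying $(\Delta^{\on{main}})_*$ with the insertion at the vertex $\on{pt}$ of the colimit \eqref{e:Ran as colimit} and $(\Delta^{\on{main}})^!$ with the evaluation at $\on{pt}$ in the limit presentation---is a reasonable starting point (the paper itself offers no proof, calling the lemma straightforward). But the central step of your argument, namely that the composite $\on{ev}_{\on{pt}}\circ \on{ins}_{\on{pt}}$ collapses to the identity for purely formal comma-category reasons, ``with no input specific to D-modules,'' is not correct. For a colimit of categories taken along left adjoints, the insertion at a vertex $k$ followed by its right adjoint is \emph{not} the identity even when $k$ is initial in the indexing category: in the minimal example $K=\{0\to 1\}$ with diagram $F:\cC_0\to \cC_1$, the colimit is $\cC_1$, the insertion at the initial vertex $0$ is $F$, its right adjoint is $F^{\on{R}}$, and the composite is $F^{\on{R}}\circ F$, which is the identity only when $F$ is fully faithful. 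In general $\on{ev}_{\on{pt}}\circ \on{ins}_{\on{pt}}$ is computed as a colimit of the endofunctors $\Delta(\pi_I)^!\circ \Delta(\pi_I)_*$ (where $\pi_I:I\twoheadrightarrow \on{pt}$ is the unique surjection) over a weakly contractible index category, and the single term at $I=\on{pt}$ is not cofinal in that diagram: your comma category has an \emph{initial} object, and colimits collapse onto \emph{terminal} objects, not initial ones. (Note also that your labels are reversed: the justification you give shows $\on{pt}$ is terminal in $\fset^{\rm surj}$, hence initial in $(\fset^{\rm surj})^{\on{op}}$.)

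The missing ingredient is exactly the geometric input you disclaim: each diagonal $\Delta(\pi_I):X\to X^I$ is a closed embedding, so the adjunction package encoded by $\fD^{\blackdiamond}$ gives $\Delta(\pi_I)^!\circ \Delta(\pi_I)_*\simeq \on{Id}_{\fD(X)}$. With this, every term of the diagram computing $\on{ev}_{\on{pt}}\circ \on{ins}_{\on{pt}}$ is canonically the identity, the diagram is essentially constant, and its colimit over the weakly contractible index category is $\on{Id}$. A more economical route avoiding the colimit formula altogether: check directly that the family $I\squig \Delta(\pi_I)_*(N)$ defines an object of the limit $\fD(\ran X)$ (this uses base change together with $\Delta(\pi)^!\Delta(\pi)_*\simeq \on{Id}$ for the diagonal closed embeddings), verify that $N\squig \{\Delta(\pi_I)_*(N)\}_I$ is left adjoint to evaluation at $\on{pt}$, and read off that its $\on{pt}$-component is $N$ itself. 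Without the closed-embedding input the statement is false for an abstract diagram of categories, so your argument as written cannot be completed.
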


\begin{cor}
The functor $(\Delta^{\on{main}})_*:\fD(X)\to \fD(\ran X)$ is fully faithful.
\end{cor}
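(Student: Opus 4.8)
The plan is to deduce the statement immediately from the preceding Lemma together with the standard characterization of fully faithful left adjoints. Recall that a functor $F\colon\cC\to\cD$ of $\oo$-categories which admits a right adjoint $G$ is fully faithful precisely when the unit natural transformation $\on{Id}_\cC\to G\circ F$ is an equivalence (see \cite{topos}, Proposition 5.2.7.4). By construction $(\Delta^{\on{main}})_*$ is the left adjoint of $(\Delta^{\on{main}})^!$, so it suffices to check that the unit $\on{Id}_{\fD(X)}\to (\Delta^{\on{main}})^!\circ(\Delta^{\on{main}})_*$ is a homotopy equivalence; but this is exactly the assertion of the Lemma.

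Unwinding, for $M,N\in\fD(X)$ the adjunction furnishes a homotopy equivalence
$$\Map_{\fD(\ran X)}\bigl((\Delta^{\on{main}})_*M,\,(\Delta^{\on{main}})_*N\bigr)\;\simeq\;\Map_{\fD(X)}\bigl(M,\,(\Delta^{\on{main}})^!(\Delta^{\on{main}})_*N\bigr),$$
and under this equivalence the map induced by the functor $(\Delta^{\on{main}})_*$ is identified with postcomposition by the unit $N\to (\Delta^{\on{main}})^!(\Delta^{\on{main}})_*N$. Since the Lemma says this unit is a homotopy equivalence, $(\Delta^{\on{main}})_*$ induces equivalences on all mapping spaces, i.e.\ is fully faithful.

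There is essentially no genuine obstacle here; the only points requiring any care are that the facts about adjoint functors and mapping spaces be invoked in their $\oo$-categorical form rather than in their $1$-categorical shadow, and that the unit appearing in the Lemma be recognized as the unit of the $\bigl((\Delta^{\on{main}})_*,(\Delta^{\on{main}})^!\bigr)$ adjunction — which it is, since that adjunction is precisely how $(\Delta^{\on{main}})_*$ was produced as a left adjoint of $(\Delta^{\on{main}})^!$ via \eqref{e:limit and colimit}.
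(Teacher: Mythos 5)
Your proof is correct and is exactly the argument the paper intends: the corollary is stated as an immediate consequence of the preceding Lemma, via the standard fact that a left adjoint is fully faithful if and only if the unit of the adjunction is an equivalence. No divergence from the paper's (implicit) reasoning.
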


\ssec{Symmetric monoidal structures on $\fD(\ran X)$}  \label{ss:defn of conv}

We shall now recall the definition of the $\star$ and chiral symmetric monoidal structures
on $\fD(\ran X)$, borrowed from \cite{bd}, Sect. 3.4.10. 

\medskip

We shall first give a definition based on the formalism of the theory of D-modules 
formulated in Section  \ref{sss:Dmod}. We shall subsequently write it down more
concretely as functors 
\begin{equation} \label{e:J-fold tensor}
\fD(\ran X)^{\otimes J}\to \fD(\ran X)
\end{equation}
for every finite set $J$.

\medskip

Both versions of the definition may be difficult to parse. We refer the reader to Section  \ref{sss:top interp}
where this definition is reinterpreted in the context of sheaves on a topological space, which makes
it more transparent.

\sssec{}

Let us recall the following general paradigm. Let $K$ be a small symmetric monoidal category, and let
$\Psi:K\to \cA$ be a \emph{right lax} symmetric monoidal functor, where $\cA$ is another symmetric 
monoidal category closed under colimits. Then 
$$\underset{K}{\colim}\, \Psi\in \cA$$
is a commutative algebra object in $\cA$.

\sssec{}

We shall apply this to $K:=(\fset^{\rm surj})^{\on{op}}$ and $\cA:=\ooCat^{\on{st}}_{\on{pres},\on{cont}}$,
where $(\fset^{\rm surj})^{\on{op}}$ is viewed as a symmetric monoidal category via the operation of
disjoint union.

\medskip

The functor $\Psi$
will be the composition of $\fD\blackdiamond:\Sch^{\on{corr}}\to \ooCat^{\on{st}}_{\on{pres},\on{cont}}$,
preceded by either of two right lax symmetric monoidal functors:
$$(X^{\fset^{\rm surj}})^\star \text{ and }(X^{\fset^{\rm surj}})^{\ch}:(\fset^{\rm surj})^{\on{op}}\to \Sch^{\on{corr}}.$$

\medskip

We let the functor $(X^{\fset^{\rm surj}})^\star$ be the functor
$$X^{\fset^{\rm surj}}:(\fset^{\rm surj})^{\on{op}}\to \Sch\simeq \Sch^*\hookrightarrow \Sch^{\on{corr}},$$
equipped with a natural symmetric monoidal structure. Note that this functor is not only right lax
monoidal, but actually monoidal.

\sssec{}   \label{sss:defn of monoidal}

The functor $(X^{\fset^{\rm surj}})^{\ch}$ is defined as follows. As a functor
$(\fset^{\rm surj})^{\on{op}}\to \Sch^{\on{corr}}$, it equals $X^{\fset^{\rm surj}}$. However,
the lax symmetric monoidal structure is different:

\medskip

Let $I_J$ be a collection of finite sets, parameterized by another finite set $J$: $j\squig I_j$,
which we can also think of as a surjection
$$\underset{j\in J}\sqcup\, I_j=:I\overset{\pi}\twoheadrightarrow J.$$

Let $U(\pi)$ be the open subset of $X^I$ equal to the locus
$$\{i\squig x_i\in X,\,\,i\in I\,\,|\,\, x_{i_1}\neq x_{i_2} \text{ if } \pi(i_1)\neq \pi(i_2)\},$$
and let
$$\jmath(\pi):U(\pi)\hookrightarrow X^I$$
denote the corresponding open embedding.

\medskip

We define the right lax symmetric monoidal structure on $(X^{\fset^{\rm surj}})^{\ch}$ by letting the arrow
$$\underset{j\in J}\Pi\, X^{I_j}\to X^I\in \Sch^{\on{corr}}$$
be given by the correspondence
$$\underset{j\in J}\Pi\, X^{I_j} \overset{\jmath(\pi)}\longleftarrow U(\pi)
\overset{\jmath(\pi)}\longrightarrow X^I.$$

\ssec{Explicit description of tensor product functors} \label{ss:defn of conv expl}

\sssec{}  

Using the presentation of $\fD(\ran X)$ as a colimit given by \eqref{e:Ran as colimit}, in order 
to define a functor as in \eqref{e:J-fold tensor}, it suffices to define a functor 
$$m_J:
\underset{J}{\underbrace{(\fset^{\rm surj})^{\on{op}}\times...\times  (\fset^{\rm surj})^{\on{op}}}}\longrightarrow (\fset^{\rm surj})^{\on{op}}$$
and a natural transformation between the resulting two functors
$$\underset{J}{\underbrace{(\fset^{\rm surj})^{\on{op}}\times...\times  (\fset^{\rm surj})^{\on{op}}}}\rightrightarrows
\ooCat^{\on{st}}_{\on{pres},\on{cont}}:$$
\begin{equation} \label{e:nat transf for monoidal}
\left(I_J\squig \underset{j\in J}\otimes \fD(X^{I_j})\right)\Rightarrow \left(I_J\squig \fD(X^{m_J(I_J)})\right),
\end{equation}
where we denote by $I_J$ an object of 
$\underset{J}{\underbrace{(\fset^{\rm surj})^{\on{op}}\times...\times  (\fset^{\rm surj})^{\on{op}}}}$
as in Section  \ref{sss:defn of monoidal}.

\medskip

For both monoidal structures, we let $m_J$ to be the functor of disjoint union:
$$I_J\mapsto I:=\underset{j\in J}\sqcup\, I_j.$$ 

\sssec{}

For the $\star$ symmetric monoidal structure, denoted symbolically $\otimes^\star$, we let the natural transformation
of \eqref{e:nat transf for monoidal} to be the external tensor product:
$$\Bigl(M^{I_j}\in \fD(X^{I_j})\Bigr)\squig \left(\underset{j}\boxtimes \,M^{I_j}\in \fD(X^I)\right).$$

\medskip

Note that for objects $M_j\in \fD(\ran X)$, $j\in J$ and a finite set $I$ equipped with a surjection
$\pi:I\twoheadrightarrow J$, there exists a canonical map
\begin{equation} \label{e:approx star product}
\underset{j\in J}\boxtimes\, \Bigl((\Delta^{I_j})^!(M_j)\Bigr)\longrightarrow (\Delta^I)^!\left(\underset{j\in J}{\otimes^{\star}}\, M_j\right).
\end{equation}

\sssec{}

For the chiral symmetric monoidal structure, denoted symboliccally $\otimes^{\ch}$,
we define the natural transformation \eqref{e:nat transf for monoidal} as
$$\left(M^{I_j}\in \fD(X^{I_j})\right)\squig \left(\jmath(\pi)_*\circ \jmath(\pi)^*\bigl(\underset{j}\boxtimes \,M^{I_j}\bigr)\in \fD(X^I)\right).$$

\medskip

Note that for objects $M_j\in \fD(\ran X)$, $j\in J$ and a finite set $I$ equipped with a surjection
$\pi:I\twoheadrightarrow J$, there exists a canonical map
\begin{equation} \label{e:approx ch product}
\jmath(\pi)_*\circ \jmath(\pi)^*\left(\underset{j\in J}\boxtimes\, (\Delta^{I_j})^!(M_j)\right)
\longrightarrow (\Delta^I)^!\left(\underset{j\in J}{\otimes^{\ch}}\, M_j\right).
\end{equation}

The following assertion results from the definitions:

\begin{lemma}  \label{l:ch product on open}
For $M_j\in \fD(\ran X)$, $j\in J$ and $I$ as above,
the resulting map
$$\underset{\pi}\oplus\, \jmath(\pi)_*\circ \jmath(\pi)^*\left(\underset{j\in J}\boxtimes\, (\Delta^{I_j})^!(M_j)\right)
\longrightarrow 
(\Delta^I)^!\left(\underset{j\in J}{\otimes^{\ch}}\, M_j\right).$$
is a homotopy equivalence, where the direct sum is taken over all surjections $\pi:I\twoheadrightarrow J$. 
\end{lemma}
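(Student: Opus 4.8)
The plan is to unwind both sides of the claimed map through the defining colimit presentation of $\fD(\ran X)$ and reduce the statement to a base-change computation on the finite-dimensional schemes $X^I$.

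First I would recall that by \eqref{e:Ran as colimit} the functor $(\Delta^I)^!:\fD(\ran X)\to \fD(X^I)$ has as left adjoint $(\Delta^I)_*$, and that an object $M\in\fD(\ran X)$ is determined by the compatible system $\{(\Delta^{I'})^!(M)\}$. The chiral tensor product $\underset{j}{\otimes^{\ch}}M_j$ is, by the construction of Section \ref{ss:defn of conv} together with the explicit description of Section \ref{ss:defn of conv expl}, obtained from the lax monoidal functor $(X^{\fset^{\rm surj}})^{\ch}$ by passing to the colimit; concretely, $(\Delta^I)^!$ applied to it is computed by summing, over the indexing of the colimit, the contributions $\jmath(\pi')_*\circ\jmath(\pi')^*\bigl(\boxtimes_j (\Delta^{I'_j})^!(M_j)\bigr)$ corresponding to the ways $I$ receives a map from a disjoint union $I' = \sqcup_j I'_j$ compatibly with $\pi'\twoheadrightarrow J$. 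The canonical map \eqref{e:approx ch product} is the inclusion of the single summand indexed by $I'_j = I_j$ (i.e.\ the tautological one), and the assertion of the lemma is that assembling these over all surjections $\pi:I\twoheadrightarrow J$ — equivalently, over all the relevant diagonal embeddings $X^I\hookrightarrow X^{I'}$ — produces an equivalence.

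The key steps, in order: (1) write out $(\Delta^I)^!\bigl(\underset{j}{\otimes^{\ch}}M_j\bigr)$ using the colimit formula \eqref{e:limit and colimit}–\eqref{e:Ran as colimit} and the right lax monoidal structure from Section \ref{sss:defn of monoidal}; the indexing category for the colimit that computes the $I$-th level is (a slice of) $(\fset^{\rm surj})^{\op}$, and one checks this colimit is in fact a \emph{direct sum} indexed by the surjections $\pi$ — this uses that $\fD$ is stable and that the transition maps in the relevant cofinal diagram are, after applying $\fD^*$, of the form $(\Delta^\sigma)_*$ for diagonal embeddings $\sigma$, so the colimit splits. (2) Identify each summand: for a surjection $\pi:I\twoheadrightarrow J$ one uses base change (the content of the functor $\fD^\blackdiamond$, i.e.\ the square in the Remark after \eqref{e:category of Dmodules}) applied to the Cartesian diagram relating $U(\pi)\subset X^I$ to the product $\prod_j X^{I_j}$ via the diagonal $X^I\to \prod_j X^{I_j}$, to rewrite the contribution of $\pi$ as $\jmath(\pi)_*\circ\jmath(\pi)^*\bigl(\boxtimes_j (\Delta^{I_j})^!(M_j)\bigr)$, matching the summand on the left-hand side. (3) Check that the resulting total map is exactly \eqref{e:approx ch product} on each summand and that the source, $\underset{\pi}\oplus\,\jmath(\pi)_*\circ\jmath(\pi)^*(\dots)$, is a legitimate object of $\fD(X^I)$ (finite direct sum, so no convergence issue). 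This gives the homotopy equivalence.

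The main obstacle I expect is step (1): justifying that the relevant colimit computing the $I$-th level of $\underset{j}{\otimes^{\ch}}M_j$ is cofinally a \emph{discrete} sum over the set of surjections $\pi:I\twoheadrightarrow J$, rather than a genuine homotopy colimit over a nontrivial category. This requires a careful analysis of the slice category appearing in \eqref{e:Ran as colimit} after it is fed through the lax monoidal structure of $(X^{\fset^{\rm surj}})^{\ch}$: one must see that the only way to factor the diagonal $X^I\to\prod_j X^{I_j}$ through a product $\prod_j X^{I'_j}$ compatibly is via surjections $I'_j\twoheadrightarrow I_j$, that the open-restriction functors $\jmath(\pi)_*\jmath(\pi)^*$ kill all the ``non-diagonal'' overlap between different $\pi$'s (so the Mayer–Vietoris-type terms vanish), and hence the homotopy colimit degenerates to the direct sum indexed by $\pi$. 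Everything after that is a formal consequence of base change and the adjunctions recorded in Section \ref{sss:Dmod}. Since the excerpt says this ``results from the definitions,'' in the write-up I would present steps (1)–(3) as an unwinding, emphasizing the cofinality/degeneration point and citing base change for step (2).
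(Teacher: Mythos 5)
The paper offers no proof of this lemma beyond the remark that it ``results from the definitions,'' and your unwinding --- compute $(\Delta^I)^!$ of the defining colimit componentwise, invoke base change for $\fD^\blackdiamond$ to identify each contribution, and observe that the functors $\jmath(\pi)_*\circ\jmath(\pi)^*$ kill the overlap terms so that the indexing diagram collapses to a discrete sum over surjections $\pi:I\twoheadrightarrow J$ --- is exactly the intended argument. Your proposal is correct and takes essentially the same (definitional) route as the paper, with the one point genuinely needing care (the degeneration of the homotopy colimit to a direct sum) correctly identified and correctly attributed to the vanishing of the cross terms after open restriction.
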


\ssec{Chiral Lie algebras and factorization coalgebras}  \label{ss:chiral and factorization algs}

We now define the $\oo$-categories which will be our primary objects of study.

\begin{definition} We let $\Lie\alg^{\ch}(\ran X)$ and $\Lie\alg^{\star}(\ran X)$
be the $\oo$-categories of Lie algebras in the $\oo$-category $\fD(\ran X)$ equipped with the chiral 
and $\star$ symmetric monoidal structure, respectively.
\end{definition}

\begin{definition}
The $\oo$-category of chiral Lie and $\star$-Lie algebras on $X$ are the full $\oo$-subcategories 
$$\Lie\alg^{\ch}(X)\subset \Lie\alg^{\ch}(\ran X) \text{ and } \Lie\alg^{\star}(X)\subset \Lie\alg^{\star}(\ran X),$$
respectively, spanned by objects 
for which the underlying D-module is supported on $X$, i.e., it lies in the essential image of the functor 
$(\Delta^{\on{main}})_*: \fD(X) \ra \fD(\ran X)$.
\end{definition}

\begin{remark}
Our names for the above objects are slightly different from those in \cite{bd}: What they call
a ``chiral algebra" we call a ``chiral Lie algebra on $X$"; what they call a ``$\Lie^\star$-algebra"
we call a ``$\star$-Lie algebra on $X$."
\end{remark}

\begin{remark}
Throughout this text we will be working with \emph{non-unital} chiral Lie algebras. The precise
relation between non-unital chiral Lie algebras and unital ones will be discussed in another
publication. See also Remark \ref{r:unital chiral homology}.
\end{remark}

\medskip

On the coalgebraic side, we have:

\begin{definition} $\on{Com}\coalg^{\ch}(\ran X)$ is the $\oo$-category of (nonunital) chiral commutative coalgebras for 
the chiral monoidal structure on $\fD(\ran X)$. 
\end{definition}

\sssec{Factorization}

Let $\pi:I\twoheadrightarrow J$ be a surjection of finite sets. If $B\in \on{Com}\coalg^{\ch}(\ran X)$, from 
Lemma \ref{l:ch product on open} we obtain a map
$$(\Delta^{J})^!(B)\longrightarrow \jmath(\pi)_*\circ \jmath(\pi)^*\left(\underset{j\in J}\boxtimes\, (\Delta^{I_j})^!(B)\right)$$
and by adjunction a map 
\begin{equation} \label{e:factorization map}
\jmath(\pi)^*\Bigl((\Delta^J)^!(B)\Bigr)\longrightarrow \jmath(\pi)^*\left(\underset{j\in J}\boxtimes\, (\Delta^{I_j})^!(B)\right).
\end{equation}

\begin{definition} We say that $B\in \on{Com}\coalg^{\ch}(\ran X)$ is a factorization coalgebra if the maps \eqref{e:factorization map}
are homotopy equivalences for all $I$ and $\pi$.
\end{definition}

We define $\on{Com}\coalg^{\ch}_{\on{Fact}}(\ran X)$ to be the full subcategory of 
$\on{Com}\coalg^{\ch}(\ran X)$ spanned by factorization coalgebras. We shall also use the notation
$$\Fact(X):=\on{Com}\coalg^{\ch}_{\on{Fact}}(\ran X).$$ We conclude this subsection with several remarks.

\begin{remark}\label{terminology}
In \cite{bd}, Sect. 3.4.4, the above category $\Fact(X)$ is denoted ${\mathcal {FA}}(X)$,
and its objects are referred to as factorization algebras. Our realization of this category as the
full subcategory of a certain category of coalgebras rather than algebras is Verdier-biased: The latter
would have also been possible if the functors $\jmath(\pi)_!$ had been defined on all of $\fD(U(\pi))$, and not only
on the holonomic subcategory. However, putting the definition of \cite{bd} in the $\oo$-categorical
framework, one can give a Verdier self-dual definition of $\Fact(X)$ by requiring a homotopy-coherent system of homotopy equivalences \eqref{e:factorization map}. For that reason it seems most preferable to term objects of $\Fact(X)$ as ``factorization D-modules."
\end{remark}

\begin{remark} The $\oo$-categories $\Lie\alg^{\ch}(\ran X)$ and $\fD(X)$ are both presentable $\oo$-categories, and they 
both can be made equivalent to the simplicial nerve of model categories. Their intersection, the $\oo$-category of chiral 
algebras $\Lie\alg^{\ch}(X)$, is however not presentable: It fails, for instance, to have coproducts. As a consequence, 
the $\oo$-category of chiral Lie algebras does not arise as the simplicial nerve of a model category. The same holds true on the 
coalgebra side and for $\Fact(X)$.

\end{remark}

\begin{remark} A chiral commutative coalgebra may be thought of as a {\it lax} factorization D-module, i.e., a D-module for 
which there are given the factorizing structure maps (as in \eqref{e:factorization map}),
but which are no longer necessarily homotopy equivalences. The factorization property is closely related to locality in quantum 
field theory, so one might think of general chiral commutative coalgebras as related to field theories in which the condition of locality is weakened. General chiral commutative coalgebras are thus unlikely to be especially physically compelling, but it is still convenient to allow for this mathematical generalization.
\end{remark}

\ssec{Variant: the topological context} 

\sssec{}

In this subsection we let $X$ be a Hausdorff locally compact topological space. We consider the functor
$$I\squig X^I$$
from $(\fset^{\rm surj})^{\on{op}}$ to the category $\on{Top}^{\on{l.c.}}_{\on{cl}}$
of Hausdorff locally compact topological spaces
and maps that are closed embeddings. 

\medskip

Consider the functor 
$$\on{Shv}^!:(\on{Top}^{\on{l.c.}}_{\on{cl}})^{\on{op}}\longrightarrow \ooCat^{\on{st}}_{\on{pres}}$$
that assigns to $\cY\in \on{Top}^{\on{l.c.}}_{\on{cl}}$ the $\oo$-category $\on{Shv}(\cY)$, and
for a closed embedding $f:\cY_1\to \cY_2$ the corresponding functor $f^!$. (Here 
$\on{Shv}(-)$ stands for $\oo$-category of sheaves of $k$-vector spaces, where $k$ is a field
of characteristic $0$.) 

\medskip

Composing, we obtain the functor
$$\on{Shv}^!(X^{\fset^{\rm surj}}):\fset^{\rm surj}\longrightarrow \ooCat^{\on{st}}_{\on{pres}},$$
and we set
$$\on{Shv}(\ran X):=\underset{\fset^{\rm surj}}{\lim}\, \on{Shv}^!(X^{\fset^{\rm surj}}).$$

\medskip

The constructions of Sections \ref{ss:defn of conv} and \ref{ss:chiral and factorization algs}
go through in the present context. In particular, we obtain two symmetric monoidal structures
on $\on{Shv}(\ran X)$, and the notions of chiral Lie algebra, $\star$-Lie algebra and factorization
coalgebra.

\medskip

The analog of Theorem \ref{t:main} goes through for $\on{Shv}(\ran X)$ with no modification.

\sssec{} Let $\cR(X)$ be the topological space defined as in \cite{bd}, Sect. 3.4.1. We have pair
of adjoint functors
\begin{equation} \label{e:two versions of Ran}
\on{Shv}(\ran X)\rightleftarrows \on{Shv}(\cR(X)).
\end{equation}

According to {\it loc.cit.}, Sect. 4.2.4 we have:

\begin{lemma}  \label{l:two versions of Ran}
The functor $\on{Shv}(\ran X)\to \on{Shv}(\cR(X))$ is fully faithful.
\end{lemma}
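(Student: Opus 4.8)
The plan is to make the adjunction explicit and reduce full faithfulness to a base-change computation over the finite powers $X^I$.

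First I would identify the two functors. Let $\rho_I\colon X^I\to\cR(X)$ be the tautological maps $\vec x\squig\{x_i\}$; they satisfy $\rho_I\circ\Delta^\pi\simeq\rho_J$ for every surjection $\pi\colon I\twoheadrightarrow J$, where $\Delta^\pi\colon X^J\to X^I$ is the corresponding diagonal. Hence $!$-restriction along the $\rho_I$ assembles into a functor $\Phi\colon\on{Shv}(\cR(X))\to\on{Shv}(\ran X)$, $\cN\squig\{\rho_I^!\cN\}_I$ (well-defined since $(\Delta^\pi)^!\rho_I^!\simeq\rho_J^!$). The functor of the lemma is its left adjoint $\Psi$ (which exists by the adjoint functor theorem, and morally glues a compatible family into a single sheaf on $\cR(X)$); using the colimit presentation $\on{Shv}(\ran X)\simeq\underset{(\fset^{\rm surj})^{\op}}{\colim}\,\on{Shv}(X^I)$ of \eqref{e:limit and colimit}, whose structure functors are the $(\Delta^I)_*$, one has $\Psi\circ(\Delta^K)_*\simeq(\rho_K)_!$ since $(\Delta^K)^!\circ\Phi\simeq\rho_K^!$. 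So the lemma is equivalent to the assertion that the unit $\on{Id}\to\Phi\Psi$ is a homotopy equivalence.

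Next I would localize the problem. Because $\on{Shv}(\ran X)$ is the colimit $\underset{(\fset^{\rm surj})^{\op}}{\colim}\,\on{Shv}(X^I)$, it is enough to test the unit after precomposition with each structure functor $(\Delta^K)_*$ (equivalently, to evaluate it termwise on each compatible family $\{M^I\}_I$); this amounts to showing that for all finite sets $I,K$ the canonical map
$$(\Delta^I)^!\circ(\Delta^K)_*\longrightarrow\rho_I^!\circ(\rho_K)_!$$
of functors $\on{Shv}(X^K)\to\on{Shv}(X^I)$ is an equivalence. Both sides are given by base change along the fibre product $X^I\times_{\cR(X)}X^K$ — the locus of pairs of configurations with equal support, which is a genuine space (a union of configuration loci) whether formed over $\cR(X)$ or, formally, over $\ran X$, and which decomposes, over the isomorphism classes of spans $I\twoheadrightarrow L\twoheadleftarrow K$ in $\fset^{\rm surj}$, into locally closed copies of the open configuration locus $\conf_L X\subset X^L$, embedded into $X^I$ and into $X^K$ along the corresponding diagonals. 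Matching the two base changes over this stratification gives the equivalence; passing to the limit over $I$ then identifies $\Phi\circ\Psi\circ(\Delta^K)_*$ with $(\Delta^K)_*$, as required. (The left-hand computation is of the same flavor as Lemma \ref{l:ch product on open}.)

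The hard part will be this stratum-by-stratum matching. Conceptually, $\cR(X)$ is glued from the $X^I$ along \emph{closed} diagonal embeddings and symmetric-group quotients, so that, a priori, a sheaf on $\cR(X)$ is reconstructed from its $*$-restrictions to the $X^I$, whereas $\on{Shv}(\ran X)$ is assembled from $!$-restrictions; the content of the lemma is precisely that the $!$-compatible data embed faithfully nonetheless, and this forces one to keep track of the dualizing data when comparing the two base changes. Concretely, the fibre products $X^I\times_{\cR(X)}X^K$ are badly stratified, and the bookkeeping needed to see that the diagonal span $I=L=K$ contributes exactly the unit map while the other spans organize into the required higher coherences is what demands genuine care. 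I expect this to be the only real obstacle; since the $\oo$-categorical framework does not affect it, one may also simply transcribe the argument of \cite{bd}, Sect.\ 4.2.4.
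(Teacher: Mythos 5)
The paper offers no proof of this lemma: it is quoted verbatim from \cite{bd}, Sect.~4.2.4, so there is no internal argument to compare yours against. Your proposal is consistent with that state of affairs. You identify the adjunction correctly (the functor of the lemma is the left adjoint, determined on the colimit presentation by $(\Delta^K)_*\squig(\rho_K)_!$, with right adjoint $\Phi=\{\rho_I^!\}_I$), you correctly reduce full faithfulness to the unit $\on{Id}\to\Phi\Psi$ being a homotopy equivalence, and you locate the right geometric object --- the equal-support locus $X^I\times_{\cR(X)}X^K$, stratified by isomorphism classes of spans $I\twoheadrightarrow L\twoheadleftarrow K$ --- but you then defer the actual verification to the same \cite{bd} reference. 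Since that stratum-by-stratum comparison is the entire content of the lemma, what you have is a correct framing rather than a proof; as a framing it agrees with how \cite{bd} proceeds.

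One step of the reduction does need repair. To test the unit only on objects of the form $(\Delta^K)_*N$ you need $\Phi\Psi$ to commute with the colimits presenting a general object of $\on{Shv}(\ran X)$. The left adjoint $\Psi$ does so automatically, but $\Phi=\{\rho_I^!\}$ is a right adjoint, and $\rho_I^!$ involves $!$-restriction to the closed subspace of configurations of cardinality at most $|I|$, which is not colimit-preserving for free (in this topological setting the paper only works in $\ooCat^{\on{st}}_{\on{pres}}$, with no continuity imposed). The dual reduction via $\Map(M,N)\simeq\Map(M,\Phi\Psi N)$ only lets you resolve the \emph{source} $M$ by generators; the unit must still be shown to be an equivalence on an arbitrary target $N$. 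So you must either verify that the relevant $\rho_I^!$ commute with the colimits in question (plausible, since $\rho_I$ is a finite proper surjection onto a closed subspace, but it has to be said), or run the base-change comparison against an arbitrary $N$ directly. This is not fatal, but as written the argument is incomplete even modulo the deferred matching of strata.
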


\sssec{}  \label{sss:top interp}

Let us interpret the $\star$ and chiral symmetric monoidal structures on $\on{Shv}(\ran X)$
in terms of Lemma \ref{l:two versions of Ran}:

\medskip

Following \cite{bd}, Sect. 3.4.1(iii), the topological space $\cR(X)$ is a commutative semigroup
with respect to the operation denoted ``$\on{union}$" (which corresponds to the operation of
taking the union of finite subsets of $X$). 

\medskip

The $\star$-monoidal structure is induced by the above semigroup structure on $\cR(X)$ by
means of the functor of direct image:
$$\on{union}_*:\on{Shv}(\cR(X))^{\otimes I}\longrightarrow \on{Shv}(\cR(X)).$$

\medskip

To describe the chiral symmetric monoidal structure, we note that for a finite set $I$,
the product $\cR(X)^I$ contains an open subset 
$$(\cR(X)^I)_{\on{disj}}\overset{\jmath^I}\hookrightarrow  \cR(X)^I,$$
corresponding to $I$-tuples of finite subsets of $X$ are are pairwise disjoint.

\medskip

The chiral symmetric monoidal structures is given by the functor
$$(i\squig \cF_i\in \on{Shv}(\cR(X)),\,i\in I)\squig
\on{union}_*\left((\jmath^I)_*\circ (\jmath^I)^*(\underset{i}\boxtimes\, \cF_i)\right)\in \on{Shv}(\cR(X)).$$

\medskip

It follows easily from the definitions that the adjoint functors in Lemma \ref{l:two versions of Ran}
intertwine the corresponding symmetric monoidal structures on $\on{Shv}(\ran X)$ and 
$\on{Shv}(\cR(X))$.

\sssec{}

Finally, let us remark how the notion of factorization  coalgebra in $\on{Shv}(\ran X)$ relates
to that of $\cE_n$-algebra:

\medskip

Let us take $X=\BR^n$. As was communicated to us by Lurie, one has the following
assertion:

\begin{theorem}
The $\oo$-category of translation-equivariant factorization  coalgebras in $\on{Shv}(\ran \RR^n)$
is equivalent to that of $\cE_n$-coalgebras over $k$.
\end{theorem}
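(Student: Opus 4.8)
The plan is to establish the equivalence by unwinding both sides into the same colimit-over-the-Ran-space description, exploiting the translation-invariance to pass from $\cR(\RR^n)$ to a ``pointed'' model built out of disks. First I would use Lemma \ref{l:two versions of Ran} to replace $\on{Shv}(\ran \RR^n)$ by the full subcategory of $\on{Shv}(\cR(\RR^n))$ that it cuts out, and note that the translation action of $\RR^n$ on $\cR(\RR^n)$ is compatible with the $\on{union}$ semigroup structure and with the open loci $(\cR(\RR^n)^I)_{\on{disj}}$, so the chiral symmetric monoidal structure descends to translation-equivariant sheaves. The key geometric input is the standard ``scanning''/``pushing to infinity'' trick for $\RR^n$: a translation-equivariant sheaf on $\cR(\RR^n)$ is determined by its germ near a configuration, and because one can translate any bounded configuration into an arbitrarily small neighborhood of the origin, the relevant data is governed by the poset of finite subsets of $\RR^n$ up to translation, which is cofinal in the poset of (isotopy classes of) finite collections of disjoint open disks. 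This is precisely the combinatorial skeleton underlying the little $n$-disks operad $\cE_n$.

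The next step is to make the factorization condition match the $\cE_n$-coalgebra structure maps. A factorization coalgebra $B$ assigns to each finite subset $S\subset \RR^n$ a complex $B_S$, together with homotopy-coherent equivalences $B_S \weq \bigotimes_{\alpha} B_{S_\alpha}$ for every partition of $S$ into mutually disjoint (in the sense of the $U(\pi)$ loci) pieces. Restricting to $S$ a single point gives an underlying object $C := B_{\on{pt}} = (\Delta^{\on{main}})^!(B) \in \Vect_k$, and for each configuration of $j$ disjoint little disks in a big disk the factorization equivalences, read together with the D-module (here: local system) structure that lets one contract each small disk to its center, produce a map
\begin{equation} \label{e:scanning}
C \longrightarrow C^{\otimes j}
\end{equation}
depending functorially on the point of the little $n$-disks space $\cE_n(j)$. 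Coherence of the factorization data under iterated partitions gives exactly the operadic (here co-operadic, i.e.\ $\cE_n$-coalgebra) compatibilities; conversely, an $\cE_n$-coalgebra $C$ produces a factorization coalgebra by right Kan extending along the functor $(S\subset \RR^n) \squig \bigsqcup\text{(disks around the points of }S\text{)}$, using that $\cE_n$ is an $\infty$-operad whose spaces are the configuration spaces of $\RR^n$. One then checks these two assignments are mutually inverse by a direct comparison of the defining colimits, using \eqref{e:Ran as colimit} and Lemma \ref{l:ch product on open} on the chiral side and the description of $\cE_n$-coalgebras as coalgebras over the configuration-space operad on the other.

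The main obstacle — and the step I would expect to occupy most of the work — is bookkeeping the homotopy coherence: both ``a translation-equivariant factorization coalgebra'' and ``an $\cE_n$-coalgebra'' are highly structured homotopy-coherent gadgets, and producing an equivalence of $\oo$-categories (not merely a bijection on equivalence classes) requires promoting the pointwise matching of \eqref{e:scanning} to an equivalence of the full simplicial/operadic diagrams. The cleanest route is probably to identify both sides with algebras/coalgebras over a common $\oo$-operad: show that the chiral symmetric monoidal structure on translation-equivariant sheaves, when restricted appropriately, is corepresented by the little disks operad, so that ``$\on{Com}$-coalgebra satisfying factorization'' literally unwinds to ``$\cE_n$-coalgebra.'' The translation-equivariance is essential precisely because it kills the global geometry of $\cR(\RR^n)$ and leaves only the local combinatorics of disjoint disks; without it one would be computing something like the (twisted) factorization homology of $\RR^n$ rather than the operad itself. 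I would attribute the result to Lurie, as the excerpt does, and model the argument on the treatment of factorization homology and $\cE_n$-algebras in \cite{dag}.
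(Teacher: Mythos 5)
The first thing to note is that the paper does not prove this theorem: it is stated as an assertion communicated to the authors by Lurie, and the remark immediately following it explains why no proof is given --- the statement does \emph{not} formally follow from Theorem 5.3.4.10 of \cite{dag}, because for a manifold $X$ the category $\on{Shv}(\ran X)$ is identified with (co)sheaves on $\cR(X)$ in the \emph{colimit} topology, whereas the cited theorem of \cite{dag} is proved for a different topology on the Ran space. So there is no argument in the paper to compare yours against; the honest benchmark is whether your sketch closes the gap that the authors themselves flag, and it does not.

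Concretely, the step in your outline where the issue bites is the ``scanning''/cofinality claim: you assert that a translation-equivariant sheaf on $\cR(\RR^n)$ is determined by its germ near a configuration, and that finite subsets of $\RR^n$ up to translation are cofinal in isotopy classes of disjoint disk configurations. Both are statements about the open sets of $\cR(\RR^n)$, and they are exactly the statements whose validity depends on which topology one puts on the Ran space. In the colimit topology --- which is what the limit $\underset{\fset^{\rm surj}}{\lim}\, \on{Shv}^!(X^{\fset^{\rm surj}})$ sees, via Lemma \ref{l:two versions of Ran} --- a basic open set restricted to each stratum $X^I$ need not be of the ``disjoint union of disks'' form, so the comparison with the little $n$-disks operad requires a separate argument; this is precisely the point the paper's remark isolates and defers to Lurie. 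Beyond that, two further steps are asserted rather than established: that the right Kan extension of an $\cE_n$-coalgebra along your disk functor actually lands in the full subcategory $\on{Shv}(\ran \RR^n)\subset \on{Shv}(\cR(\RR^n))$ and satisfies factorization; and the coherence step, for which your proposed fix (``identify both sides with coalgebras over a common $\oo$-operad'') is a restatement of the theorem rather than a proof of it. As an outline of the intended strategy your proposal is reasonable and consistent with how such results are treated in \cite{dag}, but it should be presented as a reduction to the topology-comparison problem, not as a proof.
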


\begin{remark}
This theorem does not formally follow from Theorem 5.3.4.10 of \cite{dag}: One can show
that for $X$ being a manifold, the category $\on{Shv}^!(\ran X)$ is equivalent to the category
of cosheaves on $\cR(X)$ in the colimit topology rather than the topology in which the 
theorem in \cite{dag} is proved. However, according to Lurie, the above result holds for
the colimit topology as well.

\end{remark}

\begin{remark} Based on the previous remark, we can view the theory of chiral Lie algebras 
studied in this paper as an algebro-geometric analogue of the theory of $\cE_n$-algebras.
Recall now that on the category of $\cE_n$-algebras there is a contravariant $\cE_n$-Koszul
duality functor, introduced in \cite{getzlerjones}. 

\medskip

We should emphasize that chiral Koszul duality studied in this paper is \emph{totally unrelated}
to the $\cE_n$-Koszul duality, either technically or conceptually. 

\medskip

However, we should add that the $\cE_n$-Koszul duality does have an interpretation in the factorization
setting as a form of Verdier duality of (co)sheaves on the Ran space, as is discussed in \cite{fact}. 
In the latter incarnation, $\cE_n$-Koszul duality has an analogue in the algebro-geometric context of 
chiral Lie algebras/factorization D-modules, which we hope to discuss in another publication.

\end{remark}

\section{Algebras and coalgebras over (co)operads: recollections}

This section is included for the reader's convenience. None of the results stated here are original.
The general reference for operads and algebras over them in the $\oo$-category framework is \cite{dag}, 
Chapters 2 and 3.

\ssec{Operads}

\sssec{} 

Let $\cX$ be a stable presentable symmetric monoidal $\oo$-category. Let $\cX^\Sigma$ denote the $\oo$-category of
symmetric sequences in $\cX$. I.e., objects of $\cX^\Sigma$ are collections $\cO=\{\cO(n),\,n\geq 1\}$,
where each $\cO(n)$ is an object of $\cX$ acted on by the symmetric group $\Sigma_n$. 

\medskip

The $\oo$-category $\cX^\Sigma$ has a natural monoidal structure. A convenient way to think
about this monoidal structure is the following:

\medskip

We have a natural functor $\cX^\Sigma\to \Funct(\cX,\cX)$:
\begin{equation} \label{e:action of sym seq}
(\cO=\{\cO(n)\})\squig \left(x\squig \underset{n\geq 1}\sqcup\, (\cO(n)\otimes x^{\otimes n})_{\Sigma_n}\right).
\end{equation}
The monoidal structure on $\cX^\Sigma$ is designed so that the functor in \eqref{e:action of sym seq}
is monoidal.

\begin{definition} 
The $\oo$-category ${\rm Op}(\cX)$ (resp., ${\rm coOp}(\cX)$) of augmented operads (resp., cooperads)
in $\cX$ is that of augmented associative algebras (resp., coalgebras) in $\cX^\Sigma$ with respect to the above
monoidal structure.
\end{definition}

\sssec{} 

We have a pair of adjoint functors  
\begin{equation} \label{e:Bar and coBar}
\Bar:\Op(\cX)\rightleftarrows \coOp(\cX):\coBar
\end{equation} see \cite{getzlerjones}, \cite{koszul}, \cite{ching}.

\medskip

In fact, the above pair of adjoint functors is a particular case of the adjunction
between augmented associative algebras and augmented associative coalgebras,
i.e., of one reviewed in Section \ref{ss:Koszul duality} for $\cO$ being the associative
operad, when we take our ambient monoidal category to be $\cX^{\Sigma}$:

\medskip

In the case of the associative operad, the ambient category needs to be just monoidal,
not symmetric monoidal, and neither does it need to be stable. We only need the monoidal
operation to distribute over sifted colimits in each variable.\footnote{We recall that an index category $I$ is called \emph{sifted} if the diagonal functor $I\to I\times I$ is homotopy cofinal, see \cite{topos}, Definition 5.5.8.1. Filtered categories and $\Delta^{\op}$, the opposite of the simplicial indexing category, are the essential examples.}

\medskip

\begin{definition}
An operad $\cO\in \Op(\cX)$ is derived Koszul if the adjunction map
$$\cO\longrightarrow \coBar\circ \Bar(\cO)$$
is a homotopy equivalence.
\end{definition}

\begin{remark} \label{r:Koszul for operads}
Any Koszul operad in chain complexes in $\cX=\Vect_k$, in the original sense of Ginzburg-Kapranov \cite{koszul}, is derived Koszul 
in the above sense. 

\medskip

In fact, any augmented operad for which $1_{\cX}\to \cO(1)$ is a homotopy equivalence is derived Koszul. 
In particular, the Lie operad is derived Koszul (and this is true even for the Lie operad in spectra, see, e.g., \cite{ching}).  
\end{remark}

\ssec{Algebras over an operad}  \label{ss:alg and mod}

\sssec{}

Let $\cX$ be as above. Let $\cC$ be a (not necessarily unital)
stable presentable symmetric monoidal $\oo$-category compatibly tensored over $\cX$,
i.e., $\cC$ is a commutative algebra object in the $(\oo,1)$-category of $\cX$-modules
in $\ooCat^{\on{st}}_{\on{pres},\on{cont}}$. 

\medskip

Formula \eqref{e:action of sym seq} (applied now to $x$ being an object of $\cC$ rather than $\cX$) defines
an action of the monoidal category $\cX^\Sigma$ acts on $\cC$. Hence, an 
operad $\cO$ (resp., cooperad $\cP$) in $\cX$ defines a monad
$\on{T}_{\cO}:\cC\to \cC$  (resp., a comonad $\on{S}_{\cP}:\cC\to \cC$).

\begin{definition}
For an operad $\cO\in \Op(\cX)$, the $\oo$-category $\cO\alg(\cC)$ of (non-unital) $\cO$-algebras 
in $\cC$ is the $\oo$-category of 
$\on{T}_{\cO}$-modules in $\cC$. 
\end{definition}

\begin{remark} The preceding definition of $\cO\alg(\cC)$ is equivalent to that given by Lurie in the case where $\cX$ is the 
$\oo$-category of topological spaces. We adopt the above definition in order to accommodate the definition of Lie algebras 
in a symmetric monoidal $\oo$-category, since $\Lie$ is an operad in $k$-modules, but not in spaces.
\end{remark}

\begin{definition}
For a cooperad $\cP\in \coOp(\cX)$, the $\oo$-category $\cP\nilpcoalg_{\on{d.p.}}(\cC)$
of ind-nilpotent $\cP$-coalgebras in $\cC$ is the $\oo$-category of 
$\on{S}_{\cP}$-comodules in $\cC$. 
\end{definition}

\begin{remark}
We shall introduce the category of ``all" (i.e., not necessarily ind-nilpotent)
$\cP$-coalgebras in Section \ref{ss:all coalgebras}. In {\it loc.cit.} it will also become
clear why we use the terminology ``ind-nilpotent" for $\cP$-coalgebras in $\cC$.

\medskip

The subscript ``d.p." in $\cP\nilpcoalg_{\on{d.p.}}(\cC)$ stands for ``divided powers." Again, we refer the reader
to Section \ref{ss:all coalgebras} where the reason for this notation will become clear. 

\end{remark}

\sssec{}

Let $\cO$ be an object of $\Op(\cX)$. Let $\oblv_\cO$ denote the tautological forgetful functor
$$\cO\alg(\cC)\to \cC.$$
The functor $\oblv_\cO$ commutes with limits, and with sifted colimits.\footnote{The siftedness condition is used
as follows: For a monoidal category $\cC$ in which tensor products distribute over colimits, the functor of
$n$-th tensor power $c\squig c^{\otimes n}$ distributes over sifted colimits.} 
Let $$\Free_\cO:\cC\to \cO\alg(\cC)$$
denote its left adjoint. 

\medskip

In addition, the augmentation on $\cO$ defines the functor
$$\triv_\cO:\cC\to \cO\alg(\cC).$$
The functor $\triv_\cO$ commutes with both limits and colimits. 

\sssec{}

Let $\cP$ be an object of $\coOp(\cX)$. Let $\oblv_{\cP}$ denote the forgetful functor
$$\cP\nilpcoalg_{\on{d.p.}}(\cC)\to \cC.$$
The functor $\oblv_{\cP}$ commutes with colimits. We let 
$$\coFree_{\cP}:\cC\to \cP\nilpcoalg_{\on{d.p.}}(\cC)$$
denote its right adjoint. 

\medskip

In addition, the augmentation on $\cP$ defines the functor 
$$\triv_{\cP}:\cC\to \cP\nilpcoalg_{\on{d.p.}}(\cC).$$
The functor $\triv_{\cP}$ commutes colimits. 

\medskip

If the cooperad $\cP$ has the property that for every $n$, the functor $c\squig \cP(n)\otimes c$
distributes over limits, then the functor $\triv_{\cP}$ commutes with sifted limits. 

\medskip

The above condition on $\cP$ is satisfied in many cases of interest: e.g., if $\cX=\Vect_k$
and all $\cP(n)$ are (bounded complexes of) finite-dimensional vector spaces.

\ssec{Koszul duality functors}  \label{ss:Koszul duality}

\sssec{}

For $\cO\in \Op(\cX)$, we now consider the left adjoint of the functor $\triv_\cO$, which we denote
$$\Bar_\cO: \cO\alg(\cC)\to \cC.$$

\begin{remark}
At the classical level, since the multiplication on 
$\triv_\cO(M)$ is trivial, any map $A\ra \triv_{\cO}(M)$ must send to zero any element $a\in A$, which is decomposable, i.e., a multiple of two or more elements (e.g., $a= f\cdot a' \cdot a''$, for $a', a''$ in $A$ and $f$ in $\cO(2)$). Consequently, the left adjoint assigns to an $\cO$-algebra $A$ the indecomposables of $A$, the quotient of $A$ by the decomposable elements. In the instance of classical commutative algebra, this quotient is isomorphic to the cotangent space of the associated pointed affine scheme, so one can geometrically imagine the indecomposables as forming an operadic version of the cotangent space. Returning to the homotopy theory, the left adjoint of $\triv_\cO$ can be formed in the model category setting as a derived functor of indecomposables, where one resolves an $\cO$-algebra and takes indecomposables in the resolution.
\end{remark}

\begin{remark}
The reason for notation $\Bar_\cO$ is the following. Let $\cA$ be a monoidal $\oo$-category and $\cM$ a module
category. Then, under some mild hypothesis on $\cA$ and $\cM$, for associative algebras $R,R'\in A$ and a 
homomorphism $R \ra R'$, the functor left adjoint to the forgetful functor $\m_{R'}(\cM) \ra\m_R(\cM)$ 
exists, and is computed as the geometric
realization of simplicial object $\Bar(R',R,-)_\bullet$, called the bar-construction, i.e.,
$$\Bar(R',R,-):=|\Bar(R',R,-)_\bullet|$$
(see \cite{dag} or \cite{thez} for a more extended explanation in the context of $\oo$-categories).
Here we take $\cA=\cX^\Sigma$, $\cM=\cC$, $R=\cO$ and $R'=1$.
\end{remark}

\medskip

The definition of the Koszul dual cooperad as the associative coalgebra in $\cX^\Sigma$ Koszul dual to $\cO$ yields:

\begin{lemma} \label{l:identification of comonad}
There is a natural homotopy equivalence of comonads acting on $\cC$:
$$\Bar_\cO\circ \triv_\cO\simeq \on{S}_{\cO^\vee},$$
where $\cO^\vee:=\Bar(\cO)$. 
\end{lemma}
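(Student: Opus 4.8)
The plan is to reduce the claimed equivalence of comonads to a statement purely about the monoidal $\oo$-category $\cX^\Sigma$ of symmetric sequences, and then invoke the general bar/cobar formalism for associative algebras and coalgebras reviewed in Section~\ref{ss:Koszul duality}. First I would unwind the three functors in play. By the monoidal functor \eqref{e:action of sym seq}, an operad $\cO\in\Op(\cX)$, i.e.\ an augmented associative algebra in $\cX^\Sigma$, produces a monad $\on{T}_\cO$ on $\cC$, and $\cO\alg(\cC)=\m_{\on{T}_\cO}(\cC)$; the forgetful functor $\oblv_\cO$ is restriction of modules along the augmentation $\cO\to 1_{\cX^\Sigma}$, and $\triv_\cO$ is restriction along the unit $1_{\cX^\Sigma}\to\cO$, i.e.\ $\triv_\cO=\oblv_{1}\circ(\text{corestriction})$. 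Dually, $\Bar_\cO$ is the left adjoint of $\triv_\cO$. The point of the second Remark preceding Lemma~\ref{l:identification of comonad} is precisely that $\Bar_\cO=\Bar(1,\cO,-)=|\Bar(1,\cO,-)_\bullet|$, the relative bar construction for the algebra map $\cO\to 1$ in $\cA=\cX^\Sigma$ acting on $\cM=\cC$.

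Next I would compute the composite $\Bar_\cO\circ\triv_\cO$. Since $\triv_\cO$ is corestriction of the trivial $1$-module structure, plugging into the bar construction gives $\Bar_\cO\circ\triv_\cO(c)\simeq |\Bar(1,\cO,1)_\bullet\otimes c|$, i.e.\ the underlying object is obtained by tensoring $c$ with the two-sided bar construction $\Bar(1,\cO,1)$ formed inside $\cX^\Sigma$ and then applying the action functor \eqref{e:action of sym seq}. But $\Bar(1,\cO,1)$ is by definition the bar construction of the augmented associative algebra $\cO$, which is exactly the Koszul dual coalgebra $\cO^\vee:=\Bar(\cO)\in\coOp(\cX)$; this is the defining property of $\Bar$ in \eqref{e:Bar and coBar} for $\cX^\Sigma$. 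Since the action functor \eqref{e:action of sym seq} is monoidal, the coalgebra structure on $\cO^\vee$ transports to a comonad structure on the endofunctor $c\squig \coprod_n(\cO^\vee(n)\otimes c^{\otimes n})_{\Sigma_n}$ of $\cC$, and this comonad is by definition $\on{S}_{\cO^\vee}$. Matching the comonad structures (not merely the underlying endofunctors) is the substance of the lemma: the comultiplication on $\Bar_\cO\circ\triv_\cO$ coming from the adjunction $(\Bar_\cO,\triv_\cO)$ must be identified with the one coming from the coalgebra $\cO^\vee$.

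I expect the main obstacle to be exactly this compatibility of the comonad structures, as opposed to the comparatively formal identification of underlying functors. The cleanest way to handle it is not to manipulate simplicial bar constructions by hand but to work one categorical level up: the assignment sending an augmented algebra $R\in\cA$ to the comonad $\Bar_R\circ\triv_R$ on a module category $\cM$, and the assignment sending an augmented coalgebra to its associated comonad, are both functorial, and the general Koszul-duality adjunction for associative algebras in $\cA$ (the $\cO$-generic case of Section~\ref{ss:Koszul duality} with $\cX^\Sigma$ as ambient monoidal category) already packages the statement ``$\Bar_R\circ\triv_R\simeq$ cotensor with $\Bar(R)$'' as comonads. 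So the real task is to check that the action functor \eqref{e:action of sym seq} intertwines (i) the ``cotensor with a coalgebra in $\cX^\Sigma$'' comonad on $\cC$ viewed as a $\cX^\Sigma$-module with (ii) the comonad $\on{S}_{(-)}$; this follows because \eqref{e:action of sym seq} is a monoidal functor and $\cC$ is a $\cX^\Sigma$-module through it, so it carries the module-level cotensor comonad to $\on{S}$. The one technical hypothesis that needs to be in force throughout — and which the excerpt has already arranged (see the footnote on sifted colimits and the distributivity requirement preceding Definition of $\Op$) — is that $\otimes$ in $\cX^\Sigma$ and the action on $\cC$ distribute over sifted colimits, so that the geometric realizations defining the bar constructions are computed correctly and commute with the forgetful functors. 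Granting that, the lemma follows formally.
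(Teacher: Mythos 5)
Your proposal is correct and follows essentially the same route as the paper, which offers no detailed proof but simply asserts that the lemma follows from the definition of $\cO^\vee$ as the associative coalgebra in $\cX^\Sigma$ Koszul dual to $\cO$, together with the identification of $\Bar_\cO$ as the relative bar construction $\Bar(1,\cO,-)$ noted in the preceding remark. Your expansion --- identifying $\Bar_\cO\circ\triv_\cO(c)$ with $\Bar(1,\cO,1)\cdot c$, using monoidality of the action \eqref{e:action of sym seq} and its compatibility with sifted colimits to transport the coalgebra structure on $\Bar(\cO)$ to the comonad structure on $\on{S}_{\cO^\vee}$ --- is exactly the intended argument.
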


The general theory of monads,\footnote{Some of this theory is summarized in Section \ref{ss:monad recollections} and
the relevant fact for the next corollary specifically in Section \ref{sss:best approx}.} implies:

\begin{cor} \label{c:identification of comonad}
The functor $\Bar_\cO:\cO\alg(\cC)\to \cC$ factors as
$$\cO\alg(\cC)\overset{\Bar^{\rm{enh}}_\cO}\longrightarrow \cO^\vee\nilpcoalg_{\on{d.p.}}(\cC)
\overset{\oblv_{\cO^\vee}}\longrightarrow \cC$$
for a canonically defined functor 
$\Bar^{\rm{enh}}_\cO:\cO\alg(\cC)\to \cO^\vee\nilpcoalg_{\on{d.p.}}(\cC)$.
\end{cor}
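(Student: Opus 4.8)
The plan is to deduce this from Lemma \ref{l:identification of comonad} together with the formal theory of monads and comonads, specifically the \emph{monadicity} of $\oblv_{\cO^\vee}$ and the universal property of the category of comodules. Recall first that the functor $\Bar_\cO$ is, by construction, the left adjoint of $\triv_\cO$. Composing adjunctions, the endofunctor $\Bar_\cO \circ \triv_\cO$ of $\cC$ carries a canonical comonad structure (the comultiplication coming from the unit $\id \to \triv_\cO \circ \Bar_\cO$ inserted between the two functors, exactly as the composite $G \circ F$ of an adjoint pair $(F,G)$ with $G$ on the left acquires a comonad structure). Lemma \ref{l:identification of comonad} identifies this comonad with $\on{S}_{\cO^\vee}$, the comonad on $\cC$ defining $\cO^\vee\nilpcoalg_{\on{d.p.}}(\cC)$; this identification is an equivalence of comonads, not merely of underlying endofunctors, which is what makes the rest of the argument go through.

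Next I would invoke the following general fact about an adjunction $F \dashv G$ with $F\colon \cC \to \cD$ left adjoint and $G\colon \cD \to \cC$ right adjoint, where we regard $G$ as the ``forgetful'' direction: the composite $G\circ F$ is a comonad $\mathbb{T}$ on $\cC$ — wait, the roles here are reversed — let me phrase it correctly for our situation. We have $\Bar_\cO \colon \cO\alg(\cC) \to \cC$ left adjoint to $\triv_\cO \colon \cC \to \cO\alg(\cC)$. Dually to the classical Eilenberg--Moore picture, any adjunction $(L \dashv R)$ with $L\colon \cA \to \cB$ produces a comonad $L\circ R$ on $\cB$ and a canonical comparison functor $\cA \to (L R)\comod(\cB)$ landing in comodules, through which $L$ factors as the forgetful functor. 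Applying this to $L = \Bar_\cO$, $R = \triv_\cO$, $\cA = \cO\alg(\cC)$, $\cB = \cC$: we obtain a comonad $\Bar_\cO\circ\triv_\cO$ on $\cC$ and a canonical comparison functor
\[
\cO\alg(\cC) \longrightarrow \bigl(\Bar_\cO \circ \triv_\cO\bigr)\comod(\cC)
\]
over $\cC$. Now substitute the equivalence of comonads $\Bar_\cO\circ\triv_\cO \simeq \on{S}_{\cO^\vee}$ from Lemma \ref{l:identification of comonad}; this transports the target to $\on{S}_{\cO^\vee}\comod(\cC) = \cO^\vee\nilpcoalg_{\on{d.p.}}(\cC)$ by the Definition of ind-nilpotent $\cP$-coalgebras in Section \ref{ss:alg and mod}, and we set $\Bar^{\rm{enh}}_\cO$ to be the resulting functor. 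That it factors $\Bar_\cO$ through $\oblv_{\cO^\vee}$ is immediate, since the comparison functor is by construction compatible with the forgetful functors to $\cC$.

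The one point requiring care — and the main potential obstacle — is verifying that this is legitimate in the $\oo$-categorical setting: namely that the ``comonadic comparison functor'' construction is available and that passing to comodules is functorial in equivalences of comonads, so that the equivalence of Lemma \ref{l:identification of comonad} really does induce an equivalence of the relevant comodule $\oo$-categories compatibly with the forgetful functors. This is precisely the content of the formal theory of (co)monads referenced in the footnote (Section \ref{ss:monad recollections}), in particular the discussion of ``best approximation'' in Section \ref{sss:best approx}: an adjunction always factors through the comonad it generates via a canonical, and canonically defined, comparison functor, dual to Lurie's monadicity formalism in \cite{dag}. Once one grants that machinery, there is no further computation: the corollary is a purely formal consequence of Lemma \ref{l:identification of comonad}. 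I would therefore spend the bulk of any written-out proof simply citing the right statements from Section \ref{ss:monad recollections} and checking the compatibility with $\oblv$ on the nose.
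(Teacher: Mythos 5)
Your proposal is correct and takes essentially the same route as the paper: there the corollary is presented as a formal consequence of Lemma \ref{l:identification of comonad} together with the ``best approximation'' factorization of Section \ref{sss:best approx} (dualized to comonads), i.e., the canonical comparison functor from the source of a left adjoint to comodules over the comonad it generates. Your mid-proof self-correction lands on the right assignment of roles ($L=\Bar_\cO$, $R=\triv_\cO$, comonad $\Bar_\cO\circ\triv_\cO$ on $\cC$), so nothing further is needed.
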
 

Since the functor $\Bar_\cO$, being a left adjoint, commutes with colimits, and since
$\oblv_{\cO^\vee}$ commutes with colimits and is conservative, we obtain that the
functor $\Bar^{\rm{enh}}_\cO$ also commutes with colimits.

\sssec{}

We can depict the resulting commutative diagrams of functors as follows:

\begin{gather}  \label{d:O 1st}
\xy
(-30,20)*+{\cO\alg(\cC)}="X";
(30,20)*+{\cO^\vee\nilpcoalg_{\on{d.p.}}(\cC)}="Y";
(0,0)*+{\cC}="Z";
{\ar@{->}^{\Bar^{\rm{enh}}_\cO} "X";"Y"};
{\ar@{->}_{\Bar_\cO} "X";"Z"};
{\ar@{->}^{\oblv_{\cO^\vee}} "Y";"Z"};
\endxy
\end{gather}
and
\begin{gather}  \label{d:O 2nd}
\xy
(-30,20)*+{\cO\alg(\cC)}="X";
(30,20)*+{\cO^\vee\nilpcoalg_{\on{d.p.}}(\cC)}="Y";
(0,0)*+{\cC.}="Z";
{\ar@{->}^{\Bar^{\rm{enh}}_\cO} "X";"Y"};
{\ar@{<-}_{\triv_\cO} "X";"Z"};
{\ar@{<-}^{\coFree_{\cO^\vee}} "Y";"Z"};
\endxy
\end{gather}

\begin{remark} The relative ease of construction for the above diagram is one the great virtues of $\oo$-category theory. In the setting of model categories, one in general loses the strict monad structure on an adjunction when one passes to derived functors: For example, there is a coherence problem to solve in constructing a coalgebra structure on, say, the bar construction $k\ot_A k$ of an augmented algebra at the chain level, \cite{moore}.

\end{remark}

We have also another commutative diagram, namely:
\begin{gather}  \label{d:O 3rd}
\xy
(-30,20)*+{\cO\alg(\cC)}="X";
(30,20)*+{\cO^\vee\nilpcoalg_{\on{d.p.}}(\cC)}="Y";
(0,0)*+{\cC.}="Z";
{\ar@{->}^{\Bar^{\rm{enh}}_\cO} "X";"Y"};
{\ar@{->}^{\rm{Free}_\cO} "Z";"X"};
{\ar@{->}_{\triv_{\cO^\vee}} "Z";"Y"};
\endxy
\end{gather}

\sssec{}

Let $\cP$ be an object of $\coOp(\cX)$, and consider the right adjoint 
of the functor $\triv_{\cP}$:
$$\coBar_{\cP}:\cP\nilpcoalg_{\on{d.p.}}(\cC)\to \cC.$$

As in Lemma \ref{l:identification of comonad} we have:

\begin{lemma} \label{l:identification of monad}
There is a canonical homomorphism of monads
$$\on{T}_{\cP{}^\vee}\to \coBar_{\cP}\circ \triv_{\cP},$$
where $\cP{}^\vee:=\coBar(\cP)$. 
\end{lemma}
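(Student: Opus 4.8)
The plan is to mirror the proof of Lemma \ref{l:identification of comonad}, but dualized. Recall that $\coBar_{\cP}$ is by construction the right adjoint of $\triv_{\cP}:\cC\to \cP\nilpcoalg_{\on{d.p.}}(\cC)$, and that $\cP\nilpcoalg_{\on{d.p.}}(\cC)$ is the $\oo$-category of $\on{S}_{\cP}$-comodules in $\cC$. Hence the composite comonad whose comodules recover $\cP\nilpcoalg_{\on{d.p.}}(\cC)$ is $\on{S}_{\cP}$, while the composite $\oblv_{\cP}\circ \triv_{\cP}$ is canonically identified with the endofunctor $c\squig \cP(1)\otimes c$ sitting inside $\on{S}_{\cP}$ via the augmentation $1_\cX\to \cP(1)$. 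The monad $\on{T}_{\cP{}^\vee}=\coBar_{\cP}\circ\triv_{\cP}\ldots$ wait — rather, $\coBar_{\cP}\circ\triv_{\cP}$ is the monad we wish to compare with $\on{T}_{\cP{}^\vee}$, where $\cP{}^\vee=\coBar(\cP)$, the associative algebra in $\cX^\Sigma$ Koszul-dual to the associative coalgebra $\cP$.

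First I would recall the purely operadic input: the functor $\coBar:\coOp(\cX)\to\Op(\cX)$ is, via the identification \eqref{e:Bar and coBar} of $\Bar$/$\coBar$ as the associative (co)algebra Koszul duality functors in the monoidal $\oo$-category $\cX^\Sigma$, precisely the cobar construction $\coBar(R',R,-)$ applied with $R=\cP$ (viewed as an augmented coalgebra in $\cX^\Sigma$) and $R'=1_{\cX^\Sigma}$, i.e. the totalization of the cosimplicial object $\coBar(1,\cP,1)_\bullet$. This is the formal dual of the bar-construction description recalled in the remark following Corollary \ref{c:identification of comonad}.

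Next I would invoke the compatibility of the $\cX^\Sigma$-action \eqref{e:action of sym seq} on $\cC$ with (co)limits: by hypothesis $\cC$ is tensored over $\cX$ compatibly with continuity, so applying the monad $c\squig (\text{---})\otimes c$ is a monoidal functor $\cX^\Sigma\to\Funct(\cC,\cC)$, and in particular it carries the cobar totalization $\coBar(1,\cP,1)$ computed in $\cX^\Sigma$ to the corresponding totalization of endofunctors of $\cC$. On the other hand, $\coBar_{\cP}\circ\triv_{\cP}$ is by definition the right adjoint of $\triv_\cP$ precomposed with $\triv_\cP$, and using the comonadic presentation $\cP\nilpcoalg_{\on{d.p.}}(\cC)=\on{S}_{\cP}\comod(\cC)$ together with the general theory of monads (the "best approximation" construction of Section \ref{sss:best approx}), this composite is computed as exactly the totalization $\Tot$ of the cobar resolution of $\triv_\cP$ against $\oblv_\cP$, which by the monoidality of the action functor is the image under $(\text{---})\otimes(-)$ of $\coBar(1,\cP,1)_\bullet$. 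Identifying the two totalizations gives the desired homomorphism of monads $\on{T}_{\cP{}^\vee}\to \coBar_{\cP}\circ\triv_{\cP}$; I would note that, exactly as in the Remark following Lemma \ref{l:identification of comonad}, one only gets a map rather than an equivalence here because the comonad $\on{S}_\cP$ sees only the ind-nilpotent/divided-power coalgebras, so the cobar totalization of the cofree side need not compute the full cobar of $\cP$ on the nose — this is the asymmetry already flagged in the statement.

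The main obstacle I expect is the coherence bookkeeping: making precise that the cobar totalization producing $\coBar_\cP\circ\triv_\cP$ as endofunctors of $\cC$ is identified, as an $E_1$-object (i.e. compatibly with its monad structure), with the image of the $\cX^\Sigma$-level cobar construction that defines $\cP{}^\vee=\coBar(\cP)$ as an associative algebra. In the $\oo$-categorical setting this coherence is supplied for free by the functoriality of the action map $\cX^\Sigma\to\Funct(\cC,\cC)$ as a monoidal functor — which is the whole point of the remark after diagram \eqref{d:O 2nd} — so the argument reduces to assembling the relevant adjunctions and citing \cite{dag} for the bar/cobar computation of adjoints to forgetful functors between (co)module categories. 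No genuinely new computation is needed; this lemma is the formal mirror of Lemma \ref{l:identification of comonad} under the coalgebra$\leftrightarrow$algebra interchange.
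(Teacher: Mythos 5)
Your overall architecture matches the paper's: present $\cP^\vee=\coBar(\cP)$ as the totalization $\on{Tot}(\on{Cobar}^\bullet(\cP))$ of the cobar cosimplicial object in $\cX^\Sigma$, present $\coBar_{\cP}\circ\triv_{\cP}$ as the totalization of the levelwise action of that cosimplicial object on $c\in\cC$, use the universal-monad formalism of Section \ref{sss:best approx} to upgrade the comparison to a homomorphism of monads, and appeal to the monoidality of the action $\cX^\Sigma\to\Funct(\cC,\cC)$ for the multiplicative coherence. This is exactly the construction the paper makes explicit later, in the proof of Lemma \ref{l:cobar for nilpotent}(b).

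However, there is a genuine error at the key step. You assert that the monoidal action functor ``carries the cobar totalization computed in $\cX^\Sigma$ to the corresponding totalization of endofunctors of $\cC$,'' and then speak of ``identifying the two totalizations.'' This is false: the action is continuous, i.e.\ it commutes with colimits, but it does \emph{not} commute with totalizations (limits). What exists is only the canonical interchange map
$$\on{Tot}\bigl(\on{Cobar}^\bullet(\cP)\bigr)\cdot c\;\longrightarrow\;\on{Tot}\bigl(\on{Cobar}^\bullet(\cP)\cdot c\bigr),$$
from ``apply the action to the limit'' to ``take the limit of the termwise action.'' This one-way map is the entire content of the lemma: it is what determines the direction $\on{T}_{\cP^\vee}\to\coBar_{\cP}\circ\triv_{\cP}$, and its failure to be invertible is precisely the reason the paper's remark gives for the map not being a homotopy equivalence. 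If your literal claims held, the two totalizations would agree and the lemma would produce an equivalence, contradicting that remark. Relatedly, your proposed explanation for the non-invertibility --- that $\on{S}_{\cP}$ ``sees only the ind-nilpotent/divided-power coalgebras'' --- is not the relevant mechanism; the whole construction already lives in $\cP\nilpcoalg_{\on{d.p.}}(\cC)$, and the asymmetry with Lemma \ref{l:identification of comonad} (where the bar side uses geometric realizations, which the action \emph{does} preserve) is purely the colimit-versus-limit issue. Replace ``identifying the two totalizations'' by ``using the canonical map from the action applied to the totalization to the totalization of the termwise action,'' and the argument is correct and is the paper's.
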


\begin{remark}
Unlike Lemma \ref{l:identification of comonad}, the map in the above lemma is no longer
a homotopy equivalence, since the action of $\cX^\Sigma$ on $\cC$ does not commute with
totalizations.
\end{remark} 

\begin{cor} \label{c:identification of monad}
The functor $\coBar_{\cP}:\cP\nilpcoalg_{\on{d.p.}}(\cC)\to \cC$ factors as
$$\cP\nilpcoalg_{\on{d.p.}}(\cC)\overset{\coBar^{\rm{enh}}_{\cP}}\longrightarrow \cP{}^\vee\alg(\cC)
\overset{\oblv_{\cP}}\longrightarrow \cC$$
for a canonically defined functor 
$\coBar^{\rm{enh}}_{\cP}:\cP\nilpcoalg_{\on{d.p.}}(\cC)\to \cP{}^\vee\alg(\cC)$.
\end{cor}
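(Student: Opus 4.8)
The plan is to mimic the proof of Corollary \ref{c:identification of comonad}, but to replace the appeal to comonadicity by the weaker — and always available — construction of the comparison functor attached to an adjunction. This replacement is forced: as noted in the remark following Lemma \ref{l:identification of monad}, the map of monads it provides is only a homomorphism, not a homotopy equivalence, so one cannot identify $\cP\nilpcoalg_{\on{d.p.}}(\cC)$ with modules over $\coBar_{\cP}\circ\triv_{\cP}$.

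First I would record that, since $\coBar_{\cP}$ was defined as the right adjoint of $\triv_{\cP}\colon\cC\to\cP\nilpcoalg_{\on{d.p.}}(\cC)$, the composite $\on{T}':=\coBar_{\cP}\circ\triv_{\cP}$ carries a canonical monad structure on $\cC$ — the monad of this adjunction, with unit and multiplication built from the unit $\on{Id}\to\coBar_{\cP}\circ\triv_{\cP}$ and the counit $\triv_{\cP}\circ\coBar_{\cP}\to\on{Id}$. The general theory of monads recalled in Section \ref{ss:monad recollections} (and, in the $\oo$-categorical setting, \cite{dag}) then produces the comparison functor
\[
\cP\nilpcoalg_{\on{d.p.}}(\cC)\longrightarrow \on{T}'\mod(\cC),\qquad
B\longmapsto \bigl(\coBar_{\cP}(B),\ \on{T}'\coBar_{\cP}(B)\to\coBar_{\cP}(B)\bigr),
\]
the module structure being induced by the counit; by construction this functor commutes with the forgetful functors down to $\cC$. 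I would then invoke Lemma \ref{l:identification of monad}, which supplies a homomorphism of monads $\on{T}_{\cP^\vee}\to\on{T}'$ with $\cP^\vee=\coBar(\cP)$. Restriction of scalars along it is a functor $\on{T}'\mod(\cC)\to\on{T}_{\cP^\vee}\mod(\cC)=\cP^\vee\alg(\cC)$, again lying over $\cC$. Setting $\coBar^{\rm{enh}}_{\cP}$ equal to the composite of the two displayed functors gives a functor $\cP\nilpcoalg_{\on{d.p.}}(\cC)\to\cP^\vee\alg(\cC)$ with $\oblv_{\cP^\vee}\circ\coBar^{\rm{enh}}_{\cP}\simeq\coBar_{\cP}$ tautologically, which is the assertion.

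The one place that demands genuine care — and which I expect to be the main obstacle — is checking that the map furnished by Lemma \ref{l:identification of monad} is a map of monads, i.e. is compatible with units and multiplications, so that restriction of scalars is even defined; this is precisely where one uses the identification of $\cP^\vee$ as an associative algebra object of $\cX^\Sigma$, whose multiplication goes over to the monad multiplication of $\on{T}_{\cP^\vee}$ under the action of $\cX^\Sigma$ on $\cC$. Everything else is the ``best approximation'' formalism of Section \ref{sss:best approx}: the comparison functor exists for an arbitrary adjunction and restriction along a monad map is functorial, with no further stability or presentability hypotheses needed beyond those already in force. It is worth flagging the resulting asymmetry with Corollary \ref{c:identification of comonad}: because the comparison map here is not an equivalence, $\coBar^{\rm{enh}}_{\cP}$ carries strictly less information than its $\Bar$-counterpart, and in particular is not expected to be an equivalence onto a reasonable subcategory without additional hypotheses on $\cP$.
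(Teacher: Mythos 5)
Your proposal is correct and follows essentially the same route as the paper: the paper likewise deduces the corollary from the monad structure on $\coBar_{\cP}\circ\triv_{\cP}$ furnished by the adjunction, the homomorphism of monads of Lemma \ref{l:identification of monad}, and the formalism of Section \ref{sss:best approx}, where the composite of the comparison functor with restriction of scalars along a monad map is exactly the factorization you describe. Your worry about compatibility with units and multiplications is already discharged by the statement of Lemma \ref{l:identification of monad}, which asserts a homomorphism of monads, and your closing remark on the asymmetry with Corollary \ref{c:identification of comonad} matches the Remark following that lemma.
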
 

We can depict the resulting commutative diagram of functors as follows:

\begin{gather}  
\xy
(-30,20)*+{\cP{}^\vee\alg(\cC)}="X";
(30,20)*+{\cP\nilpcoalg_{\on{d.p.}}(\cC)}="Y";
(0,0)*+{\cC.}="Z";
{\ar@{<-}^{\coBar^{\rm{enh}}_{\cP}} "X";"Y"};
{\ar@{->}_{\oblv_\cO} "X";"Z"};
{\ar@{->}^{\coBar_{\cP}} "Y";"Z"};
\endxy
\end{gather}

We have also another commutative diagram, namely:
\begin{gather}  
\xy
(-30,20)*+{\cP{}^\vee\alg(\cC)}="X";
(30,20)*+{\cP\nilpcoalg_{\on{d.p.}}(\cC)}="Y";
(0,0)*+{\cC.}="Z";
{\ar@{<-}^{\coBar^{\rm{enh}}_{\cP}} "X";"Y"};
{\ar@{<-}_{\rm{triv}_\cO} "X";"Z"};
{\ar@{<-}^{\rm{coFree}_{\cO^\vee}} "Y";"Z"};
\endxy
\end{gather}

\sssec{}

Combining Lemmas \ref{l:identification of comonad} and \ref{l:identification of monad}, we obtain:

\medskip 

For $\cO$ and $\cP$ as above, let us be given a map $\cO^\vee\to \cP$, or equivalently, a map
$\cO\to \cP{}^\vee$. These maps define functors
$$\cP{}^\vee\alg(\cC)\to \cO\alg(\cC) \text{ and } \cO^\vee\nilpcoalg_{\on{d.p.}}(\cC)\to \cP\nilpcoalg_{\on{d.p.}}(\cC).$$

\begin{cor} \label{c:enhanced adjunction}
The composed functors
$$\cO\alg(\cC)\overset{\Bar^{\rm{enh}}_\cO}\longrightarrow \cO^\vee\nilpcoalg_{\on{d.p.}}(\cC)\to \cP\nilpcoalg_{\on{d.p.}}(\cC)$$
and 
$$\cO\alg(\cC) \leftarrow \coBar(\cP)\alg(\cC) \overset{\coBar^{\rm{enh}}_{\cP}}\longleftarrow \cP\nilpcoalg_{\on{d.p.}}(\cC)$$
are naturally mutually adjoint.
\end{cor}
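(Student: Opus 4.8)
The plan is to deduce Corollary \ref{c:enhanced adjunction} from the unenhanced adjunction $(\Bar_\cO, \triv_\cO)$ together with the comonadicity/monadicity statements packaged in Corollaries \ref{c:identification of comonad} and \ref{c:identification of monad}, by a formal manipulation of adjoint functors. First I would record the starting adjunction: $\Bar_\cO : \cO\alg(\cC) \rightleftarrows \cC : \triv_\cO$, and dually $\triv_{\cP} : \cC \rightleftarrows \cP\nilpcoalg_{\on{d.p.}}(\cC) : \coBar_{\cP}$. Composing these with the forgetful/cofree pairs and with the functors induced by the map $\cO^\vee \to \cP$ (equivalently $\cO \to \cP{}^\vee$), one obtains a candidate adjunction between $\cO\alg(\cC)$ and $\cP\nilpcoalg_{\on{d.p.}}(\cC)$; the content of the corollary is that the \emph{enhanced} functors $\Bar^{\rm enh}_\cO$ and $\coBar^{\rm enh}_{\cP}$ (composed with the functors induced by $\cO^\vee\to\cP$) realize precisely this adjunction, i.e.\ the enhancement is compatible with passing to adjoints.

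The key steps, in order: (1) Observe that a map of cooperads $\cO^\vee \to \cP$ induces a morphism of comonads $\on{S}_{\cO^\vee} \to \on{S}_{\cP}$ on $\cC$, hence a functor $\cO^\vee\nilpcoalg_{\on{d.p.}}(\cC) \to \cP\nilpcoalg_{\on{d.p.}}(\cC)$ over $\cC$; dually the adjoint map $\cO \to \coBar(\cP) = \cP{}^\vee$ induces a morphism of monads $\on{T}_\cO \to \on{T}_{\cP{}^\vee}$ and a restriction functor $\cP{}^\vee\alg(\cC) \to \cO\alg(\cC)$ over $\cC$. These two functors are themselves adjoint — this is the standard fact that restriction along a monad/comonad map has the evident adjoint, and under the identifications $\on{S}_{\cO^\vee} = \Bar_\cO\triv_\cO$, $\on{T}_{\cP{}^\vee} = \coBar_{\cP}\triv_{\cP}$ of Lemmas \ref{l:identification of comonad} and \ref{l:identification of monad} this adjunction is compatible with the forgetful functors to $\cC$. (2) Recall from Corollary \ref{c:identification of comonad} that $\Bar^{\rm enh}_\cO$ is the comonadic lift of $\Bar_\cO$ over $\oblv_{\cO^\vee}$, and from Corollary \ref{c:identification of monad} that $\coBar^{\rm enh}_{\cP}$ is the monadic lift of $\coBar_{\cP}$ over $\oblv_{\cP{}^\vee}$. (3) Assemble: the composite $\cO\alg(\cC) \xrightarrow{\Bar^{\rm enh}_\cO} \cO^\vee\nilpcoalg_{\on{d.p.}}(\cC) \to \cP\nilpcoalg_{\on{d.p.}}(\cC)$ and the composite $\cO\alg(\cC) \leftarrow \cP{}^\vee\alg(\cC) \xleftarrow{\coBar^{\rm enh}_{\cP}} \cP\nilpcoalg_{\on{d.p.}}(\cC)$ sit over $\cC$, and I would check that the unit/counit of the base adjunction $(\Bar_\cO,\triv_\cO)$, transported through these lifts, furnish a unit and counit satisfying the triangle identities. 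Because $\oblv_{\cO^\vee}$ and $\oblv_{\cP{}^\vee}$ are conservative and the whole diagram commutes over $\cC$ (diagrams \eqref{d:O 1st}, \eqref{d:O 2nd} and their $\cP$-analogues), verifying the triangle identities downstairs in $\cC$ suffices.

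A cleaner route, which I would actually prefer to execute, is to bypass explicit unit/counit bookkeeping and argue at the level of mapping spaces: for $A \in \cO\alg(\cC)$ and $B \in \cP\nilpcoalg_{\on{d.p.}}(\cC)$, compute
\[
\Map_{\cP\nilpcoalg_{\on{d.p.}}}\bigl(\text{(image of }\Bar^{\rm enh}_\cO A\text{)}, B\bigr)
\;\simeq\;
\Map_{\cO^\vee\nilpcoalg_{\on{d.p.}}}\bigl(\Bar^{\rm enh}_\cO A,\ B'\bigr),
\]
where $B'$ denotes $B$ regarded via the comonad map as an $\cO^\vee$-coalgebra with its forgetful image unchanged, then use that $\Bar^{\rm enh}_\cO$ is the cofree-corepresentable lift to reduce this to $\Map_\cC(\Bar_\cO A, \oblv B)$, and finally dualize the same chain on the other side: $\Map_{\cO\alg}\bigl(A,\ \text{(image of }\coBar^{\rm enh}_{\cP} B\text{)}\bigr) \simeq \Map_\cC(\oblv A,\ \coBar_{\cP} B)$, invoking that $\coBar^{\rm enh}_{\cP}$ is the free-representable monadic lift. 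The base adjunction $\Map_\cC(\Bar_\cO A, \oblv B) \simeq \Map_\cC(\oblv A, \coBar_{\cP} B)$ — which is just $(\Bar_\cO,\triv_\cO)$ on the left and $(\triv_\cP,\coBar_\cP)$ on the right, glued along the map $\cO\to\cP{}^\vee$ — then closes the loop, and naturality in $A$ and $B$ is automatic since every identification used is natural. The main obstacle I anticipate is purely coherence-theoretic: ensuring that the equivalence of comonads in Lemma \ref{l:identification of comonad} and the homomorphism of monads in Lemma \ref{l:identification of monad} are \emph{compatible} — i.e.\ that the map $\cO \to \cP{}^\vee$ and the map $\cO^\vee \to \cP$ induce, via $\coBar$ and $\Bar$ respectively, the \emph{same} comparison datum — so that the two sides of the sought adjunction are built from a single coherent piece of structure rather than two a priori unrelated ones; granting that (which is exactly the assertion that $\Bar$ and $\coBar$ are adjoint as functors $\Op(\cX^\Sigma)\rightleftarrows\coOp(\cX^\Sigma)$, already invoked in \eqref{e:Bar and coBar}), the rest is a formal consequence of the general theory of (co)monads summarized in Section \ref{ss:monad recollections}.
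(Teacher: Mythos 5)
The paper gives no written proof of this corollary: it is stated as an immediate consequence of Lemmas \ref{l:identification of comonad} and \ref{l:identification of monad} together with the (co)monad formalism later spelled out in Section \ref{ss:monad recollections}. Your first route --- transport the universal $\on{S}_{\cO^\vee}$-coaction on $\Bar_\cO$ along the comonad map $\on{S}_{\cO^\vee}\to \on{S}_{\cP}$, recognize the composite $\cO\alg(\cC)\to \cP\nilpcoalg_{\on{d.p.}}(\cC)$ as the lift of the left adjoint $\Bar_\cO$ through $\oblv_{\on{S}_\cP}$, and invoke the standard fact that a (co)monad acting on an adjunction lifts the entire adjunction to the (co)module categories --- is exactly the intended argument, and it is correct. (One small caveat: conservativity of the forgetful functors does not literally reduce a triangle identity, which is an assertion about a specific $2$-morphism, to a check in $\cC$; but you do not need this, since the adjointness of the lifted functors is part of the general lifting statement recalled in Section \ref{sss:monad univ ppty}, so no hand-verification of triangle identities is required.)

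The ``cleaner route'' that you say you would actually prefer to execute, however, has a genuine gap. The functor $\cO^\vee\nilpcoalg_{\on{d.p.}}(\cC)\to \cP\nilpcoalg_{\on{d.p.}}(\cC)$ is corestriction along the comonad map $\on{S}_{\cO^\vee}\to \on{S}_\cP$; its right adjoint is a coinduction (cotensor) functor, not the operation of ``regarding $B$ as an $\cO^\vee$-coalgebra with its forgetful image unchanged'' --- a comonad map in this direction provides no way to turn an $\on{S}_\cP$-comodule into an $\on{S}_{\cO^\vee}$-comodule, so the object $B'$ in your first displayed equivalence does not exist as described. Moreover the subsequent reduction $\Map_{\cO^\vee\nilpcoalg_{\on{d.p.}}}\bigl(\Bar^{\rm enh}_\cO A, B'\bigr)\simeq \Map_\cC\bigl(\Bar_\cO A, \oblv B\bigr)$ is false in general: by adjunction the left-hand side is $\Map_{\cO\alg}\bigl(A, \coBar^{\rm enh}_{\cO^\vee}(B')\bigr)$, while the right-hand side is $\Map_{\cO\alg}\bigl(A, \triv_\cO(\oblv B)\bigr)$, and $\coBar^{\rm enh}$ is not $\triv\circ\oblv$; mapping spaces out of an object of a comodule category are a totalization whose zeroth term is the mapping space of underlying objects, not that term alone. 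Discard the second route and run the first; the coherence point you raise at the end (that the maps $\cO^\vee\to\cP$ and $\cO\to\cP^\vee$ are two faces of one datum via the bar--cobar adjunction \eqref{e:Bar and coBar}) is indeed the only input beyond formal (co)monad calculus.
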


\ssec{Turning Koszul duality into an equivalence}  \label{ss:turning into equiv}

\sssec{}

Suppose that the operad $\cO$ is derived Koszul. From the above discussion obtain a pair of adjoint functors:

\begin{equation} \label{e:Koszul adj}
\Bar^{\rm{enh}}_\cO:\cO\alg(\cC)\rightleftarrows \cO^\vee\nilpcoalg_{\on{d.p.}}(\cC):\coBar^{\rm{enh}}_{\cO^\vee}.
\end{equation}

The above adjunction is in general not an equivalence. We shall now describe a procedure how to modify
the left-hand side to (conjecturally) turn it into an equivalence.

\sssec{}  \label{sss:completion}

Let us call an $\cO$-algebra $A$ nilpotent, if there exists an integer $n$, such that the maps
$$\cO(n')\otimes A^{\otimes n'}\to A$$
are null-homotopic for $n'\geq n$.

\begin{definition}
An $\cO$-algebra $A$ is pro-nilpotent if it is equivalent to a limit of nilpotent $A$-algebras.
\end{definition}

Let $\cO\alg^{\on{nil}}(\cC)\subset \cO\alg(\cC)$ denote the full subcategory spanned
by pro-nilpotent algebras. 

\medskip

It is easy to see that the above embedding admits a left adjoint,
which we denote $\on{Compl}$, making $\cO\alg^{\on{nil}}(\cC)$ a localization of
$\cO\alg(\cC)$.

\sssec{}  

It follows from the construction that the essential image of the functor 
$$\cO\alg(\cC)\leftarrow\cO^\vee\nilpcoalg_{\on{d.p.}}(\cC):\coBar^{\rm{enh}}_{\cO^\vee}$$
belongs to $\cO\alg^{\on{nil}}$. 
Let us denote the resulting functor
$$\cO\alg^{\on{nil}}(\cC)\leftarrow\cO^\vee\nilpcoalg_{\on{d.p.}}(\cC)$$
by $\on{KD}_{\cO\leftarrow\cO^\vee}$.

\medskip

By adjunction, we obtain that the functor $\Bar^{\rm{enh}}_\cO$ factors as
$$\cO\alg(\cC)\overset{\on{Compl}} \longrightarrow \cO\alg^{\on{nil}}(\cC)\longrightarrow 
\cO^\vee\nilpcoalg_{\on{d.p.}}(\cC),$$
for a canonically defined functor 
$$\on{KD}_{\cO\to\cO^\vee}:\cO\alg^{\on{nil}}(\cC)\longrightarrow \cO^\vee\nilpcoalg_{\on{d.p.}}(\cC),$$
which is left adjoint to $\on{KD}_{\cO\leftarrow\cO^\vee}$.

\begin{conjecture}  \label{conj:full Koszul}
The adjoint functors
$$\on{KD}_{\cO\to\cO^\vee}:\cO\alg^{\on{nil}}(\cC)\rightleftarrows \cO^\vee\nilpcoalg_{\on{d.p.}}(\cC):
\on{KD}_{\cO\leftarrow\cO^\vee}$$
are equivalences of $\oo$-categories.
\end{conjecture}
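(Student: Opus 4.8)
The plan is to prove the conjecture only in the case that interests us, namely when $\cC$ is pro-nilpotent in the stringent sense announced in the introduction — that is, $\cC$ is realized as an inverse limit $\cC \simeq \lim_\alpha \cC_\alpha$ in $\ooCat^{\on{st}}_{\on{pres},\on{cont}}$ of symmetric monoidal $\oo$-categories in which the $n$-fold tensor product functor vanishes for $n$ large (depending on $\alpha$). In such a $\cC$ the inclusions \eqref{e:inclusions} are equivalences (this is the ``main feature'' asserted in the introduction), so the completion functor $\on{Compl}$ is an equivalence, and it suffices to show that the original adjunction \eqref{e:Koszul adj}, $\Bar^{\rm enh}_\cO \dashv \coBar^{\rm enh}_{\cO^\vee}$, is an equivalence outright. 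First I would reduce to the case of a single $\cC_\alpha$: both $\cO\alg(-)$ and $\cO^\vee\nilpcoalg_{\on{d.p.}}(-)$ commute with the relevant limits of tensor $\oo$-categories (the forgetful functors $\oblv_\cO$, $\oblv_{\cO^\vee}$ are conservative and limit-preserving, and the monad $\on{T}_\cO$ resp.\ comonad $\on{S}_{\cO^\vee}$ is assembled levelwise from the tensor structure), and the enhanced bar/cobar functors are compatible with the transition functors; so the adjunction over $\cC$ is the limit of the adjunctions over the $\cC_\alpha$, and a limit of equivalences is an equivalence.

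Now assume $\cC$ itself has vanishing $n$-fold tensor products for $n \geq N$. The key consequence is that all the relevant constructions become \emph{finite}: for an $\cO$-algebra $A$, the bar complex computing $\Bar_\cO(A) = |\Bar(1,\cO,A)_\bullet|$ has, in each internal degree, only finitely many nonzero terms (those involving $\leq N-1$ tensor factors of $A$), so the geometric realization is a \emph{finite} colimit; dually for $\coBar_{\cO^\vee}$ the totalization is a \emph{finite} limit. In a stable $\oo$-category finite limits and finite colimits agree, and in particular $\oblv_{\cO^\vee}$ (which a priori only preserves colimits) now also preserves these particular finite totalizations, so the monad map of Lemma \ref{l:identification of monad}, $\on{T}_{\coBar(\cO^\vee)} \to \coBar_{\cO^\vee}\circ\triv_{\cO^\vee}$, becomes a homotopy equivalence — this is exactly the point the remark after that Lemma flags as the obstruction in general. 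Since $\cO$ is derived Koszul, $\coBar(\cO^\vee) = \coBar\circ\Bar(\cO) \simeq \cO$, so we get an identification of the comonad $\Bar^{\rm enh}_\cO\circ\coBar^{\rm enh}_{\cO^\vee}$ on $\cO^\vee\nilpcoalg_{\on{d.p.}}(\cC)$, resp.\ the monad $\coBar^{\rm enh}_{\cO^\vee}\circ\Bar^{\rm enh}_\cO$ on $\cO\alg(\cC)$, with the identity up to the coherences packaged by the enhancement.

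It then remains to run the Barr–Beck–Lurie argument on both sides. On the algebra side: $\oblv_\cO$ is conservative and preserves $\oblv_\cO$-split geometric realizations, hence $\cO\alg(\cC)$ is comonadic over $\cC$ for the comonad $\oblv_\cO\circ\Free_\cO$ — wait, rather monadic for $\on{T}_\cO$; symmetrically $\cO^\vee\nilpcoalg_{\on{d.p.}}(\cC)$ is comonadic over $\cC$ for $\on{S}_{\cO^\vee}$ via $\oblv_{\cO^\vee}$. By Lemma \ref{l:identification of comonad} the comonad $\Bar_\cO\circ\triv_\cO$ on $\cC$ is $\on{S}_{\cO^\vee}$, and by the finiteness just established the monad $\coBar_{\cO^\vee}\circ\triv_{\cO^\vee}$ is $\on{T}_\cO$; one checks that $\Bar^{\rm enh}_\cO$ and $\coBar^{\rm enh}_{\cO^\vee}$ are the functors on (co)module categories induced by these (co)monad identifications, so by the monadic/comonadic descent and the fact that the (co)unit maps are equivalences after applying the conservative $\oblv$'s, the adjunction $\Bar^{\rm enh}_\cO \dashv \coBar^{\rm enh}_{\cO^\vee}$ is an equivalence. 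The main obstacle, and the only place the stringent pro-nilpotence hypothesis is genuinely used, is the middle step: getting the monad map of Lemma \ref{l:identification of monad} to be an equivalence, which fails precisely because $\cX^\Sigma$ acting on a general $\cC$ does not commute with totalizations — the vanishing of high tensor powers is exactly what truncates the offending totalization to a finite (hence colimit-compatible) one.
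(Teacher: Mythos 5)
This statement is a \emph{conjecture} in the paper; the authors prove it only in the special case where $\cC$ is pro-nilpotent (Proposition \ref{p:duality for nilpotent}), and your restriction to that case is therefore consistent with what the paper actually does. Your overall strategy also matches theirs: reduce along the tower $\cC\simeq \lim \cC_\alpha$ to the case of a single nilpotent $\cC_\alpha$ (using that the transition functors preserve both limits and colimits, so that $\Bar^{\rm{enh}}$ and $\coBar^{\rm{enh}}$ are computed levelwise), then use vanishing of high tensor powers to truncate the cobar totalization to a finite limit, which in a stable category is a colimit; this is exactly Lemma \ref{l:cobar for nilpotent}, and it upgrades the monad map of Lemma \ref{l:identification of monad} to an equivalence. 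Combined with derived Koszulness of $\cO$, this gives full faithfulness of $\Bar^{\rm{enh}}_\cO$ essentially as in the paper (though you should make explicit the reduction to free algebras via geometric realizations, which requires Corollary \ref{c:coBar for nilp}, i.e.\ part (a) of Lemma \ref{l:cobar for nilpotent}, not just part (b)).

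The genuine gap is in your last step. Knowing that $\coBar_{\cO^\vee}\circ\triv_{\cO^\vee}\simeq \on{T}_\cO$ as monads does not by itself identify $\cO^\vee\nilpcoalg_{\on{d.p.}}(\cC)$ with $\cO\alg(\cC)$: to run Barr--Beck--Lurie on the adjunction $\triv_{\cO^\vee}\dashv\coBar_{\cO^\vee}$ you need the \emph{right} adjoint $\coBar_{\cO^\vee}$ to be conservative, and this is precisely the nontrivial half of the argument. Your phrase ``the conservative $\oblv$'s'' conflates the forgetful functors $\oblv_\cO$, $\oblv_{\cO^\vee}$ (trivially conservative) with the cobar functor $\coBar_{\cO^\vee}$, whose conservativity fails in general and must be proved; equivalently, full faithfulness of the left adjoint plus conservativity of the right adjoint is what forces the counit to be an equivalence. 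The paper supplies this by a separate induction over the nilpotent tower: given $\alpha:B_1\to B_2$ not an equivalence, one takes the minimal $i$ with $f_i(\alpha)$ not an equivalence and uses the coskeletal filtration of $\coBar^\bullet_{\cP}$ together with the map $\coBar_{\cP}(B)\to\oblv_{\cP}(B)$ to see that $f_i\circ\coBar_{\cP}(\alpha)$ is still not an equivalence. You need to add an argument of this kind (or an equivalent verification of the counit). A smaller omission: the truncation of the cobar totalization also uses the hypothesis that $\cO(1)\to 1_\cX$ is an equivalence (or that $\ker(\cO(1)\to 1_\cX)$ acts nilpotently), since the arity-one part of $\on{S}_{\cO^\vee}$ is not killed by vanishing of high tensor powers; you should carry this hypothesis along.
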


\medskip

In the next section we will give a proof of this conjecture in a particular case. 

\begin{remark} The derived notion of Koszul duality discussed here is broadly construed; there is no use made of Koszul resolutions in the sense of \cite{priddy}. It would be equally accurate to call this bar-cobar duality.

\end{remark}

\ssec{Coalgebras over an operad}  \label{ss:all coalgebras}

\sssec{}

Note that the monoidal $\oo$-category $\cX^\Sigma$ of symmetric sequences is endowed with
a different right lax action on $\cC$, i.e., a lax monoidal functor $\cX^\Sigma \ra \Funct(\cC,\cC)$:
\begin{equation} \label{e:action 2}
(\cO=\{\cO(n)\})\squig \left(c\squig \underset{n\geq 1}\Pi\, (\cO(n)\otimes c^{\otimes n})_{\Sigma_n}\right).
\end{equation}

\medskip

Hence, for a cooperad $\cP\in \coOp(\cX)$, it makes sense to talk about $\cP$-comodules in
$\cC$ with respect to this new action. We denote the resulting $\oo$-category of comodules by
$\cP\coalg_{\on{d.p.}}(\cC)$ and call them $\cP$-coalgebras (with divided powers). See \cite{fresse} for a treatment of simplicial $\cO$-algebras with divided powers, where it shown, for instance, that a simplicial Lie algebra with divided powers is a simplicial restricted Lie algebra.

\begin{remark}
Since the above is only a right lax action, a cooperad $\cP$ does not define a comonad
in $\cC$. In particular, the forgetful functor $\cP\coalg_{\on{d.p.}}(\cC)\to \cC$
does not in general admit a right adjoint.
\end{remark}

\medskip

We have an evident forgetful functor
\begin{equation} \label{e:sum vs products one}
\cP\nilpcoalg_{\on{d.p.}}(\cC)\longrightarrow \cP\coalg_{\on{d.p.}}(\cC).
\end{equation}

\begin{remark} One can show that the above functor $\cP\nilpcoalg_{\on{d.p.}}(\cC)\to \cP\coalg_{\on{d.p.}}(\cC)$
is fully faithful and that it admits right adjoint, making the $\oo$-category $\cP\nilpcoalg_{\on{d.p.}}(\cC)$ into a colocalization of
$\cP\coalg_{\on{d.p.}}(\cC)$.
\end{remark}

\sssec{}

Note now that we have yet another action (resp., right lax action) of $\cX^\Sigma$ on $\cC$:
\begin{equation} \label{e:action 3}
(\cO=\{\cO(n)\})\squig \left(c\squig \underset{n\geq 1}\sqcup\, (\cO(n)\otimes c^{\otimes n})^{\Sigma_n}\right),
\end{equation}
and
\begin{equation} \label{e:action 4}
(\cO=\{\cO(n)\})\squig \left(c\squig \underset{n\geq 1}\Pi\, (\cO(n)\otimes c^{\otimes n})^{\Sigma_n}\right)
\end{equation}
respectively.

\medskip

Thus, for a cooperad $\cP\in \coOp(\cX)$ we have two more notions of $\cP$-coalgebras in $\cC$.
We denote the corresponding $\oo$-categories by
$$\cP\nilpcoalg(\cC) \text{ and } \cP\coalg(\cC),$$
respectively. As in the case of divided powers, we have natural forgetful functor
\begin{equation} \label{e:sum vs products two}
\cP\nilpcoalg(\cC)\longrightarrow \cP\coalg(\cC).
\end{equation}

\sssec{}

We also have natural homomorphisms of right lax actions
$$\text{original action }\to \text{ \eqref{e:action 3} }  \text{ and } \text{ \eqref{e:action 2} } \to \text{ \eqref{e:action 4} },$$ 
given by the trace map
$$(-)_{\Sigma_n}\to (-)^{\Sigma_n},$$
(i.e., averaging over the group $\Sigma_n$), 
and the corresponding functors between the $\oo$-categories of comodules:
\begin{equation} \label{e:inv vs coinv}
\cP\nilpcoalg_{\on{d.p.}}(\cC)\to \cP\nilpcoalg(\cC) \text{ and } \cP\coalg_{\on{d.p.}}(\cC)\to \cP\coalg(\cC).
\end{equation}

\medskip

Let us note that when $\cX$ is compatibly tensored over $\Vect_k$, where $k$ has characteristic zero,
the above homomorphisms of actions are homotopy equivalences, and hence the functors 
in \eqref{e:inv vs coinv} are equivalences.

\section{Koszul duality in nilpotent tensor $\oo$-categories}

\ssec{Nilpotent and pro-nilpotent tensor $\oo$-categories}

We retain the setting of Section  \ref{ss:alg and mod}. 

\begin{definition} \label{def:nilp}
We shall say that $\cC$ is pro-nilpotent if it can be exhibited as a limit
$$\cC\simeq \underset{\BN^{\on{op}}}{\lim}\, \cC_i$$
(where the limit is taken in the $(\oo,1)$-category of stable symmetric monoidal $\oo$-categories compatibly tensored over $\cX$),
such that 
\begin{itemize}

\item $\cC_0=0$;

\item For every $i\geq j$, the transition functor $f_{i,j}:\cC_{i}\to \cC_{j}$ commutes with limits;\footnote{It is are also required to commute with colimits, according to our conventions, see Section  \ref{sss:oo-cat}.}

\item For every $i$, the restriction of the tensor product functor $\cC_i\otimes \cC_i\to \cC_i$ to
$\on{ker}(f_{i,i-1})\otimes \cC_i$ is null-homotopic.

\end{itemize}

\smallskip

We shall say that $\cC$ is nilpotent of order $n$, if the functors $f_{i,j}$ are equivalences for 
$i,j\geq n$.
\end{definition}

\medskip

We are going to show: 
\begin{prop} \label{p:duality for nilpotent}
Assume that the operad $\cO$ is such that augmentation map
$\cO(1) \ra 1_\cX$ is a homotopy equivalence. Assume also that $\cC$ is pro-nilpotent. Then
the mutually adjoint functors of \eqref{e:Koszul adj} are 
homotopy equivalences of $\oo$-categories.
\end{prop}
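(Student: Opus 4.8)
The plan is to reduce the pro-nilpotent case to a limit of the nilpotent cases, and then prove the nilpotent case by induction on the order of nilpotence, using the compatibility of all the functors involved (Bar, coBar, their enhancements, $\triv$, $\coFree$, $\oblv$) with the transition functors $f_{i,j}$.

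\medskip

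\textbf{Step 1: Reduce to the nilpotent case.} Suppose $\cC \simeq \lim_{\BN^{\op}} \cC_i$ with each $\cC_i$ nilpotent of order $i$. Since the transition functors $f_{i,j}$ commute with limits and colimits and are symmetric monoidal over $\cX$, they induce functors on $\cO\alg(-)$ and on $\cO^\vee\nilpcoalg_{\on{d.p.}}(-)$, and one checks that $\cO\alg(\cC) \simeq \lim_i \cO\alg(\cC_i)$ and $\cO^\vee\nilpcoalg_{\on{d.p.}}(\cC) \simeq \lim_i \cO^\vee\nilpcoalg_{\on{d.p.}}(\cC_i)$ --- the first because $\oblv_\cO$ creates limits and $\cC \simeq \lim_i \cC_i$, the second similarly since $\oblv_{\cO^\vee}$ creates colimits and is comonadic, and one shows the comonad $\on{S}_{\cO^\vee}$ is compatible with the limit. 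The functors $\Bar^{\rm enh}_\cO$ and $\coBar^{\rm enh}_{\cO^\vee}$ are defined uniformly (they come from $\Bar_\cO \dashv \triv_\cO$ and the monad/comonad formalism, which is preserved by symmetric monoidal functors commuting with the relevant colimits/limits), so they are compatible with the $f_{i,j}$; hence if the adjunction \eqref{e:Koszul adj} is an equivalence for each $\cC_i$, it is an equivalence for $\cC$. So it suffices to treat $\cC$ nilpotent of order $n$.

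\medskip

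\textbf{Step 2: Induct on the order of nilpotence.} For $n=0$ (or $n=1$ with $\cC=0$) the statement is vacuous. For the inductive step, present $\cC = \cC_n$ together with $f := f_{n,n-1}: \cC_n \to \cC_{n-1}$, whose kernel $\cK := \ker(f)$ has the property that $\cK \otimes \cC_n \to \cC_n$ is null-homotopic, i.e. $\cK$ is a ``square-zero'' piece. The idea is that $\cO$-algebras (resp.\ $\cO^\vee$-coalgebras) in $\cC_n$ are controlled by $\cO$-algebras in $\cC_{n-1}$ together with a ``module'' datum over $\cK$ on which all higher operations vanish. Concretely: since $\cK$ absorbs products, an $\cO$-algebra structure on $A \in \cC_n$ amounts to an $\cO$-algebra structure on $f(A) \in \cC_{n-1}$, a lift of the underlying object to $\cC_n$, plus the single binary-type interaction, and because the operations into $\cK$ are trivial the only remaining data is that of an abelian-group (trivial-algebra) object; likewise on the coalgebra side the divided-power coalgebra structure degenerates on the $\cK$-part to a trivial (cofree on nothing) piece. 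Since the enhanced Koszul functors intertwine $\triv_\cO \leftrightarrow \coFree_{\cO^\vee}$ and $\Free_\cO \leftrightarrow \triv_{\cO^\vee}$ (diagrams \eqref{d:O 2nd}, \eqref{d:O 3rd} and their coalgebra analogues), and since on the trivial/cofree pieces the adjunction \eqref{e:Koszul adj} is tautologically an equivalence (here is where $\cO(1) \xrightarrow{\sim} 1_\cX$, i.e.\ $\cO$ derived Koszul, is used: $\coBar\circ\Bar(\cO) \simeq \cO$ so the unit/counit are equivalences on the generators), we get that the adjunction is an equivalence on $\cC_n$ provided it is on $\cC_{n-1}$. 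Assembling this carefully --- ideally by showing $\cO\alg(\cC_n)$ and $\cO^\vee\nilpcoalg_{\on{d.p.}}(\cC_n)$ each sit in a (co)Cartesian square over the order-$(n-1)$ categories with the same ``$\cK$-linear trivial'' corner, compatibly under $\Bar^{\rm enh}_\cO$ --- finishes the induction.

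\medskip

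\textbf{Main obstacle.} The delicate point is Step 2: making precise the claim that passing from $\cC_{n-1}$ to $\cC_n$ only adds a ``trivial square-zero'' layer, and that the enhanced bar--cobar functors respect the resulting decomposition. One must show that the forgetful functors $\cO\alg(\cC_n) \to \cO\alg(\cC_{n-1})$ and $\cO^\vee\nilpcoalg_{\on{d.p.}}(\cC_n) \to \cO^\vee\nilpcoalg_{\on{d.p.}}(\cC_{n-1})$ have a fiber described purely in terms of the kernel $\cK$ with \emph{trivial} higher structure --- this is exactly where the null-homotopy hypothesis on $\ker(f_{i,i-1}) \otimes \cC_i \to \cC_i$ enters, and controlling the homotopy-coherent (not just strict) vanishing of all the operads'/cooperads' higher operations on this layer, uniformly across the bar construction (a geometric realization) and the cobar construction (a totalization), is the technical heart of the argument. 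Once that structural statement is in hand, the equivalence follows because both $\Bar^{\rm enh}_\cO$ restricted to this layer and its right adjoint become identity-like functors between copies of $\cK$-modules with trivial structure, and $\oblv_{\cO^\vee}$ being conservative and colimit-preserving lets one conclude the unit and counit of \eqref{e:Koszul adj} are equivalences.
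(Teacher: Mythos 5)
Your Step 1 (reduction from pro-nilpotent to nilpotent) matches the paper's argument. But your Step 2 is not the paper's route, and as written it contains a genuine gap that you yourself flag: the entire proof rests on the unproved structural claim that $\cO\alg(\cC_n)$ and $\cO^\vee\nilpcoalg_{\on{d.p.}}(\cC_n)$ decompose over the order-$(n-1)$ categories with a fiber consisting of \emph{trivially}-structured objects of $\cK=\on{ker}(f_{n,n-1})$. That claim is doubtful even heuristically: the condition is that tensor products with one factor in $\cK$ vanish, which makes the fiber of $\cO\alg(\cC_n)\to\cO\alg(\cC_{n-1})$ a category of square-zero extensions; even when the induced module structures on the $\cK$-part are null, the extension (gluing) data classifying the lift is nontrivial, so the fiber is not simply ``abelian objects in $\cK$,'' and the categories do not sit in the clean (co)Cartesian squares you hope for. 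Moreover, your argument never confronts the point where nilpotence is actually indispensable: the cobar construction is an infinite totalization, and the map of monads $\on{T}_{\cP^\vee}\to\coBar_{\cP}\circ\triv_{\cP}$ of Lemma \ref{l:identification of monad} is \emph{not} an equivalence in general (see the remark following it). Saying the adjunction is ``tautologically an equivalence on the trivial/cofree pieces'' because $\cO$ is derived Koszul silently assumes this convergence.

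The paper avoids the induction on nilpotence order entirely. Once reduced to the nilpotent case, it proves (Lemma \ref{l:cobar for nilpotent}) that vanishing of $n$-fold products forces the cobar cosimplicial object to be $n$-coskeletal, so its totalization is a finite limit, hence (by stability) a colimit; this yields both that $\coBar_{\cP}$ commutes with sifted colimits and that $\on{T}_{\cP^\vee}\to\coBar_{\cP}\circ\triv_{\cP}$ is an equivalence. Full faithfulness of $\Bar^{\rm{enh}}_\cO$ then follows by resolving an arbitrary algebra by \emph{free} algebras (not trivial ones) and checking the unit on $\Free_\cO$, where it reduces to $\on{T}_\cO\to\on{T}_{(\cO^\vee)^\vee}\to\coBar_{\cO^\vee}\circ\triv_{\cO^\vee}$, an equivalence by derived Koszulness plus the convergence statement. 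Essential surjectivity is obtained by a separate conservativity argument for $\coBar^{\rm{enh}}_{\cO^\vee}$, via a minimal-index argument along the tower $\{f_i\}$. If you want to salvage your approach, you would need to (i) formulate and prove the square-zero decomposition of the algebra and coalgebra categories, including the extension data, and (ii) separately establish the convergence of the cobar construction on each layer --- at which point you will essentially have reproduced the paper's Lemma \ref{l:cobar for nilpotent}, which is the real content.
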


\begin{remark}
The assumption that the map $\cO(1)\ra 1_\cX$ is a homotopy equivalence can be weakened. All we actually need is
that $\cO$ be derived Koszul and that the kernel of $\cO(1)\ra 1_\cX$ act nilpotently on $\cC$.
\end{remark}

\medskip

The rest of this subsection is devoted to the proof of the above proposition.

\sssec{Reduction to the nilpotent case}

Let $\cC$ be written as $\underset{\alpha}{\lim}\, \cC_\alpha$, where the transition functors
commute with limits and colimits. For each index $\alpha$, let $f_\alpha$ denote the
evaluation functor $\cC\to \cC_\alpha$. 

\medskip

The fact that the functors $f_{\alpha,\beta}:\cC_\beta\to \cC_\alpha$
commute with limits (resp., colimits) implies that for every $\alpha$, the functor $f_\alpha$ 
commutes with limits (resp., colimits). I.e., limits (resp., colimits) in $\cC$ can be computed ``component-wise."

\medskip 

We have
$$\cO\alg(\cC)\simeq \underset{\alpha}{\lim}\, \cO\alg(\cC_\alpha),$$
and this equivalence commutes with the corresponding functors $\oblv_\cO$
(this requires no assumption on the transition functors). We also have
$$\cO^\vee\nilpcoalg_{\on{d.p.}}(\cC)\simeq \underset{\alpha}{\lim}\, \cO^\vee\nilpcoalg_{\on{d.p.}}(\cC_\alpha),$$
and this equivalence commutes with the corresponding functors $\oblv_{\cO^\vee}$
(this follows from the above mentioned fact that the functors $f_\alpha$ commute with
colimits).

\medskip

Moreover, we claim that for each $\alpha$, the diagram
$$
\CD
\cO\alg(\cC)  @>{\Bar^{\rm{enh}}_\cO}>>  \cO^\vee\nilpcoalg_{\on{d.p.}}(\cC) \\
@V{e_\alpha}VV   @VV{e_\alpha}V  \\
\cO\alg(\cC_\alpha)  @>{\Bar^{\rm{enh}}_\cO}>>  \cO^\vee\nilpcoalg_{\on{d.p.}}(\cC_\alpha)
\endCD
$$
commutes. This again follows from the fact that the functors $f_\alpha$ commute with colimits.

\medskip

The diagram
$$
\CD
\cO\alg(\cC)  @<{\coBar^{\rm{enh}}_{\cO^\vee}}<<  \cO^\vee\nilpcoalg_{\on{d.p.}}(\cC) \\
@V{e_\alpha}VV   @VV{e_\alpha}V  \\
\cO\alg(\cC_\alpha)  @<{\coBar^{\rm{enh}}_{\cO^\vee}}<<  \cO^\vee\nilpcoalg_{\on{d.p.}}(\cC_\alpha)
\endCD
$$
commutes as well, and this follows from the fact that the functors $f_\alpha$ commute with limits. 

\medskip

The commutativity of the above two diagrams shows that it if the adjoint functors
of \eqref{e:Koszul adj} are equivalences for each $\cC_\alpha$, then they
are also equivalences for $\cC$. 

\sssec{The nilpotence condition}

Thus, from now on we shall assume that $\cC$ is nilpotent. We will use it in the following 
form:

\begin{lemma} \label{l:cobar for nilpotent} 
Assume that $\cC$ is nilpotent.  For any cooperad $\cP$ we have: 

\smallskip

\noindent{\em(a)} The functor $\coBar_{\cP}:\cP\nilpcoalg_{\on{d.p.}}(\cC)\to \cC$
commutes with sifted colimits.

\smallskip

\noindent{\em(b)} The map of monads of Lemma \ref{l:identification of monad}
is a homotopy equivalence.

\end{lemma}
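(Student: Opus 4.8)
The plan is to compute $\coBar_{\cP}$ explicitly as a totalization, then use nilpotence to collapse that totalization to a \emph{finite} limit; once that is done both parts are essentially formal. Recall that $\coBar_{\cP}$ is the right adjoint of $\triv_{\cP}$ and, dually to the presentation of $\Bar_{\cO}$ as a geometric realization of a bar construction, is computed by the cobar construction: for $B\in\cP\nilpcoalg_{\on{d.p.}}(\cC)$,
\begin{equation*}
\coBar_{\cP}(B)\;\simeq\;\Tot\bigl(\,[m]\,\squig\,\on{S}_{\cP}^{\circ m}(\oblv_{\cP}B)\,\bigr),
\end{equation*}
the totalization of the cosimplicial object whose cofaces and codegeneracies come from the $\on{S}_{\cP}$-comodule structure on $B$, the comultiplication of $\on{S}_{\cP}$, and the coaugmentation/counit. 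Writing $\on{S}_{\cP}(c)\simeq\bigsqcup_{k\ge 1}(\cP(k)\otimes c^{\otimes k})_{\Sigma_k}$ and $\overline{\on{S}}_{\cP}$ for its reduced part (arities $\ge 2$), the standard conormalization computation identifies the successive fiber of the tower $\Tot_{\le m}\to\Tot_{\le m-1}$ with $\Omega^m$ of $\overline{\on{S}}_{\cP}^{\,\circ m}(\oblv_{\cP}B)$.

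\medskip

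Next I observe that each application of $\overline{\on{S}}_{\cP}$ strictly raises tensor-power degree: a summand of $\overline{\on{S}}_{\cP}^{\,\circ m}(d)$ corresponds to a depth-$m$ tree all of whose internal vertices have arity $\ge 2$, so that $\overline{\on{S}}_{\cP}^{\,\circ m}$ is built, as an iterated coproduct, out of $(\ge m+1)$-fold tensor powers of objects of $\cC$. On the other hand, an easy induction from Definition \ref{def:nilp} shows that if $\cC$ is nilpotent of order $n$ then $c_1\otimes\cdots\otimes c_{n+1}\simeq 0$ for all $c_i\in\cC$. Hence the conormalized terms vanish in degrees $\ge n$, the tower of partial totalizations is constant past stage $n$, and $\coBar_{\cP}(B)\simeq\Tot_{\le n}([m]\squig\on{S}_{\cP}^{\circ m}(\oblv_{\cP}B))$ is a finite limit. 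Since $\cC$ is stable, a finite limit is equivalently a finite colimit --- a pullback is the fiber of a map of finite biproducts, a fiber is a cofiber shifted by $[-1]$, and finite biproducts, $[-1]$, and cofibers all preserve arbitrary colimits --- so $\coBar_{\cP}$ preserves all colimits, in particular sifted ones, which proves (a).

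\medskip

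For (b), the monad map of Lemma \ref{l:identification of monad} is precisely the canonical comparison measuring the failure of the $\cX^{\Sigma}$-action on $\cC$ to commute with totalizations (cf. the remark following that lemma). Evaluated at $c\in\cC$, its target is $\coBar_{\cP}\triv_{\cP}(c)\simeq\Tot_m\,\overline{\on{S}}_{\cP}^{\,\circ m}(c)$ while its source is $\on{T}_{\cP^{\vee}}(c)=\bigsqcup_{k\ge 1}(\cP^{\vee}(k)\otimes c^{\otimes k})_{\Sigma_k}$ with $\cP^{\vee}=\coBar(\cP)=\Tot_m\,\overline{\cP}^{\,\circ m}$ formed in $\cX^{\Sigma}$, and the map pulls the totalization out past the action. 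By the finiteness just established, only the terms $k\le n$ contribute to $\on{T}_{\cP^{\vee}}(c)$ since $c^{\otimes k}\simeq 0$ for $k>n$; the totalization computing $\coBar_{\cP}\triv_{\cP}(c)$ and, for each such $k$, the totalization computing $\cP^{\vee}(k)$ are finite limits (same reduced-cobar/arity estimate), hence finite colimits in the stable categories $\cC$ and $\cX$; and tensoring with $c^{\otimes k}$, taking $\Sigma_k$-coinvariants, and finite coproducts all commute with colimits. Therefore the comparison map is an equivalence, which gives (b).

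\medskip

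The step I expect to be the main obstacle is the bookkeeping behind the key sentence of the second paragraph: rigorously identifying the conormalization of the cobar cosimplicial object with the iterated reduced comonad, and pinning down the ``tree of depth $m$ with all internal arities $\ge 2$ has at least $m+1$ leaves'' estimate that feeds the vanishing --- in particular ensuring the arity-$1$ part of $\cP$ does not interfere (it does not when $\cP(1)\simeq 1_{\cX}$, the situation in which this lemma is applied). Everything downstream of that estimate is formal, given the stability of $\cC$.
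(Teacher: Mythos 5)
Your strategy is the one the paper itself uses: write $\coBar_{\cP}$ as the totalization of the cobar cosimplicial object, use nilpotence to collapse that totalization to a limit over the finite category $\Delta_{\leq n}$, and then exploit stability of $\cC$ to trade the finite limit for a finite colimit. The only difference is the device for the collapse: the paper asserts that the cosimplicial object becomes $n$-coskeletal, so that $\Tot\simeq \underset{\Delta_{\leq n}}{\lim}$, while you argue via the vanishing of the conormalized terms (equivalently, of the fibers of the $\Tot$ tower) in degrees $\geq n$; these are two phrasings of the same fact, and your tree/arity estimate is precisely the justification the paper leaves implicit. The arity-$1$ worry you raise at the end is legitimate, but it afflicts the paper's coskeletality claim in exactly the same way, and is harmless in the only situation where the lemma is invoked, namely $\cP=\cO^\vee$ with $\cO(1)\simeq 1_{\cX}$ (cf.\ the remark following Proposition \ref{p:duality for nilpotent}).

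There is, however, one step in (a) that fails as written: from ``$\coBar_{\cP}(B)$ is a finite limit, hence a finite colimit'' you conclude that $\coBar_{\cP}$ preserves \emph{all} colimits. That is false, and is precisely why the lemma claims only sifted colimits: the truncated diagram depends on $B$ through the functors $B\squig \on{S}_{\cP}^{\circ m}(\oblv_{\cP}B)$, which are built from tensor powers and $\Sigma_k$-coinvariants and therefore do not preserve coproducts. The correct argument needs two separate inputs, as in the paper: (i) the levelwise functor $\cP\nilpcoalg_{\on{d.p.}}(\cC)\to \cC^{\Delta_{\leq n}}$ preserves \emph{sifted} colimits, because $c\squig c^{\otimes k}$ does; and (ii) the functor $\underset{\Delta_{\leq n}}{\lim}:\cC^{\Delta_{\leq n}}\to\cC$ preserves all colimits by stability and finiteness of $\Delta_{\leq n}$. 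You supply (ii) but omit (i), and your stated conclusion conflates the finite-limit functor with the composite. The repair is a one-line addition. Part (b) is unaffected, since there the comparison is made at a fixed object $c$ and all you need is that the $\cX^{\Sigma}$-action commutes with the finite limits in question, which your finiteness estimate does provide; this matches the paper's argument.
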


\begin{proof}

We prove point (a): 

By construction, the functor $\coBar_{\cP}$ is the composition of a functor
$$\coBar^\bullet_{\cP}:\cP\nilpcoalg_{\on{d.p.}}(\cC)\to \cC^{\Delta},$$
which commutes with sifted colimits (because the $n$-fold tensor power functor commutes with sifted colimits), followed by the functor
$$\on{Tot}:\cC^{\Delta}\to \cC.$$
Here we denote by $\cC^{\Delta}=\Funct(\Delta,\cC)$ the $\oo$-category of cosimplicial 
objects in $\cC$, and $\on{Tot}$ is the functor of taking the limit over $\Delta$.

\medskip

Let $\cC$ be such that all $n$-fold tensor products are equivalent to zero. This
implies that for $A\in \cP\nilpcoalg_{\on{d.p.}}(\cC)$, the natural map
$$\coBar^\bullet_{\cP}(A)\to \on{cosk}^{\leq n}(\coBar^\bullet_{\cP}(A)|_{\Delta_{\leq n}})$$
is a homotopy equivalence, where $\Delta_{\leq n}\subset \Delta$ is the subcategory spanned
by objects of cardinality $\leq n$. Hence,
$$\coBar_{\cP}(A)\simeq \underset{\Delta_{\leq n}}{\lim}\, \coBar^\bullet_{\cP}(A)|_{\Delta_{\leq n}}.$$

As was mentioned above, the functor 
$$A\squig \coBar^\bullet_{\cP}(A)|_{\Delta_{\leq n}}:\cP\nilpcoalg_{\on{d.p.}}(\cC)\longrightarrow \cC^{\Delta_{\leq n}}$$
commutes with sifted colimits. Hence, the assertion follows from the fact that the functor of limit over 
$\Delta_{\leq n}$
$$\cC^{\Delta_{\leq n}}\to \cC$$
commutes with colimits (since $\cC$ is stable and $\Delta_{\leq n}$ is finite, the limit diagram
is equivalent to a colimit one).

\medskip

To prove point (b), let $\on{Cobar}^\bullet(\cP)$ be the canonical cosimplicial object of $\cX^\Sigma$, such that
$$\on{Tot}(\on{Cobar}^\bullet(\cP))\simeq \on{Cobar}(\cP)=:\cP^\vee.$$ Then $\on{T}_{\cP^\vee}$ is given by
$$c\squig \on{Tot}(\on{Cobar}^\bullet(\cP))\cdot (c),$$
and $\on{Cobar}_{\cP}\circ \on{triv}_{\cP}$ is given by
$$c\squig \on{Tot}(\on{Cobar}^\bullet(\cP)\cdot (c)),$$
where $\on{-}\cdot \on{-}$ denotes the action of $\cX^\Sigma$ on $\cC$. 

\medskip

However, as above, the maps
$$\on{Tot}(\on{Cobar}^\bullet(\cP))\cdot (c) \to \left(\underset{\Delta_{\leq n}}{\lim}\, \on{Cobar}^\bullet(\cP)|_{\Delta_{\leq n}}\right)\cdot c$$
and 
$$\on{Tot}(\on{Cobar}^\bullet(\cP)\cdot (c))\to \underset{\Delta_{\leq n}}{\lim}\, \left(\on{Cobar}^\bullet(\cP)\cdot (c)|_{\Delta_{\leq n}}\right)$$
are homotopy equivalences. Thus, the above totalizations are isomorphic to finite limits, and since $\cC$ is stable, also to colimits.
Therefore, the assertion follows from the fact that the action of $\cX$ on $\cC$ and the monoidal operation on $\cC$ commute with
colimits. 

\end{proof}

Since the functor $\oblv_{\cP{}^\vee}$ is conservative and commutes with colimits,
from point (a) of Lemma \ref{l:cobar for nilpotent} we obtain:

\begin{cor} \label{c:coBar for nilp}
The functor $\coBar^{\rm{enh}}_{\cP}: \cP\nilpcoalg_{\on{d.p.}}(\cC)\to \cP{}^\vee\alg(\cC)$
commutes with geometric realizations.
\end{cor}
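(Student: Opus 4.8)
The plan is to deduce this formally from Lemma \ref{l:cobar for nilpotent}(a), the factorization recorded in Corollary \ref{c:identification of monad}, and the conservativity of the forgetful functor on the algebra side. First I would check that $\cP{}^\vee\alg(\cC)$ admits geometric realizations and that $\oblv_{\cP{}^\vee}:\cP{}^\vee\alg(\cC)\to\cC$ creates them. Since $\cP{}^\vee\alg(\cC)$ is by definition the $\oo$-category of modules over the monad $\on{T}_{\cP{}^\vee}$, it is enough to know that $\on{T}_{\cP{}^\vee}$ preserves sifted colimits; this is clear, since $\on{T}_{\cP{}^\vee}$ is assembled from the functors $c\squig(\cP{}^\vee(n)\otimes c^{\otimes n})_{\Sigma_n}$ together with coproducts, and $n$-fold tensor powers distribute over sifted colimits (the siftedness observation used throughout Section \ref{ss:alg and mod}). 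Hence $\oblv_{\cP{}^\vee}$ is conservative and creates sifted colimits, in particular geometric realizations; note also that the source $\cP\nilpcoalg_{\on{d.p.}}(\cC)$ has geometric realizations, computed via the colimit-preserving functor $\oblv_{\cP}$.

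Now, Corollary \ref{c:identification of monad} supplies a canonical identification $\oblv_{\cP{}^\vee}\circ\coBar^{\rm{enh}}_{\cP}\simeq\coBar_{\cP}$ of functors $\cP\nilpcoalg_{\on{d.p.}}(\cC)\to\cC$, and Lemma \ref{l:cobar for nilpotent}(a) says that $\coBar_{\cP}$ commutes with sifted colimits --- this is the one place the nilpotence of $\cC$ enters, through the collapse of the $\Cobar$-totalization to a finite, hence colimit-commuting, limit. I then invoke the standard principle: if $G$ is conservative and preserves colimits of a given shape, if the source of $F$ has such colimits, and if $G\circ F$ preserves them, then $F$ preserves them as well. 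Applying this with $F=\coBar^{\rm{enh}}_{\cP}$ and $G=\oblv_{\cP{}^\vee}$ yields that $\coBar^{\rm{enh}}_{\cP}$ commutes with geometric realizations.

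I do not expect a genuine obstacle here. The substantive input --- that nilpotence forces the $\Cobar$-totalization to be a finite limit, and hence to commute with colimits --- has already been isolated in Lemma \ref{l:cobar for nilpotent}(a), and everything else is the formal detection of colimits through a conservative, colimit-preserving forgetful functor. The only point that warrants a moment's care is confirming that $\oblv_{\cP{}^\vee}$ creates geometric realizations, i.e.\ that the monad $\on{T}_{\cP{}^\vee}$ preserves sifted colimits.
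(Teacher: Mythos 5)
Your proposal is correct and is essentially the paper's own argument: the paper deduces the corollary in one line from Lemma \ref{l:cobar for nilpotent}(a) together with the fact that $\oblv_{\cP{}^\vee}$ is conservative and commutes with the relevant (sifted) colimits, exactly the detection principle you invoke. Your additional verification that $\on{T}_{\cP{}^\vee}$ preserves sifted colimits, so that $\oblv_{\cP{}^\vee}$ creates geometric realizations, is a reasonable explicit supplement to what the paper leaves implicit.
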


\sssec{The functor $\Bar^{\rm{enh}}_\cO$ is fully faithful}

To prove that $\Bar^{\rm{enh}}_\cO$ is fully faithful we need to show that the unit of the adjunction
\begin{equation} \label{e:adj map}
\on{Id}\longrightarrow \coBar^{\rm{enh}}_{\cO^\vee}\circ \Bar^{\rm{enh}}_\cO
\end{equation}
is a homotopy equivalence. 

\medskip

Since every object of $\cO\alg(\cC)$ can be obtained as a geometric realization
of a simplicial object whose terms lie in the essential image of the functor
$$\rm{Free}_\cO:\cC\longrightarrow \cO\alg(\cC),$$
from Corollary \ref{c:coBar for nilp} we obtain that it is enough to show that the map
\begin{equation} \label{e:adj on free}
\rm{Free}_\cO\longrightarrow \coBar^{\rm{enh}}_{\cO^\vee}\circ \Bar^{\rm{enh}}_\cO\circ \rm{Free}_\cO
\end{equation}
is a homotopy equivalence. Again, since the forgetful functor $\oblv_\cO:\cO\alg(\cC)\to \cC$ is
conservative, it is enough to show that the induced map
\begin{equation} \label{e:adj on free and forget}
\oblv_\cO\circ \rm{Free}_\cO\longrightarrow \oblv_\cO\circ \coBar^{\rm{enh}}_{\cO^\vee}\circ \Bar^{\rm{enh}}_\cO\circ \rm{Free}_\cO
\end{equation}
is a homotopy equivalence. 

\medskip

By definition, the functor in the left-hand side of \eqref{e:adj on free and forget} identifies
with $\on{T}_\cO$. We can rewrite the right-hand side of \eqref{e:adj on free and forget} as
$$\coBar_{\cO^\vee}\circ \Bar^{\rm{enh}}_\cO\circ \rm{Free}_\cO.$$
From Diagram \eqref{d:O 3rd}, we obtain a canonical homotopy equivalence of
functors
$$\Bar^{\rm{enh}}_\cO\circ \rm{Free}_\cO\simeq \triv_{\cO^\vee}.$$
Hence, the map in \eqref{e:adj on free and forget} can be thought of as a map
\begin{equation} \label{e:adj on free and forget again}
\on{T}_\cO\longrightarrow \coBar_{\cO^\vee}\circ \triv_{\cO^\vee}.
\end{equation}
However, it is easy to see from the construction that the map in \eqref{e:adj on free and forget again}
equals the composition
$$\on{T}_\cO\longrightarrow \on{T}_{(\cO^\vee)^\vee}\longrightarrow \coBar_{\cO^\vee}\circ \triv_{\cO^\vee},$$
where the first arrow is given by \eqref{e:adj on free and forget}, and the second
arrow is given by Lemma \ref{l:identification of monad}. Hence, the fact that $\cO$ is derived Koszul
(see Remark \ref{r:Koszul for operads}) and Lemma \ref{l:cobar for nilpotent}(b) imply that the map in 
\eqref{e:adj on free and forget again}
is a homotopy equivalence.

\qed

\sssec{Proof of the equivalence}

To prove that the functor $\Bar^{\rm{enh}}_\cO$ is an equivalence, it remains to show
that its right adjoint, namely $\coBar^{\rm{enh}}_{\cO^\vee}$, is conservative. For
that it is sufficient to show that the functor
$$\coBar_{\cP}:\cP\nilpcoalg_{\on{d.p.}}(\cC)\longrightarrow \cC$$
is conservative for a cooperad $\cP$, provided that $\cC$ is nilpotent.

\begin{remark}
Note that Conjecture \ref{conj:full Koszul} would imply that the functor 
$\coBar^{\rm{enh}}_{\cP}$ is conservative for any $\cC$, without the nilpotence
(or pro-nilpotence) assumption.
\end{remark}

\medskip

Let $\cC_i$ be as in Definition \ref{def:nilp}. Let $f_i:\cC\to \cC_i$ denote the
corresponding evaluation functors.

\medskip

Let $\alpha:B_1\to B_2$ be a map in $\cP\nilpcoalg_{\on{d.p.}}(\cC)$
that is not a homotopy equivalence. Let $i$ be the minimal integer such that 
the map
$$f_i(\alpha):f_i(B_1)\to f_i(B_2)$$
is not a homotopy equivalence.

\medskip

For any $B\in \cP\nilpcoalg_{\on{d.p.}}(\cC)$ we have a canonical map
\begin{equation} \label{e:cosk filtr}
\coBar_{\cP}(B)=\underset{\Delta}{\lim}\, \coBar^\bullet_{\cP}(B)\longrightarrow \oblv_\cP(B).
\end{equation}
By the choice of the index $i$, the map 
\begin{equation} \label{e:eval}
f_i\circ \coBar_{\cP}(\alpha):f_i\circ \coBar_{\cP}(B_1)\longrightarrow f_i\circ \coBar_{\cP}(B_2),
\end{equation}
induces a homotopy equivalence
$$\on{coker}\left(\coBar_{\cP}(B_1)\longrightarrow B_1\right)\longrightarrow 
\on{coker}\left(\coBar_{\cP}(B_2)\longrightarrow B_2\right).$$

Hence, the map \eqref{e:eval} is not a homotopy equivalence. Hence $\coBar_{\cP}(\alpha)$
is not a homotopy equivalence, as required.

\qed

\ssec{Coalgebras vs. ind-nilpotent coalgebras in the pro-nilpotent case}  \label{ss:alg and coalg}

In the following, we retain the assumption that $\cC$ is pro-nilpotent:

\begin{prop} \label{p:sum vs prod}
The functors \eqref{e:sum vs products one} and \eqref{e:sum vs products two} are equivalences.
\end{prop}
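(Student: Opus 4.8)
The plan is to identify, over a pro-nilpotent $\cC$, the two $\cX^\Sigma$-actions on $\cC$ that define the source and the target of \eqref{e:sum vs products one} (resp., of \eqref{e:sum vs products two}), and so to realize these forgetful functors as identity functors. The mechanism is that in a \emph{nilpotent} tensor $\oo$-category sufficiently high tensor powers vanish, so that the coproducts occurring in \eqref{e:action of sym seq} and \eqref{e:action 3} are in fact finite and hence agree, via the canonical map, with the products occurring in \eqref{e:action 2} and \eqref{e:action 4}.

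First I would recall that \eqref{e:sum vs products one} (resp., \eqref{e:sum vs products two}) is the functor induced on comodule $\oo$-categories by the canonical natural transformation of $\cX^\Sigma$-actions from \eqref{e:action of sym seq} to the lax action \eqref{e:action 2} (resp., from \eqref{e:action 3} to \eqref{e:action 4}); on underlying endofunctors of $\cC$ it is, for a symmetric sequence $\cS$ and $c\in\cC$, the canonical map $\underset{m\geq 1}\sqcup\,(\cS(m)\otimes c^{\otimes m})_{\Sigma_m}\to\underset{m\geq 1}\Pi\,(\cS(m)\otimes c^{\otimes m})_{\Sigma_m}$ from a coproduct to the corresponding product (and its $\Sigma_m$-invariant analogue). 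Write $\cC\simeq\underset{\BN^{\on{op}}}{\lim}\,\cC_i$ as in Definition \ref{def:nilp}, with evaluation functors $f_i:\cC\to\cC_i$. Since the transition functors commute with both limits and colimits, so do the $f_i$, and, being a limit over $\BN^{\on{op}}$, the family $\{f_i\}$ is jointly conservative. As $f_i$ commutes with $\otimes$, with coproducts and with products, its value on the above comparison map is the analogous comparison map for $f_i(\cS)$ and $f_i(c)$ inside $\cC_i$; so by joint conservativity it suffices to check that this map is a homotopy equivalence in each nilpotent $\oo$-category $\cC_i$.

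For that I would first establish that in a tensor $\oo$-category $\cD$ nilpotent of order $n$ one has $d_1\otimes\cdots\otimes d_m\simeq 0$ for arbitrary objects $d_1,\dots,d_m$ whenever $m\geq n+1$. This should follow by induction on $n$: in the base case $n=1$ one has $\cD_0=0$, so $\on{ker}(f_{1,0})=\cD_1\simeq\cD$, and the third bullet of Definition \ref{def:nilp} says precisely that every $2$-fold tensor product vanishes; for the inductive step, using that $\cD_{n-1}$ is nilpotent of order $n-1$, for $m\geq n+1$ the object $z:=d_1\otimes\cdots\otimes d_{m-1}$ satisfies $f_{n,n-1}(z)\simeq 0$ (an $(m-1)$-fold tensor product in $\cD_{n-1}$ with $m-1\geq n$), whence $z\in\on{ker}(f_{n,n-1})$ and the third bullet gives $z\otimes d_m\simeq 0$. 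Granting this, for any $d\in\cD$ and any symmetric sequence $\cS$ the families $\{(\cS(m)\otimes d^{\otimes m})_{\Sigma_m}\}_{m\geq 1}$ and $\{(\cS(m)\otimes d^{\otimes m})^{\Sigma_m}\}_{m\geq 1}$ are supported in degrees $m\leq n$; since $\cD$ is stable, finite coproducts and products coincide, so the canonical map from the coproduct of such a finitely-supported family to its product is a homotopy equivalence. Taking $\cD=\cC_i$ would complete the reduction of the previous paragraph.

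The hard part will be the one genuinely non-formal point: that the pointwise equivalences so produced assemble into an equivalence of lax monoidal functors $\cX^\Sigma\to\Funct(\cC,\cC)$, hence into an equivalence of the associated $\oo$-categories of comodules. Concretely I would check that, once high tensor powers vanish, the composition-product coherences for the ``product'' actions \eqref{e:action 2} and \eqref{e:action 4} collapse to the distributivity of $\otimes$ over finite coproducts --- automatic in a stable $\oo$-category --- so that the pointwise equivalence is a morphism, and therefore an equivalence, of $\cX^\Sigma$-actions; an equivalence of the acting (lax monoidal) functors then gives an equivalence of comodule categories. Everything else --- the componentwise behaviour of limits and colimits and the joint conservativity of the $f_i$ --- is formal, exactly as in the proof of Proposition \ref{p:duality for nilpotent}.
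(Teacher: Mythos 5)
Your proposal is correct and follows the paper's proof in essence: the paper likewise reduces to the nilpotent case exactly as in Proposition \ref{p:duality for nilpotent} and then observes that the two right lax actions of $\cX^\Sigma$ on a nilpotent $\cC$ are ``tautologically equivalent by the nilpotence condition.'' You have merely unwound what ``tautologically'' means --- the inductive vanishing of $m$-fold tensor products for $m\geq n+1$, the resulting finiteness of the coproducts in \eqref{e:action of sym seq} and \eqref{e:action 3}, and the coincidence of finite coproducts and products in a stable $\oo$-category --- which is a faithful (and somewhat more detailed) rendering of the paper's argument.
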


\begin{proof}

As in the proof of Proposition \ref{p:duality for nilpotent} we immediately reduce to the case
when $\cC$ is nilpotent. In the latter case, in both cases (with or without divided powers), the two 
right lax actions of $\cX^\Sigma$ on $\cC$ are tautologically equivalent by 
the nilpotence condition.

\end{proof}

\ssec{The case of Lie algebras}

\sssec{}

Let $\cX$ be the category $\Vect_k$, where $k$ has characteristic zero. We shall consider the augmented
operad $\Lie$, obtained from the usual (non-unital) Lie operad by formally adjoining the unit. As was
mentioned in Remark \ref{r:Koszul for operads}, the operad $\Lie$ is derived Koszul. 

\sssec{}

Let $\cC$ be a (not necessarily unital) stable symmetric monoidal $\oo$-category, compatibly
tensored over $\Vect_k$. 

\medskip

Let $\Lie\alg(\cC)$ denote the $\oo$-category of Lie algebras in $\cC$, and let $\on{Com}\coalg(\cC)$ denote the
category of non-unital commutative coalgebras on $\cC$. Recall that we have a pair of adjoint functors:
\begin{equation} \label{e:Quillen for Lie general}
\C:\Lie\alg(\cC)\rightleftarrows \on{Com}\coalg(\cC):\prim[-1],
\end{equation}
where $\C$ is the functor of the homological Chevalley complex, and $\prim$ is the (derived) functor of 
primitive elements
(here $[-1]$ stands for the cohomological shift by $1$ to the right, i.e., the loop functor). 

\medskip

We claim that we have proved the following:

\begin{prop} \label{p:Quillen for Lie general}
Assume that $\cC$ is pro-nilpotent. Then the functors in \eqref{e:Quillen for Lie general}
are equivalences of $\oo$-categories.
\end{prop}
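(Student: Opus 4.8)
The plan is to deduce Proposition \ref{p:Quillen for Lie general} from the general Koszul-duality machinery already established, by identifying the adjoint pair \eqref{e:Quillen for Lie general} with the pair \eqref{e:Koszul adj} for the Lie operad. First I would recall that $\Lie$ is an augmented operad with $1_{\cX} \xrightarrow{\sim} \Lie(1)$, so it satisfies the hypothesis of Proposition \ref{p:duality for nilpotent}; in particular it is derived Koszul (Remark \ref{r:Koszul for operads}). Its Koszul dual cooperad $\Lie^\vee := \Bar(\Lie)$ is, up to the appropriate shift, the cocommutative cooperad $\on{Com}^\vee$: this is the classical Koszul-duality statement between the Lie and commutative operads, now interpreted in the $\oo$-categorical framework of Section \ref{ss:Koszul duality}. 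With this identification, $\cO\alg(\cC) = \Lie\alg(\cC)$ and $\cO^\vee\nilpcoalg_{\on{d.p.}}(\cC)$ is the $\oo$-category of ind-nilpotent cocommutative coalgebras with divided powers in $\cC$.

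Next I would match up the functors. The functor $\Bar_\cO : \Lie\alg(\cC) \to \cC$, being the left adjoint of $\triv_\cO$, is by the bar-construction description (Remark following Lemma \ref{l:identification of comonad}) precisely the (reduced, shifted) homological Chevalley complex $\C$, and its enhancement $\Bar^{\rm enh}_\cO$ lands in $\on{Com}\coalg^{\on{nil}}_{\on{d.p.}}(\cC)$; dually, $\coBar_{\cO^\vee}$ is the derived primitives functor $\prim[-1]$. The shift bookkeeping — the $[-1]$ in \eqref{e:Quillen for Lie general} versus the operadic suspension relating $\Lie^\vee$ to $\on{Com}$ — is routine and I would handle it by spelling out the cooperad $\Bar(\Lie)$ explicitly on the relevant arity pieces. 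Having done this, Proposition \ref{p:duality for nilpotent} (applicable since $\cC$ is assumed pro-nilpotent and $\Lie(1) \to 1_\cX$ is an equivalence) immediately gives that $\Bar^{\rm enh}_\cO \rightleftarrows \coBar^{\rm enh}_{\cO^\vee}$ are mutually inverse equivalences between $\Lie\alg(\cC)$ and $\on{Com}\coalg^{\on{nil}}_{\on{d.p.}}(\cC)$.

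Finally I would remove the decorations ``ind-nilpotent'' and ``divided powers'' from the target. Since $\cC$ is pro-nilpotent, Proposition \ref{p:sum vs prod} shows the forgetful functor $\on{Com}\coalg^{\on{nil}}_{\on{d.p.}}(\cC) \to \on{Com}\coalg^{\on{nil}}(\cC)$ is an equivalence (the two right lax actions \eqref{e:action 2} and \eqref{e:action 4} of $\cX^\Sigma$ coincide up to the trace map, which is invertible in characteristic zero, and for nilpotent $\cC$ the sum-versus-product distinction disappears). Likewise, the inclusion $\on{Com}\coalg^{\on{nil}}(\cC) \hookrightarrow \on{Com}\coalg(\cC)$ is an equivalence in the pro-nilpotent case — this is the coalgebra half of the assertion about the inclusions \eqref{e:inclusions} promised in the introduction, and in the nilpotent case it is immediate since every coalgebra is automatically ind-nilpotent. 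Composing these identifications with the equivalence of the previous paragraph yields the desired equivalence $\C : \Lie\alg(\cC) \rightleftarrows \on{Com}\coalg(\cC) : \prim[-1]$.

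The main obstacle I anticipate is not any single deep step but the careful identification of $\Bar(\Lie)$ with the (suitably shifted) cocommutative cooperad together with the bookkeeping of the cohomological shift $[-1]$, so that $\Bar^{\rm enh}_{\Lie}$ is genuinely the classical Chevalley complex functor $\C$ rather than some twist of it; getting the suspension conventions consistent with those of Section \ref{ss:Koszul duality} is where the real work lies. Once that identification is in place, the proposition is a formal consequence of Propositions \ref{p:duality for nilpotent} and \ref{p:sum vs prod}.
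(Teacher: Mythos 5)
Your proposal is correct and follows essentially the same route as the paper: identify $\Lie^\vee$ with the shifted cocommutative cooperad so that $\Bar^{\rm{enh}}_{\Lie}$ becomes $\C[-1]$, apply Proposition \ref{p:duality for nilpotent}, and then strip the divided-powers and ind-nilpotence decorations. The only slip is a swapped citation in your last paragraph: Proposition \ref{p:sum vs prod} is what handles the inclusion $\on{Com}\coalg^{\on{nil}}(\cC)\hookrightarrow \on{Com}\coalg(\cC)$ (sums versus products), while the passage from divided powers to ordinary coalgebras is the characteristic-zero statement about the trace-map functors \eqref{e:inv vs coinv}.
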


\begin{proof}

It is known (see \cite{koszul}, \cite{ching}) that the cooperad $\Lie^\vee$ identifies with $\on{Com}[1]$, i.e., $\on{Com}[1](n)=k[n-1]$
with the sign action of $\Sigma_n$ for every $n$. Moreover, the functor
$$\Bar^{\rm{enh}}_{\Lie}:\Lie\alg(\cC)\longrightarrow \on{Com}[1]\nilpcoalg_{\on{d.p.}}(\cC)\longrightarrow \on{Com}[1]\coalg_{\on{d.p.}}(\cC)\longrightarrow
\on{Com}[1]\coalg(\cC)$$
is the functor $\C[-1]$.

\medskip

From Proposition \ref{p:duality for nilpotent} we obtain an equivalence of $\oo$-categories
$$\Bar^{\rm{enh}}_{\Lie}:\Lie\alg(\cC)\longrightarrow \on{Com}[1]\nilpcoalg_{\on{d.p.}}(\cC).$$

\medskip

By Proposition \ref{p:sum vs prod}, the pro-nilpotence assumption on $\cC$
implies that the functor
$$\on{Com}[1]\nilpcoalg(\cC)\longrightarrow \on{Com}[1]\coalg(\cC)$$
is an equivalence.

\medskip

Due to the characteristic zero assumption, the functor
$$\on{Com}[1]\nilpcoalg_{\on{d.p.}}(\cC)\longrightarrow \on{Com}[1]\nilpcoalg(\cC)$$
is an equivalence as well.

\medskip

Thus, we obtain that $\C\simeq \Bar^{\rm{enh}}_{\Lie}[-1]$ defines an equivalence
$$\Lie\alg(\cC)\longrightarrow \on{Com}\coalg(\cC).$$

\end{proof}

\section{Proof of the main theorem}

\ssec{Koszul duality in the chiral setting}  \label{ss:proof of nilp}

Our current goal is to prove the first part of Theorem \ref{t:main}:

\begin{theorem} \label{t:Koszul}
The above functors 
$$\C^{\ch}:\Lie\alg^{\ch}(\ran X) \rightleftarrows \on{Com}\coalg^{\ch}(\ran X):\prim^{\ch}[-1]$$
are mutually inverse equivalences of $\oo$-categories.
\end{theorem}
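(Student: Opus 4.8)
The plan is to deduce Theorem~\ref{t:Koszul} from Proposition~\ref{p:Quillen for Lie general}. By definition $\Lie\alg^{\ch}(\ran X)=\Lie\alg(\cC)$ and $\on{Com}\coalg^{\ch}(\ran X)=\on{Com}\coalg(\cC)$ for $\cC:=\fD(\ran X)$ equipped with the chiral symmetric monoidal structure (and its tautological tensoring over $\Vect_k$), and under these identifications $\C^{\ch}$ and $\prim^{\ch}[-1]$ are precisely the functors $\C$ and $\prim[-1]$ of~\eqref{e:Quillen for Lie general}. Since the right adjoint of an equivalence is automatically its inverse, it suffices to show that this adjoint pair consists of equivalences; by Proposition~\ref{p:Quillen for Lie general} this holds as soon as $(\fD(\ran X),\otimes^{\ch})$ is pro-nilpotent in the sense of Definition~\ref{def:nilp}. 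So the whole theorem reduces to verifying that $\fD(\ran X)$, with the chiral tensor product, is pro-nilpotent.

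To exhibit the pro-nilpotent structure I would filter the indexing category by cardinality. For $n\geq 0$ let $\fset^{\rm surj}_{\leq n}\subset\fset^{\rm surj}$ be the full subcategory on sets of cardinality $\leq n$, and put
$$\cC_n:=\underset{\fset^{\rm surj}_{\leq n}}{\lim}\,\fD^!(X^{\fset^{\rm surj}}),$$
with transition functor $f_{n,n-1}\colon\cC_n\to\cC_{n-1}$ the restriction along $\fset^{\rm surj}_{\leq n-1}\hookrightarrow\fset^{\rm surj}_{\leq n}$. Then $\cC_0=0$, since the limit is taken over the empty category, and $\fD(\ran X)\simeq\underset{\BN^{\on{op}}}{\lim}\,\cC_n$ because $\fset^{\rm surj}=\bigcup_n\fset^{\rm surj}_{\leq n}$. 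As the diagram $\fD^!(X^{\fset^{\rm surj}})$ takes values in $\ooCat^{\on{st}}_{\on{pres},\on{cont}}$, all of its structure functors commute with limits and with colimits, so limits and colimits in each $\cC_n$ are computed componentwise; hence every $f_{n,n-1}$ commutes with limits and with colimits. It is also useful to observe that $\cC_n$ is the Verdier quotient of $\fD(\ran X)$ by the full stable subcategory $F^{>n}$ of objects $M$ with $(\Delta^I)^!(M)\simeq 0$ for all $|I|\leq n$; then $\ker(f_{n,n-1})=F^{>n-1}/F^{>n}$ is the subcategory of objects supported on configurations of exactly $n$ points.

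The substantive point is the symmetric monoidal structure. By Lemma~\ref{l:ch product on open}, for a finite set $I$ the functor $(\Delta^I)^!$ sends an $m$-fold chiral tensor product $M_1\otimes^{\ch}\cdots\otimes^{\ch}M_m$ to a direct sum, over surjections $I\twoheadrightarrow\{1,\dots,m\}$, of terms of the form $\jmath(\pi)_*\,\jmath(\pi)^*(-)$; in particular it vanishes whenever $|I|<m$. Therefore the subcategories $F^{>n}$ constitute a multiplicative filtration of $(\fD(\ran X),\otimes^{\ch})$ by symmetric monoidal ideals, with $F^{>0}=\fD(\ran X)$ and $\bigcap_n F^{>n}=0$, so $\otimes^{\ch}$ descends to a stable symmetric monoidal structure on each quotient $\cC_n$, compatibly tensored over $\Vect_k$, for which the $f_{n,n-1}$ are symmetric monoidal. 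This refines $\fD(\ran X)\simeq\lim_n\cC_n$ to an equivalence in the $(\oo,1)$-category of stable symmetric monoidal $\oo$-categories compatibly tensored over $\Vect_k$; together with $\cC_0=0$ and the above properties of the $f_{n,n-1}$, this gives the first two conditions of Definition~\ref{def:nilp}. The third condition is again the support estimate: for $M_1\in\ker(f_{n,n-1})$, supported on configurations of exactly $n$ points, and any $M_2\in\cC_n$, the chiral tensor $M_1\otimes^{\ch}M_2$ formed in $\fD(\ran X)$ is supported on configurations with at least $n+1$ points (a disjoint union of a set of size $n$ with a non-empty set), so it lies in $F^{>n}$ and hence vanishes in $\cC_n$. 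This proves that $(\fD(\ran X),\otimes^{\ch})$ is pro-nilpotent, and Theorem~\ref{t:Koszul} then follows from Proposition~\ref{p:Quillen for Lie general}. The topological variant is proved identically, with $\on{Shv}$ in place of $\fD$ throughout.

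The step I expect to be the main obstacle is not the nilpotence estimate --- which is the elementary observation, via Lemma~\ref{l:ch product on open}, that an $m$-fold chiral tensor product is supported on configurations of at least $m$ points --- but the homotopy-coherent bookkeeping underlying the third paragraph: constructing the descended symmetric monoidal structures on the truncations $\cC_n$ and checking that $\fD(\ran X)$ is their limit \emph{as} stable symmetric monoidal $\oo$-categories compatibly tensored over $\Vect_k$, rather than merely as plain $\oo$-categories. Concretely one must lift the cardinality filtration to the level of the right lax symmetric monoidal functor $(X^{\fset^{\rm surj}})^{\ch}$ of Section~\ref{sss:defn of monoidal} --- taking care that $\fset^{\rm surj}_{\leq n}$ is \emph{not} closed under disjoint union, which is precisely why one passes instead to the ideals $F^{>n}$ --- and then propagate the resulting coherences through the ``colimit of a right lax symmetric monoidal functor'' formalism of Section~\ref{ss:defn of conv}. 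Everything else, including the remaining hypotheses of Definition~\ref{def:nilp}, is routine.
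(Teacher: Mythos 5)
Your proposal is correct and follows essentially the same route as the paper: reduce to Proposition \ref{p:Quillen for Lie general} and exhibit the pro-nilpotence of $(\fD(\ran X),\otimes^{\ch})$ via the filtration by the number of points in a configuration, with Lemma \ref{l:ch product on open} supplying both the monoidal-ideal property of the ``supported on $>n$ points'' subcategories and the vanishing condition of Definition \ref{def:nilp}. The only, immaterial, difference is presentational: you define the truncations as limits over $\fset^{\rm surj}_{\leq n}$, whereas the paper realizes them as $\fD(\ran^{\leq n}X)$ built from D-modules on the closed diagonal strata $X^{I,\leq n}\subset X^I$; both are identified with the same Verdier quotient of $\fD(\ran X)$, exactly as in your third paragraph.
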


In view of Proposition \ref{p:Quillen for Lie general}, it suffices to show that the $\oo$-category
$\fD(\ran X)$, equipped with the chiral symmetric monoidal structure, is pro-nilpotent.

\sssec{}

Let $n$ be a positive integer. For a finite set $I$, let $X^{I,\leq n}$ be the closed subscheme
of $X^I$ equal to the union of the images of the diagonal maps $\Delta(\pi):X^J\to X^I$
for all surjections $\pi:I\twoheadrightarrow J$ with $|J|\leq n$. Let $X^{I,>n}\subset X^I$ be the
complementary open subset. Let
$$\imath^{I,n}:X^{I,\leq n}\hookrightarrow X^I \hookleftarrow X^{I,>n}:\jmath^{I,n}$$
denote the corresponding maps.

\medskip

We obtain the functors
$$X^{\Delta,\leq n},X^{\Delta,> n}:(\fset^{\rm surj})^{\on{op}}\longrightarrow \Sch$$
and the corresponding functors
$$\fD^!(X^{\Delta,\leq n}),\fD^!(X^{\Delta,>n}):\fset^{\rm surj}\longrightarrow \ooCat^{\on{st}}.$$

Let $\fD(\ran^{\leq n} X)$ and $\fD(\ran^{> n} X)$ denote the corresponding $\oo$-categories 
$$\underset{\fset^{\rm surj}}{\lim}\, \fD^!(X^{\Delta,\leq n}) \text{ and }
\underset{\fset^{\rm surj}}{\lim}\, \fD^!(X^{\Delta,> n}),$$
respectively.

\sssec{}

For a surjection $\pi:I_1\twoheadrightarrow I_2$, the map $\Delta(\pi):X^{I_1}\to X^{I_2}$
sends 
$$X^{I_1,\leq n}\longrightarrow X^{I_2,\leq n} \text{ and } X^{I_1,>n}\longrightarrow X^{I_2,>n}.$$
Hence, we obtain commutative diagrams of functors
$$
\CD
\fD(X^{I_1,\leq n})   @<{(\imath^{I_1,n})^! }<<  \fD(X^{I_1,n}) @>{(\jmath^{I_1,n})^*}>>  \fD(X^{I_1,>n})  \\
@V{\Delta(\pi)^!}VV   @V{\Delta(\pi)^!}VV   @VV{\Delta(\pi)^!}V  \\
\fD(X^{I_2,\leq n})   @<{(\imath^{I_2,n})^! }<<  \fD(X^{I_2,n}) @>{(\jmath^{I_2,n})^*}>>  \fD(X^{I_2,>n}) 
\endCD
$$
and their adjoints:
$$
\CD
\fD(X^{I_1,\leq n})   @>{(\imath^{I_1,n})_*}>>  \fD(X^{I_1,n}) @<{(\jmath^{I_1,n})_*}<<  \fD(X^{I_1,>n})  \\
@V{\Delta(\pi)^!}VV   @V{\Delta(\pi)^!}VV   @VV{\Delta(\pi)^!}V  \\
\fD(X^{I_2,\leq n})   @>{(\imath^{I_2,n})_*}>>  \fD(X^{I_2,n}) @<{(\jmath^{I_2,n})_*}<<  \fD(X^{I_2,>n}) 
\endCD
$$

So, we obtain adjoint pairs of functors
$$(\imath^{n})_*:\fD(\ran^{\leq n} X)\rightleftarrows \fD(\ran X):(\imath^{n})^! \text{ and }
(\jmath^{n})^*:\fD(\ran X) \rightleftarrows \fD(\ran^{> n} X):(\jmath_{n})_*$$
that commute with the evaluation maps $(\Delta^I)^!$. Moreover, the functors
$(\imath^{n})_*$ and $(\jmath_{n})_*$ are fully faithful, and 
$$\fD(\ran^{\leq n} X)\rightleftarrows \fD(\ran X)\rightleftarrows \fD(\ran^{> n} X)$$
is a short exact sequence of stable $\oo$-categories: I.e., the category on the
right is the localization of the category in the middle with respect to the category
on the left. 

\medskip

Similarly, we have the corresponding maps and functors for any pair $n_1\leq n_2$.

\begin{lemma}
The functor
$$\{(\imath^n)^!\}:\fD(\ran X)\longrightarrow \underset{n}{\lim}\, \fD(\ran^{\leq n} X)$$
is an equivalence.
\end{lemma}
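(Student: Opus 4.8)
The plan is to identify both the source and the target of $\{(\imath^n)^!\}$ with the single limit $\underset{\fset^{\rm surj}}{\lim}\,\fD^!(X^{\fset^{\rm surj}})=\fD(\ran X)$, and to recognize the functor in the statement as the resulting comparison map. The only inputs are that limits commute with limits and that, for a fixed finite set, the tower $n\squig X^{I,\leq n}$ is eventually constant.

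First I would record the stabilization. For a fixed $I\in\fset^{\rm surj}$ one has $X^{I,\leq n}=X^I$ as soon as $n\geq|I|$, since the identity surjection $\id:I\twoheadrightarrow I$ already contributes all of $X^I$ to the union defining $X^{I,\leq n}$. Hence for $|I|\leq n_1\leq n_2$ the closed embedding $X^{I,\leq n_1}\hookrightarrow X^{I,\leq n_2}$ is an isomorphism, so in the tower $n\squig\fD(X^{I,\leq n})$ --- a functor $\BN^{\on{op}}\to\ooCat^{\on{st}}$ whose transition functors are the corresponding $!$-pullbacks --- the transition functors are equivalences for $n\geq|I|$. A tower of $\oo$-categories whose transition functors are eventually equivalences has limit its stable value, and here the cone induced by the $!$-pullbacks $(\imath^{I,n})^!:\fD(X^I)\to\fD(X^{I,\leq n})$ exhibits that limit; so I obtain a canonical equivalence $\underset{n}{\lim}\,\fD(X^{I,\leq n})\weq\fD(X^I)$, natural in $I$ with respect to the functors $(\Delta^\pi)^!$, the naturality being precisely the commutativity of the diagrams displayed just above the statement.

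Next I would assemble over $\fset^{\rm surj}$. The rule $(n,I)\squig\fD(X^{I,\leq n})$ is a functor $\BN^{\on{op}}\times\fset^{\rm surj}\to\ooCat^{\on{st}}$; since the forgetful functor $\ooCat^{\on{st}}_{\on{pres},\on{cont}}\to\ooCat^{\on{st}}$ preserves limits, every limit below may be computed in $\ooCat^{\on{st}}$, and a limit over a product of index $\oo$-categories is an iterated limit in either order. Therefore
\begin{multline*}
\underset{n}{\lim}\,\fD(\ran^{\leq n}X)=\underset{n}{\lim}\,\underset{\fset^{\rm surj}}{\lim}\,\fD^!(X^{\Delta,\leq n})\\
\weq\underset{\fset^{\rm surj}}{\lim}\,\underset{n}{\lim}\,\fD^!(X^{\Delta,\leq n})\weq\underset{\fset^{\rm surj}}{\lim}\,\fD^!(X^{\fset^{\rm surj}})=\fD(\ran X),
\end{multline*}
where the middle equivalence of the second line is the previous paragraph applied objectwise in $\fset^{\rm surj}$.

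Finally I would check that this composite is the functor $\{(\imath^n)^!\}$ of the statement: by construction $(\imath^n)^!:\fD(\ran X)\to\fD(\ran^{\leq n}X)$ is induced on $\fset^{\rm surj}$-limits by the family $(\imath^{I,n})^!:\fD(X^I)\to\fD(X^{I,\leq n})$, and these are exactly the legs of the cone from the stable value $\fD(X^I)$ to the $n$-th term of the tower above, hence correspond to the projections out of $\underset{n}{\lim}(-)$ under the iterated-limit identification. The argument is soft and I do not anticipate a genuine obstacle; the points needing care are keeping variances straight when invoking the interchange of limits for the bifunctor $(n,I)\squig\fD(X^{I,\leq n})$, the (harmless) passage between limits computed in $\ooCat^{\on{st}}$ and in $\ooCat^{\on{st}}_{\on{pres},\on{cont}}$, and the final matching of the abstract equivalence with $\{(\imath^n)^!\}$.
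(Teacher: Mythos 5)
Your argument is correct and is exactly the paper's proof, spelled out in full: the paper disposes of the lemma in one line by observing that each $(\imath^{I,n})^!$ is an equivalence once $n\geq|I|$, and your interchange of the limits over $\BN^{\on{op}}$ and $\fset^{\rm surj}$, together with the identification of the eventually constant tower with its stable value, is precisely the implicit content of that remark. The additional care you take with variances and with matching the abstract equivalence to $\{(\imath^n)^!\}$ is sound and fills in nothing beyond what the paper intends.
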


\begin{proof}
This follows from the fact that each $(\imath^{I,n})^!$ is an equivalence
as soon as $n\geq |I|$.
\end{proof}

\sssec{}

From Lemma \ref{l:ch product on open} we obtain that for any $n$, the essential image
of $\fD(\ran^{> n} X)$ under $(\jmath_{n})_*$ is a monoidal ideal with respect to the chiral
symmetric monoidal structure on $\fD(\ran X)$, i.e., the product of any object with an object in the essential image of $\fD(\ran^{> n} X)$ remains in 
the essential image of $\fD(\ran^{> n} X)$.\footnote{This is a special case of a general notion of ideals of algebras in a pointed monoidal $\oo$-category, 
where a map $I \ra A$ of nonunital algebras is said to be an ideal if the quotient is equivalent to the quotient as objects, without algebraic structure.
For a map $I \ra A$, which is a \emph{monomorphism} (see \cite{topos}, Sect. 5.5.6), this is equivalent to requiring that the resulting maps
$I\otimes A\to A$ and $A\otimes I\to A$ factor through $I$. We are applying this to the monoidal category $\ooCat_{\on{pres}}^{\on{st}}$ and a 
functor $\cC'\to \cC$ 
which is fully faithful, which is equivalent to being a monomorphism.} As a consequence, the localization of $\fD(\ran X)$ with respect to 
$\fD(\ran^{> n} X)$ obtains a monoidal structure, such that the localization functor is a homomorphism of monoidal categories.

\medskip

The localization of $\fD(\ran X)$ with respect $\fD(\ran^{> n} X)$ is equivalent to $\fD(\ran^{\leq n} X)$; hence, we obtain that
$\fD(\ran^{\leq n} X)$ acquires a canonical symmetric monoidal structure, for which
the functors
$$\fD(\ran^{\leq n_1} X)\longrightarrow \fD(\ran^{\leq n_2} X)$$
for $n_1\geq n_2$ are symmetric monoidal for the chiral symmetric monoidal structure on 
$\fD(\ran X)$.

\medskip

To establish the pro-nilpotence property of the chiral symmetric monoidal structure on 
$\fD(\ran X)$ it suffices to show that the resulting monoidal structure on $\fD(\ran^{\leq n} X)$ 
vanishes on
$$\on{ker}\left(\fD(\ran^{\leq n} X)\longrightarrow \fD(\ran^{\leq n-1} X)\right)\otimes \fD(\ran^{\leq n} X)\longrightarrow
\fD(\ran^{\leq n} X).$$

However, the latter is manifest from Lemma \ref{l:ch product on open}.

\ssec{Factorization}  \label{ss:factorization}

We shall now prove the second part of the main theorem, that the equivalence between chiral Lie algebras 
and chiral commutative coalgebras on $\ran X$ interchanges the $\oo$-subcategories of chiral Lie
algebras on $X$ and factorization coalgebras.

\begin{theorem} \label{t:factorization}
For $A\in \Lie\alg^{\ch}(\ran X)$, the corresponding coalgebra $\C^{\ch}(A)$ factorizes if and only
if $A$ is supported on $X$, i.e., is an object of $\Lie\alg^{\ch}(X)$.
\end{theorem}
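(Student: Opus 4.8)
The plan is to analyze how the Chevalley complex $\C^{\ch}(A)$ interacts with the two geometric conditions in play — ``supported on the main diagonal $X\subset\ran X$'' and ``factorizing'' — by passing to fibers over configurations of points and invoking the explicit description of the chiral tensor product in Lemma \ref{l:ch product on open}. Recall that for a Lie algebra $A$ the complex $\C^{\ch}(A)$ has an increasing filtration whose associated graded is built from chiral symmetric powers $\on{Sym}^n_{\ch}(A[1])$, i.e. from $\jmath(\pi)_*\jmath(\pi)^*$ applied to external powers of $A$; concretely, by Lemma \ref{l:ch product on open}, over $X^I$ one has
\[
(\Delta^I)^!\!\left(\bigotimes{}^{\ch,\,n} A\right)\;\simeq\;\bigoplus_{\pi:I\twoheadrightarrow \{1,\dots,n\}}\jmath(\pi)_*\jmath(\pi)^*\!\left(A^{\boxtimes\{1,\dots,n\}}\text{ pulled back along }\pi\right),
\]
the sum over surjections onto an $n$-element set. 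The key structural observation is that when $A=(\Delta^{\on{main}})_*(A_X)$ for $A_X\in\fD(X)$, each factor $(\Delta^{I_j})^!(A)$ appearing in these expressions is the $!$-pullback of $A_X$ along $X^{I_j}\to X$, so the whole of $(\Delta^I)^!\C^{\ch}(A)$ is assembled, stratum by stratum along the diagonal stratification of $X^I$, out of external tensor powers of $A_X$ glued by $\jmath_*\jmath^*$ across disjoint loci — which is exactly the shape of the factorization structure maps \eqref{e:factorization map}. So the ``if'' direction should reduce to checking that the comparison map \eqref{e:factorization map} for $B=\C^{\ch}(A)$ becomes, after restriction to $U(\pi)$, the identity on a direct sum of external tensor products; here the disjointness forces all ``colliding'' contributions (summands with $S_1\cap S_2\neq\emptyset$) to be killed by $\jmath(\pi)^*$, and what survives is precisely the chiral product computed separately on the two factors. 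This is essentially a bookkeeping argument with the combinatorics of surjections $I\twoheadrightarrow J$ refining a given partition, combined with the Cousin/diagonal stratification of $X^I$.

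For the ``only if'' direction I would argue contrapositively: suppose $A$ is \emph{not} supported on $X$, so that $(\Delta^{\{1,2\}})^!(A)$ is nonzero away from the diagonal $X\subset X^2$. Using the filtration on $\C^{\ch}(A)$, the first nontrivial term beyond the linear one is the chiral square, and by Lemma \ref{l:ch product on open} its value over $X^2$ has a summand supported on the diagonal coming from $(\Delta^{\on{main}})^!$-type contributions of $A$ itself together with a summand $\jmath_*\jmath^*(A^{(1)}\boxtimes A^{(2)})$ off the diagonal. The factorization map \eqref{e:factorization map} over the disjoint locus $U(\pi)$ for $\pi:\{1,2\}\twoheadrightarrow\{1\}$ versus $\pi=\on{id}$ compares $(\Delta^{\{1\}})^!\C^{\ch}(A)$ restricted suitably against $(\Delta^{\{1\}})^!\C^{\ch}(A)\boxtimes(\Delta^{\{1\}})^!\C^{\ch}(A)$; the obstruction to this being an equivalence is precisely the ``extra'' off-diagonal part of $A$ that is not seen by the main-diagonal restriction. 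More efficiently, I expect the cleanest route is: factorization of $B$ is equivalent to the statement that $B$, as a chiral coalgebra, is ``free-like'' in the sense that $\C^{\ch}$ applied to its Koszul dual (which exists and is $\prim^{\ch}(B)[-1]$ by Theorem \ref{t:Koszul}) has linear term supported on $X$; then invoke that $\prim^{\ch}$ is the inverse equivalence, so $B$ factorizes iff $\prim^{\ch}(B)[-1]$ is supported on $X$, and apply this to $B=\C^{\ch}(A)$ to get $A=\prim^{\ch}(\C^{\ch}(A))[-1]$ supported on $X$. The real content is then to prove the intrinsic characterization: \emph{$B\in\on{Com}\coalg^{\ch}(\ran X)$ factorizes if and only if its cobar/primitive complex is supported on the main diagonal}, and here the forward implication uses that factorization makes the reduced diagonal of $B$ ``coincide with external product off the diagonal,'' which forces the cofiber of the comultiplication (whose shifted primitives compute $\prim^{\ch}$) to live on $X$.

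I would organize the write-up as follows. First, reduce both directions to a statement at the level of $!$-restrictions to $X^I$ for all finite $I$, since $\fD(\ran X)=\lim_I\fD(X^I)$ and both ``supported on $X$'' and ``factorizing'' are detected pointwise/stratumwise. Second, write down explicitly, via the filtration on $\C^{\ch}$ and Lemma \ref{l:ch product on open}, the value $(\Delta^I)^!\C^{\ch}(A)$ as a complex whose terms are indexed by surjections $I\twoheadrightarrow J$ and built from $\jmath(\sigma)_*\jmath(\sigma)^*$ of external products of the $(\Delta^{I_j})^!A$. Third, for the ``if'' direction, substitute $A=(\Delta^{\on{main}})_*A_X$, observe that each building block is a pullback of $A_X$, and check directly that \eqref{e:factorization map} restricted to $U(\pi)$ matches term-by-term — the disjointness kills all cross-terms and the surjection combinatorics split as a product over $J$. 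Fourth, for the ``only if'' direction, use the equivalence of Theorem \ref{t:Koszul} to write $A\simeq\prim^{\ch}(\C^{\ch}(A))[-1]$ and show that factorization of $B=\C^{\ch}(A)$ implies its primitive complex is concentrated on $X$ — the cleanest formulation being that factorizability forces the ``off-diagonal'' part of the reduced comultiplication to be an isomorphism onto the external square, so the cofiber defining $\prim^{\ch}$ is supported on the diagonal. The main obstacle I anticipate is the second, ``only if,'' direction: extracting support information about $\prim^{\ch}(B)$ from the factorization equivalences requires controlling the whole cobar tower of $B$, not just its low-degree terms, and making the inductive argument over the stratification $X^{I,\leq n}$ precise — essentially one must show that if $\prim^{\ch}(B)$ had support strictly larger than $X$, the first stratum witnessing this would already break one of the equivalences \eqref{e:factorization map}. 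This should follow from the pro-nilpotent filtration $\fD(\ran X)=\lim_n\fD(\ran^{\leq n}X)$ constructed in Section \ref{ss:proof of nilp}, compatibly with both the chiral tensor structure and the Chevalley functor, so that the check can be done one ``diagonal layer'' at a time where everything is a finite colimit of external products.
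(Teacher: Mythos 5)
Your ``if'' direction is essentially the paper's argument: restrict to $X^I$, use the filtration on $\C^{\ch}(A)$ whose associated graded is built from $\on{Sym}^{k,\ch}(A[1])$, substitute $A=(\Delta^{\on{main}})_*(A_X)$, and observe via Lemma \ref{l:ch product on open} that after applying $\jmath(\id_I)^*$ only the top graded piece $k=|I|$ survives and identifies with $\jmath(\id_I)^*(A_X^{\boxtimes I}[|I|])$. One point you elide: the paper first shows, by induction on $|I|$, that it suffices to verify the equivalences \eqref{e:factorization map} for $\pi=\id_I$ only; you should make that reduction explicit, since your term-by-term check takes place on $U(\id_I)$.

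The ``only if'' direction is where your proposal has a genuine gap. The route you call cleanest --- that $B$ factorizes iff $\prim^{\ch}(B)[-1]$ is supported on $X$ --- is, given the equivalence of Theorem \ref{t:Koszul}, merely a restatement of the theorem; it buys nothing unless the ``intrinsic characterization'' is proved independently, and your sketch of that step is not correct: $\prim^{\ch}(B)$ is computed by the full cobar construction, not by the cofiber of a single comultiplication, so controlling its support would require the whole cobar tower --- exactly the obstacle you flag at the end. Your direct route also starts from a false premise: $A$ not being supported on $X$ does not imply that $(\Delta^{\{1,2\}})^!(A)$ is nonzero off the diagonal; the first witness may occur only at larger $I$. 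The paper's actual argument is the direct one executed with the correct minimality, and it is much shorter than you anticipate: choose $I$ of minimal cardinality $\geq 2$ with $\jmath(\id_I)^*\bigl((\Delta^I)^!(A)\bigr)\neq 0$. By minimality, after applying $\jmath(\id_I)^*$ only the graded pieces $k=1$ and $k=|I|$ of the Chevalley filtration survive (any intermediate $k$ involves a factor $(\Delta^{I_j})^!(A)$ with $2\leq |I_j|<|I|$, which minimality kills on the disjoint locus), the $k=1$ piece is $\jmath(\id_I)^*\bigl((\Delta^I)^!(A)\bigr)\neq 0$, and the factorization map \eqref{e:factorization for ch} is precisely the projection onto the $k=|I|$ piece, hence has nonzero fiber and is not an equivalence. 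No induction over the pro-nilpotent filtration of $\fD(\ran X)$ and no control of the cobar tower is needed; the conclusion is an immediate consequence of the same filtration you already used for the ``if'' direction.
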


\medskip

We shall precede the proof of Theorem \ref{t:factorization} by the following two
observations made in Sections \ref{sss:reduction to simple factorization} and
\ref{sss:filtration on Chevalley course}, respectively.

\sssec{}  \label{sss:reduction to simple factorization}

Recall that the factorization condition for $B\in \on{Com}\coalg^{\ch}(\ran X)$ says that 
for each surjection $\pi:I \fib J$ the associated map 
\begin{equation} \label{e:factorization}
\jmath(\pi)^*\left((\Delta^I)^!(B)\right) \longrightarrow 
\jmath(\pi)^* \left(\underset{j\in J}\boxtimes (\Delta^{I_j})^!(B)\right)
\end{equation}
is a homotopy equivalence in $\fD(U(\pi))$.

\medskip

However, we claim that it is enough to check \eqref{e:factorization} for every $I$ and $\pi=\id_I$. 
Indeed, let us assume by induction that the homotopy equivalences \eqref{e:factorization} have been
established for finite sets of cardinality $<k$, and let $I$ be with $|I|=k$. 

\medskip

First, we claim that the induction hypothesis implies that \eqref{e:factorization} becomes
an ismorphism after applying $(\Delta^{\phi})^!\circ \jmath(\pi)_*$ for any
$\phi:I\twoheadrightarrow I'$ with $|I'|<|I|$. Indeed, set $J':=J\underset{I}\sqcup I'$,
and let 
$$\psi:J\twoheadrightarrow J' \text{ and } \phi':I'\twoheadrightarrow J'$$
denote the corresponding maps. We have:
$$(\Delta^{\phi})^!\circ \jmath(\pi)_*\simeq \jmath(\pi')_*\circ (\Delta^{\psi})^!,$$
and thus the situation reduces to that on $X^{I'}$. 

\medskip

Thus, it is sufficient to show that \eqref{e:factorization} becomes
a homotopy equivalence after applying the functor $\jmath(\id_I)^*\circ \jmath(\pi)_*$. However,
in this case the left-hand side becomes $\jmath(\id_I)^*\left((\Delta^I)^!(B)\right)$
while the right-hand side becomes
$$\jmath(\id_I)^*\left(\underset{j\in J}\boxtimes \jmath(\id_{I^j})_*\circ \jmath(\id_{I^j})^*(\Delta^{I_j})^!(B)\right),$$
which by the assumption maps isomorphically to
$$\jmath(\id_I)^*\left(\underset{j\in J}\boxtimes \jmath(\id_{I^j})_*\circ 
\jmath(\id_{I^j})^*\left(\left((\Delta^{\on{main}})^!(B)\right)^{\boxtimes I^j}\right)\right)
\simeq \jmath(\id_I)^*\left(\left((\Delta^{\on{main}})^!(B)\right)^{\boxtimes I}\right).$$
Hence, the map in question becomes the map 
$$\jmath(\id_I)^*\left((\Delta^I)^!(B)\right) \longrightarrow \jmath(\id_I)^*
\left(\left((\Delta^{\on{main}})^!(B)\right)^{\boxtimes I}\right),$$
i.e., the map \eqref{e:factorization} for $\pi=\id_I$.

\sssec{}  \label{sss:filtration on Chevalley course}

The second observation needed for the proof of Theorem \ref{t:factorization} is the
canonical filtration on $\C^{\ch}(A)$ as an object of $\fD(\ran X)$.

\medskip

Let $\cC$ be a stable symmetric monoidal category as in Section  \ref{ss:alg and mod},
and let $L$ be an object of $\Lie\alg(\cC)$. By the construction of the Chevalley complex,
the object $\oblv_{\on{Com}}(\C(L))\in \cC$ carries a canonical filtration indexed by positive
integers with subquotients described as follows:
$$\on{gr}^k\left(\C(L)\right)\simeq \on{Sym}^k_{\cC}(\oblv_{\Lie}(L)[k]).$$

\medskip

We will apply it to $\cC=\fD(\ran X)$ equipped with the chiral symmetric monoidal structure.

\sssec{}

For future use,  let us describe explicitly the object $\on{Sym}^{k,\ch}(M)\in \fD(\ran X)$ for
$M\in \fD(\ran X)$:

\medskip

For a finite set $I$, we have
$$(\Delta^I)^!(\on{Sym}^{k,\ch}(M))\simeq \left(\underset{\pi:I\twoheadrightarrow \{1,...,k\}}\oplus\, 
\jmath(\pi)_*\circ \jmath(\pi)^*\left(\underset{j\in \{1,...,k\}}\boxtimes\, (\Delta^{I_j})^!(M)\right)\right)_{\Sigma_k}.$$

\medskip

Let us consider two particular cases: For $k=1$, we have 
$$(\Delta^I)^!(\on{Sym}^{1,\ch}(M))\simeq (\Delta^I)^!(M).$$

\medskip

Suppose now that $M$ is supported on $X\subset \ran X$, i.e., if it is of the form $M\simeq (\Delta^{\on{main}})_*(M_X)$
for some $M_X\in \fD(X)$. We have that $(\Delta^I)^!(\on{Sym}^{k,\ch}(M))$ is zero unless $|I|=k$, and for
$I$ with $|I|=k$
$$(\Delta^I)^!(\on{Sym}^{k,\ch}(M))\simeq 
\jmath(\id_I)_*\circ \jmath(\id_I)^*(M_X^{\boxtimes I}).$$

\sssec{Proof of Theorem \ref{t:factorization}, the ``if" direction}

Let us first show that if $A$ is supported on $X$, then $\C^{\ch}(A)$ factorizes. By Section  
\ref{sss:reduction to simple factorization}, we need to show that the map 
\begin{equation} \label{e:factorization for ch}
\jmath(\id_I)^*\left((\Delta^I)^!(\C^{\ch}(A))\right) \longrightarrow \jmath(\id_I)^*
\left(\left((\Delta^{\on{main}})^!(\C^{\ch}(A))\right)^{\boxtimes I}\right)
\end{equation}
is a homotopy equivalence.   

\medskip

Let us denote by $A_X$ the D-module on $X$ such that $A=(\Delta^{\on{main}})_*(A_X)$.
Consider the canonical filtration on $\oblv_{\on{Com}}^{\ch}(\C^{\ch}(A))$ of Section  \ref{sss:filtration on Chevalley course}.

We obtain that the functor $\jmath(\id_I)^*$ annihilates all $\on{gr}^k\left((\Delta^I)^!(\C^{\ch}(A))\right)$
except for one with $k=|I|$, and in the latter case we have
$$\jmath(\id_I)^*\left(\on{gr}^k\left((\Delta^I)^!(\C^{\ch}(A))\right)\right)
\simeq \jmath(\id_I)^*(A_X^{\boxtimes I}[|I|]).$$
(In particular, for $k=1$, the map $A_X[1]\longrightarrow(\Delta^{\on{main}})^!(\C^{\ch}(A))$
is a homotopy equivalence.) 

\medskip

Under these identifications, the map of \eqref{e:factorization for ch} becomes
the map
$$\jmath(\id_I)^*\left((\Delta^I)^!(\C^{\ch}(A))\right) \simeq 
\jmath(\id_I)^*\left(\on{gr}^k\left((\Delta^I)^!(\C^{\ch}(A))\right)\right)\simeq 
\jmath(\id_I)^*\left(A_X^{\boxtimes I}[|I|]\right).$$

\qed

\sssec{Proof of Theorem \ref{t:factorization}, the ``only if" direction}

Let us now prove the implication in the opposite direction. Assume that $A\in \Lie\alg^{\ch}(\ran X)$
is such that the underlying D-module is not supported on $X$. Let us show that $\C^{\ch}(A)$ does
not factorize. 

\medskip

By assumption, there exists a finite set $I$ with $|I|\geq 2$,
such that $\jmath(\id_I)^*\left((\Delta^I)^!(A)\right)\neq 0$. Let us take $I$ to be of minimal cardinality among such. 

\medskip

Consider the canonical filtration on $(\Delta^I)^!(\C^{\ch}(A))$, and the induced filtration on 
$$\jmath(\id_I)^*\left((\Delta^I)^!(\C^{\ch}(A))\right).$$ 
By assumption, the only non-vanishing terms of 
$\jmath(\id_I)^*\left(\on{gr}^k\left((\Delta^I)^!(\C^{\ch}(A))\right)\right)$ occur for $k=1$ and $k=|I|$
with the former being canonically isomorphic to $\jmath(\id_I)^*\left((\Delta^I)^!(A)\right)$,
and the latter to
$$\jmath(\id_I)^*\left(\left((\Delta^{\on{main}})^!(\C^{\ch}(A))\right)^{\boxtimes I}\right).$$

\medskip

The map \eqref{e:factorization for ch}, identifies with the map
$$\jmath(\id_I)^*\left((\Delta^I)^!(\C^{\ch}(A))\right)\to 
\jmath(\id_I)^*\left(\on{gr}^k\left((\Delta^I)^!(\C^{\ch}(A))\right)\right),$$
which is not a homotopy equivalence, since it annihilates the first term of the filtration.

\qed

\section{Chiral envelopes of $\star$-Lie algebras}  \label{s:enveloping}

\ssec{The basic commutative diagram}

\sssec{}

By construction, we have a natural map $\otimes^{\star}\to \otimes^{\ch}$ between the two 
symmetric monoidal
structures on $\fD(\ran X)$. More precisely, the identity functor on $\fD(\ran X)$ is a left lax 
symmetric monoidal structure, when viewed as a functor from $\fD(\ran X)$ equipped with the $\star$ 
symmetric monoidal structure to $\fD(\ran X)$ equipped with the chiral monoidal structure.

\medskip

For an operad $\cO$ (resp., cooperad $\cP$) we let $\oblv^{\ch\to \star}_\cO$
(resp., $\oblv_\cP^{\star\to \ch}$) denote the corresponding forgetful functors
$$\cO\alg^{\ch}(\ran X)\to \cO\alg^{\star}(\ran X)  \text{ and }
\cP\coalg^{\star}(\ran X)\to \cP\coalg^{\ch}(\ran X).$$
Both of these functors commute with the forgetful functors to $\fD(\ran X)$. 

\medskip

In particular, we obtain a natural forgetful functor 
\begin{equation} \label{e:forgetful Lie ch to star}
\oblv^{\ch\to \star}_{\Lie}: \Lie\alg^{\ch}(\ran X)\ra \Lie\alg^{\star}(\ran X).
\end{equation}

The above functor is easily seen to commute with limits (since on both sides 
the forgetful functor to $\fD(\ran X)$ is conservative and commutes with limits).
Since the categories involved are presentable, we obtain that the functor in 
\eqref{e:forgetful Lie ch to star} admits a left adjoint.  We denote the resulting
left adjoint functor 
$$\Lie\alg^{\star}(\ran X)\longrightarrow \on{Lie}\alg^{\ch}(\ran X),$$
by $\on{Ind}^{\star\to\ch}_{\Lie}$.

\medskip

Our basic observation is the following:
\begin{prop} \label{p:mantra}
We have a commutative diagram of functors
\begin{equation} \label{diag:mantra}
\CD
\Lie\alg^{\ch}(\ran X)  @>{\C^{\ch}}>>  \on{Com}\coalg^{\ch}(\ran X)  \\
@A{\on{Ind}^{\star\to\ch}_{\Lie}}AA    @AA{\oblv_{\on{Com}}^{\star\to\ch}}A   \\
\Lie\alg^{\star}(\ran X)  @>{\C^{\star}}>>  \on{Com}\coalg^{\star}(\ran X)
\endCD
\end{equation}
\end{prop}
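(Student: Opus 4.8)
The plan is to exploit the fact that all four functors in \eqref{diag:mantra} are compatible with the forgetful functors to $\fD(\ran X)$, and that the Chevalley functor $\C$ is built functorially out of the tensor structure. Recall from Section \ref{ss:Koszul duality} (specifically Corollary \ref{c:identification of comonad} applied to the Lie operad, together with the identification $\Lie^\vee\simeq\on{Com}[1]$ used in the proof of Proposition \ref{p:Quillen for Lie general}) that $\C$ is the functor $\Bar^{\rm{enh}}_{\Lie}[1]$, and that $\oblv_{\on{Com}}\circ\C$ agrees with $\Bar_{\Lie}[1]$, i.e. with the bar construction: for a Lie algebra $L$ the underlying object of $\C(L)$ is computed as the geometric realization $|\Bar(1,\Lie,L)_\bullet|$, whose terms involve only the symmetric-sequence action of $\Lie$ on the ambient tensor category. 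The key structural point is that this bar construction, and the commutative-coalgebra structure on it furnished by Corollary \ref{c:identification of comonad}, are produced functorially in the symmetric monoidal category $\cC$, hence are natural with respect to lax symmetric monoidal functors. Since $\id_{\fD(\ran X)}$, viewed from the $\star$-structure to the chiral structure, is lax symmetric monoidal (the map $\otimes^\star\to\otimes^{\ch}$), this naturality is exactly what produces the commutative square.

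First I would make precise the general lemma: if $F\colon\cC\to\cD$ is a lax symmetric monoidal functor between stable symmetric monoidal $\oo$-categories compatibly tensored over $\Vect_k$, then for the Lie operad one has a canonical homotopy-commutative diagram
\begin{equation*}
\CD
\Lie\alg(\cC)  @>{\C}>>  \on{Com}\coalg(\cC)  \\
@V{\Lie\alg(F)}VV    @VV{\on{Com}\coalg(F)}V   \\
\Lie\alg(\cD)  @>{\C}>>  \on{Com}\coalg(\cD)
\endCD
\end{equation*}
compatible with the forgetful functors to $\cC$ and $\cD$. In the present situation, though, the relevant functor on Lie algebras is $\on{Ind}^{\star\to\ch}_{\Lie}$, which is the \emph{left adjoint} to the forgetful functor $\oblv^{\ch\to\star}_{\Lie}$, rather than $\Lie\alg$ of a lax monoidal functor going the other way. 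So the second step is to pass to adjoints: the lemma above, applied to the lax monoidal identity functor $(\fD(\ran X),\otimes^{\ch})\to(\fD(\ran X),\otimes^\star)$ — wait, the direction is $\star\to\ch$ on the colax side — gives a diagram relating $\C^\star$, $\C^{\ch}$, $\oblv^{\ch\to\star}_{\Lie}$ and $\oblv^{\star\to\ch}_{\on{Com}}$. Concretely: naturality of the bar construction for the colax symmetric monoidal identity functor yields a commutative square
\begin{equation*}
\CD
\Lie\alg^{\ch}(\ran X)  @>{\C^{\ch}}>>  \on{Com}\coalg^{\ch}(\ran X)  \\
@V{\oblv^{\ch\to\star}_{\Lie}}VV    @AA{\oblv_{\on{Com}}^{\star\to\ch}}A   \\
\Lie\alg^{\star}(\ran X)  @>{\C^{\star}}>>  \on{Com}\coalg^{\star}(\ran X),
\endCD
\end{equation*}
where one checks that a colax structure on $\id$ covariantly induces a map $\oblv_{\on{Com}}^{\star\to\ch}\circ\C^\star\to\C^{\ch}\circ\oblv^{\ch\to\star}_{\Lie}$ of functors $\Lie\alg^{\ch}(\ran X)\to\on{Com}\coalg^{\ch}(\ran X)$. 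Then, passing to the left adjoint $\on{Ind}^{\star\to\ch}_{\Lie}$ of $\oblv^{\ch\to\star}_{\Lie}$ and using that $\C^{\ch}$, $\C^\star$ are themselves left adjoints (hence the mate/Beck–Chevalley transformation is available), I would transpose this natural transformation into the asserted commutativity of \eqref{diag:mantra}. Finally I would upgrade the resulting natural transformation to an equivalence by checking it after applying $\oblv_{\on{Com}}^{\ch}$, which is conservative: on underlying objects both composites $\oblv_{\on{Com}}^{\ch}\circ\C^{\ch}\circ\on{Ind}^{\star\to\ch}_{\Lie}$ and $\oblv_{\on{Com}}^{\ch}\circ\oblv_{\on{Com}}^{\star\to\ch}\circ\C^\star$ reduce — using the filtration of Section \ref{sss:filtration on Chevalley course}, whose subquotients are symmetric powers $\on{Sym}^{k,\ch}$ versus $\on{Sym}^{k,\star}$ of the common underlying D-module — to a comparison that is already built into the definition of $\on{Ind}^{\star\to\ch}_{\Lie}$ as the left adjoint, namely that $\oblv^{\ch}\circ\on{Ind}^{\star\to\ch}_{\Lie}\circ\Free^{\star}_{\Lie}\simeq\oblv^{\ch}\circ\Free^{\ch}_{\Lie}\circ\oblv^{\star\to\ch}$, and then bootstrap to all Lie algebras by resolving by free ones and using that $\C$ commutes with geometric realizations (the enhanced bar functor does, by the discussion after Corollary \ref{c:identification of comonad}).

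The main obstacle I anticipate is the adjoint-transposition step: producing the diagram \eqref{diag:mantra} directly is subtle because $\on{Ind}^{\star\to\ch}_{\Lie}$ is only characterized by a universal property, so one cannot simply chase it; one must instead obtain a noncanonical-looking transformation $\C^{\ch}\circ\on{Ind}^{\star\to\ch}_{\Lie}\to\oblv_{\on{Com}}^{\star\to\ch}\circ\C^\star$ (or its reverse) from the colax structure and then prove it is an equivalence. The cleanest route is probably to argue entirely on the level of the bar constructions: since $\oblv_{\on{Com}}^{\ch}\C^{\ch}L\simeq|\Bar(1,\Lie,L)_\bullet|$ computed in $(\fD(\ran X),\otimes^{\ch})$ and the simplicial terms of $\Bar(1,\Lie,-)_\bullet$ for the free-resolution model of $\on{Ind}^{\star\to\ch}_{\Lie}L'$ involve only $\otimes^{\ch}$ applied to the underlying D-module of $L'$, one gets a termwise identification with the $\star$-bar construction of $L'$ mapped through $\otimes^\star\to\otimes^{\ch}$; the compatibility of these identifications with the coalgebra structures is exactly the content of Corollary \ref{c:identification of comonad}'s functoriality, which the authors have already emphasized (Remark after Diagram \eqref{d:O 2nd}) is automatic in the $\oo$-categorical setting. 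So the real work is bookkeeping the two universal properties against each other, not any homotopy-coherence problem.
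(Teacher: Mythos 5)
Your overall strategy --- produce the comparison map as a mate of a square built from the two forgetful functors, then verify it is an equivalence on free Lie algebras and extend by geometric realizations --- is genuinely different from the paper's, and it could probably be made to work, but as written it has two concrete soft spots. First, the verification step via the filtration of Section \ref{sss:filtration on Chevalley course} rests on a false premise: there is no ``common underlying D-module.'' The functor $\on{Ind}^{\star\to\ch}_{\Lie}$ changes the underlying object of $\fD(\ran X)$ (it is an enveloping-algebra-type construction; see Corollary \ref{c:PBW}), so the associated graded of $\C^{\ch}(\on{Ind}^{\star\to\ch}_{\Lie}L)$ consists of chiral symmetric powers of $\oblv_{\Lie}(\on{Ind}^{\star\to\ch}_{\Lie}L)$, not of $\oblv_{\Lie}(L)$, and these are not termwise comparable to the $\star$-symmetric powers of $\oblv_{\Lie}(L)$; the content of the proposition is precisely that the totals agree even though the filtrations do not match term by term. (Your parallel argument via free algebras, using $\on{Ind}^{\star\to\ch}_{\Lie}\circ\Free^{\star}_{\Lie}\simeq\Free^{\ch}_{\Lie}$ and $\Bar^{\rm enh}\circ\Free\simeq\triv$, does not need the filtration and is the version to keep.) Second, and more seriously, the construction of the transformation $\oblv_{\on{Com}}^{\star\to\ch}\circ\C^{\star}\to\C^{\ch}\circ\on{Ind}^{\star\to\ch}_{\Lie}$ \emph{as a map of coalgebra-valued functors} is exactly where the work lies, and you defer it to a ``functoriality of Corollary \ref{c:identification of comonad}'' that is nowhere established and is not automatic: one must show that the termwise comparison of bar constructions is compatible with the comonad coactions furnishing the $\on{Com}\coalg$-enhancement. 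That coherence problem is what Section \ref{ss:monad recollections} exists to solve.

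The paper's own proof avoids both issues and, in particular, never has to check that a mate is an equivalence. It observes that the square of \emph{right} adjoints, $\oblv^{\ch\to\star}_{\Lie}\circ\triv^{\ch}_{\Lie}\simeq\triv^{\star}_{\Lie}$, commutes by construction (this is just the compatibility of the monad homomorphism $\on{T}^{\star}_{\Lie}\to\on{T}^{\ch}_{\Lie}$ with augmentations, i.e., the top square of \eqref{e:extended diagram monads}); passing to left adjoints then gives the equivalence $\Bar^{\ch}_{\Lie}\circ\on{Ind}^{\star\to\ch}_{\Lie}\simeq\Bar^{\star}_{\Lie}$ for free, and the only remaining point is to match the two comonad coactions, which is handled by the homomorphism of comonads constructed in Section \ref{ss:monad recollections}. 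This is Lemma \ref{l:mantra general}, applied with $F=G=\on{Id}_{\fD(\ran X)}$, $M_{\cC}=\on{T}^{\star}_{\Lie}$ and $M_{\cD}=\on{T}^{\ch}_{\Lie}$. If you want to salvage your route, the honest path is to first establish something equivalent to that lemma; once you have it, the free-resolution bootstrap becomes unnecessary.
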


\ssec{Recollections on monads}  \label{ss:monad recollections}

For the proof of Proposition \ref{p:mantra} we need to recall several facts about calculus of monads. 
The general reference for this material is \cite{dag}, Sect. 6.2.

\sssec{}  \label{sss:monad univ ppty}

Recall that for a category $\cC$, a monad
$M$ acting on $\cC$ is, by definition, a unital associative algebra in the monoidal category $\on{Funct}(\cC,\cC)$
of endo-functors on $\cC$. 

\medskip

The monoidal category $\on{Funct}(\cC,\cC)$ acts on $\cC$, so it makes sense to
talk about $M$-modules in $\cC$; we denote this category by $\on{Mod}_M$. 
We shall denote by $\oblv_M$ the forgetful functor $\on{Mod}_M\to \cC$, and by $\ind_M$ its left adjoint.

\medskip

Let $F:\cC\to \cD$ (resp., $G:\cD\to \cC$)
be a functor. There is a natural notion of right (resp., left) action of a monad $M$ on $F$ (resp., $G$):
We view $\on{Funct}(\cC,\cD)$ (resp., $\on{Funct}(\cD,\cC)$) as a right (resp., left) module over
$\on{Funct}(\cC,\cC)$. 

\medskip

If $G$ is the right adjoint of $F$, then the data of action of $M$ on $F$ is equivalent to
that of action of $M$ on $G$. 

\medskip

Moreover, a datum of action of $M$ on $G$ is equivalent to factoring $G$ as a 
composition 
$$\cD\overset{G'}\to \on{Mod}_M\overset{\oblv_M}\longrightarrow \cC.$$
Similarly, a datum of action of $M$ on $F$ is equivalent to factoring $F$ as a composition
$$\cC\overset{\ind_M}\longrightarrow \on{Mod}_M \overset{F'}\to \cD.$$

\sssec{}  \label{sss:best approx}

For an adjoint pair 
$$F:\cC\rightleftarrows \cD:G$$ as above, there exists a universal monad on $\cC$
that acts on $F$ (or, equivalently, on $G$). As a plain endo-functor on $\cC$, this monad is isomorphic
to $G\circ F$. Thus, we can view this construction as endowing $G\circ F$ with a structure of monad. 

\medskip

By the universal property, a datum of action of a monad $M$ on $F$ (resp., $G$) is equivalent to
that of homomorphism of monads $M\to G\circ F$.

\medskip

By Section \ref{sss:monad univ ppty}, the identity map on the monad $G\circ F$ yields a canonical factorization of the functor $G$ as
$$\cD\overset{G^{\on{enh}}}\longrightarrow \on{Mod}_{G\circ F}\overset{\oblv_{G\circ F}}\longrightarrow \cC.$$

\medskip

Thus, we can view the category $\on{Mod}_{G\circ F}$ as ``the best approximation" to $\cD$ from
the point of view of $\cC$. 

\medskip

For the sake of completeness, let us also mention that the Barr-Beck-Lurie
theorem gives a necessary and sufficient condition on the functor $G$, for the resulting
functor $G^{\on{enh}}$ to be an equivalence.

\sssec{}

Let 
$$F:\cC\rightleftarrows \cD:G$$ be as above, and let $M_\cD$ be a monad on $\cD$. We can view
the functor $G\circ M_\cD\circ F$ as the composition of $\ind_{M_\cD}\circ F$ with its right adjoint.
Hence, the above procedure endows $G\circ M_\cD\circ F$ with a structure of monad. 

\medskip

If $M_\cC$ is a monad on $\cC$,
a datum of homomorphism $M_\cC\to G\circ M_\cD\circ F$ is equivalent to a datum of action of
$M_\cC$ on the composition $G\circ \oblv_{M_\cD}$, and hence to that of a commutative
diagram
\begin{equation}  \label{e:G_M}
\CD
\on{Mod}_{M_\cC}  @<{G_M}<<  \on{Mod}_{M_\cD}  \\
@V{\oblv_{M_\cC}}VV    @VV{\oblv_{M_\cD}}V  \\
\cC   @<{G}<<  \cD.
\endCD
\end{equation}
Under such circumstances, we shall denote by $\on{Ind}^F_M$ the left adjoint of $G_M$, which makes
the following diagram commutative:
$$ 
\CD
\on{Mod}_{M_\cC}  @>{\on{Ind}^F_M}>>  \on{Mod}_{M_\cD}  \\
@A{\ind_{M_\cC}}AA    @AA{\ind_{M_\cD}}A  \\
\cC   @>{F}>>  \cD.
\endCD
$$

\sssec{}

The above facts render to the world of comonads by reversing the arrows.

\sssec{}

Let $F:\cC\rightleftarrows \cD:G$, $M_\cC$ and $M_\cD$ be as above. Assume now that both $M_\cC$ and
$M_\cD$ are augmented, and assume that the datum of homomorphism $M_\cC\to G\circ M_\cD\circ F$ is compatible
with the augmentations. This equivalent to extending the diagram \eqref{e:G_M} to a commutative diagram
\begin{equation} \label{e:extended diagram monads}
\CD
\cC   @<{G}<<  \cD \\
@V{\on{triv}_{M_\cC}}VV   @VV{\on{triv}_{M_\cD}}V  \\
\on{Mod}_{M_\cC}  @<{G_M}<<  \on{Mod}_{M_\cD}  \\
@V{\oblv_{M_\cC}}VV    @VV{\oblv_{M_\cD}}V  \\
\cC   @<{G}<<  \cD,
\endCD
\end{equation}
where $\on{triv}_{M_\cC}$ (resp., $\on{triv}_{M_\cD}$) is the functor corresponding to the augmentation on
$M_\cC$ (resp., $M_\cD$).

\medskip

Let $N_\cC$ be the Koszul dual comonad, i.e., the one corresponding to the adjoint pair of functors
$$\Bar_{M_\cC}:\on{Mod}_{M_\cC}\rightleftarrows \cC:\on{triv}_{M_\cC}.$$
By Section \ref{sss:best approx}, the functor $\Bar_{M_\cC}$ canonically factors
as 
$$\on{Mod}_{M_\cC} \overset{\Bar^{\on{enh}}_{M_\cC}}\longrightarrow \on{Comod}_{N_\cC}\overset{\oblv_{N_\cC}}
\longrightarrow \cC,$$
and similarly for the monad $M_\cD$ acting on $\cD$.

\medskip

We claim that we have a natural homomorphism of comonads $F\circ N_\cC\circ G \to N_\cD$. Indeed, defining
such homomorphism is equivalent to making the comonad $N_\cD$ coact on the functor
$\on{triv}_{M_\cC}\circ G$. However, the latter functor is isomorphic to $G_M\circ \on{triv}_{M_\cD}$,
and $\on{triv}_{M_\cD}$ is canonically coacted on by $N_\cD$. 

\medskip

Thus, we obtain a commutative diagram of functors
$$
\CD
\on{Comod}_{N_\cC}  @>{F_N}>> \on{Comod}_{N_\cD}   \\
@V{\oblv_{N_{\cC}}}VV    @VV{\oblv_{N_{\cD}}}V  \\
\cC  @>{F}>>  \cD.
\endCD
$$

In the above circumstances, we claim:

\begin{lemma} \label{l:mantra general}
The following diagram of functors canonically commutes:
$$
\CD
\on{Mod}_{M_\cC} @>{\on{Ind}^F_M}>>   \on{Mod}_{M_\cD} \\
@V{\Bar^{\on{enh}}_{M_\cC}}VV    @VV{\Bar^{\on{enh}}_{M_\cD}}V  \\  
\on{Comod}_{N_\cC}   @>{F_N}>>  \on{Comod}_{N_\cD}.
\endCD
$$
\end{lemma}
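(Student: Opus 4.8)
The plan is to obtain the square of Lemma \ref{l:mantra general} as a formal consequence of the single commutative diagram \eqref{e:extended diagram monads}, by passing to left adjoints and invoking the universal properties recalled in Sections \ref{sss:monad univ ppty} and \ref{sss:best approx}. The guiding idea is that all the structure in play --- the induced functor $\on{Ind}^F_M$, the comonad homomorphism $F\circ N_\cC\circ G\to N_\cD$, and the enhancements $\Bar^{\rm{enh}}$ --- has already been manufactured out of \eqref{e:extended diagram monads} in the preceding discussion, so the asserted compatibility should come down to bookkeeping with mates of that one diagram.

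\emph{Reduction to underlying functors.} Every functor occurring in \eqref{e:extended diagram monads} admits a left adjoint: the functors $\oblv$ and $\on{triv}$ always do (namely $\Free$ and $\Bar$), $G$ does by hypothesis on $F$, and $G_M$ does by construction, its left adjoint being the functor $\on{Ind}^F_M$ appearing in the statement. Replacing each functor by its left adjoint turns \eqref{e:extended diagram monads} into a commutative diagram whose upper square reads
$$F\circ\Bar_{M_\cC}\simeq\Bar_{M_\cD}\circ\on{Ind}^F_M$$
as functors $\on{Mod}_{M_\cC}\to\cD$. Now compose the square of Lemma \ref{l:mantra general} with the conservative forgetful functor $\oblv_{N_\cD}:\on{Comod}_{N_\cD}\to\cD$. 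Using $\oblv_{N_\cD}\circ\Bar^{\rm{enh}}_{M_\cD}\simeq\Bar_{M_\cD}$, $\oblv_{N_\cD}\circ F_N\simeq F\circ\oblv_{N_\cC}$, and $\oblv_{N_\cC}\circ\Bar^{\rm{enh}}_{M_\cC}\simeq\Bar_{M_\cC}$, both composites become the functor $\Bar_{M_\cD}\circ\on{Ind}^F_M\simeq F\circ\Bar_{M_\cC}$. Fixing this identification, it remains only to check that the two $N_\cD$-coactions that $\Bar^{\rm{enh}}_{M_\cD}\circ\on{Ind}^F_M$ and $F_N\circ\Bar^{\rm{enh}}_{M_\cC}$ put on this common functor agree; the universal property of $\on{Comod}_{N_\cD}$ (the comonad version of Section \ref{sss:best approx}) then upgrades the equivalence of underlying functors to the desired equivalence in $\on{Comod}_{N_\cD}$.

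\emph{The two coactions.} By definition of $\Bar^{\rm{enh}}_{M_\cD}$, the first coaction is the canonical $N_\cD$-coaction on $\Bar_{M_\cD}$ --- the one induced by the unit of $\Bar_{M_\cD}\dashv\on{triv}_{M_\cD}$, i.e.\ $\Bar_{M_\cD}\to\Bar_{M_\cD}\circ\on{triv}_{M_\cD}\circ\Bar_{M_\cD}=N_\cD\circ\Bar_{M_\cD}$ --- precomposed with $\on{Ind}^F_M$. By the construction of $F_N$ preceding the lemma, the second coaction is the $N_\cD$-coaction on $F\circ\oblv_{N_\cC}$ produced from the comonad homomorphism $F\circ N_\cC\circ G\to N_\cD$ (equivalently, from the induced $2$-cell $F\circ N_\cC\Rightarrow N_\cD\circ F$) together with the tautological $N_\cC$-coaction on $\oblv_{N_\cC}$, then restricted along $\Bar^{\rm{enh}}_{M_\cC}$, hence an $N_\cD$-coaction on $F\circ\Bar_{M_\cC}$. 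To identify the two, I would recall that the comonad homomorphism $F\circ N_\cC\circ G\to N_\cD$ was itself defined via the canonical $N_\cD$-coaction on $\on{triv}_{M_\cC}\circ G\simeq G_M\circ\on{triv}_{M_\cD}$, inherited through $G_M$ from the canonical $N_\cD$-coaction on $\on{triv}_{M_\cD}$; passing to left adjoints in the pertinent squares of \eqref{e:extended diagram monads}, exactly as above, carries this datum to the canonical $N_\cD$-coaction on $\Bar_{M_\cD}\circ\on{Ind}^F_M$, which is the first coaction.

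\emph{Main obstacle.} The real work is precisely this last matching of coactions: everything is assembled from the (co)unit $2$-cells of the adjunctions $\Bar\dashv\on{triv}$, $\Free\dashv\oblv$, $F\dashv G$ and $\on{Ind}^F_M\dashv G_M$, and one has to verify that the two recipes for an $N_\cD$-coaction on $F\circ\Bar_{M_\cC}\simeq\Bar_{M_\cD}\circ\on{Ind}^F_M$ --- ``induce the algebra up and take its Koszul-dual comodule'' versus ``take the Koszul-dual comodule and push it forward'' --- agree up to coherent homotopy. This is formal but not vacuous. The point is that every piece of coherence data passes through the single commutative diagram \eqref{e:extended diagram monads}, so the verification reduces to manipulating mates of that diagram rather than to any new input; the remaining ingredients --- conservativity of $\oblv_{N_\cD}$, existence of the relevant left adjoints, and the passage to left adjoints --- are routine.
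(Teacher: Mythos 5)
Your proposal is correct and follows essentially the same route as the paper: pass to left adjoints in the top square of \eqref{e:extended diagram monads} to identify the underlying functors $F\circ \Bar_{M_\cC}\simeq \Bar_{M_\cD}\circ \on{Ind}^F_M$, then reduce the lemma to matching the two $N_\cD$-coactions on this common functor, which is settled by tracing both back to the construction of the comonad homomorphism $F\circ N_\cC\circ G\to N_\cD$. The only cosmetic difference is that the paper checks the agreement of coactions on the right-adjoint side (the composite $\cD\to \on{Mod}_{M_\cC}$) rather than directly on the left-adjoint side, but this is the same mate-passing argument.
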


\begin{proof}
The diagram
\begin{equation} \label{e:reduced diagram}
\CD
\on{Mod}_{M_\cC} @>{\on{Ind}^F_M}>>   \on{Mod}_{M_\cD} \\
@V{\Bar_{M_\cC}}VV    @VV{\Bar_{M_\cD}}V  \\  
\cC   @>{F}>>  \cD
\endCD
\end{equation} 
naturally commutes, being obtained from the top square in \eqref{e:extended diagram monads}, i.e.,
\begin{equation} \label{e:reduced diagram right}
\CD
\on{Mod}_{M_\cC} @<{G_M}<<   \on{Mod}_{M_\cD} \\
@A{\on{triv}_{M_\cC}}AA   @AA{\on{triv}_{M_\cD}}A  \\
\cC   @<<{G}<  \cD.
\endCD
\end{equation}
by taking the left adjoints. 

\medskip

Thus, we need to show that the two coactions of $N_\cD$ on the resulting functor 
$$\on{Mod}_{M_\cC} \to \cD$$
corresponding to the two circuits in the diagram \eqref{e:reduced diagram} are homotopy equivalent.
This is, in turn, equivalent to showing that the the two coactions on the composed functor
$\cD\to \on{Mod}_{M_\cC}$ in \eqref{e:reduced diagram right} are homotopy equivalent. 
However, the latter follows from the construction
of the homomorphism of comonads $F\circ N_\cC\circ G \to N_\cD$.

\end{proof}

\sssec{Proof of Proposition \ref{p:mantra}}

To prove Proposition \ref{p:mantra}, we apply Lemma \ref{l:mantra general} to $\cC=\cD=\fD(\ran X)$
with $M_\cC$ being the monad $\on{T}^\star_{\Lie}$ and $M_\cD$ being the monad $\on{T}^{\ch}_{\Lie}$,
and $F$ being the identity functor.

\qed

\ssec{Chiral homology of chiral envelopes} 

\sssec{}  \label{sss:direct image}

Let $f:X\to Y$ be a map of schemes. The presentation of $\fD(\ran X)$ as in \eqref{e:Ran as colimit}
defines a functor
$$(f^{\ran})_*:\fD(\ran X)\longrightarrow \fD(\ran Y),$$
via $(f^I)_*:\fD(X^I)\to \fD(Y^I)$ for $I\in \fset^{\rm surj}$. The next lemma results from the definitions:

\begin{lemma}  \label{l:chiral homology}
The functor $(f^{\ran})_*$ has a natural symmetric monoidal functor with respect to the $\star$ symmetric
monoidal structure on $\fD(\ran X)$ and $\fD(\ran Y)$.
\end{lemma}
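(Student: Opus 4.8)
The plan is to recognize $(f^{\ran})_*$ as the colimit over $(\fset^{\rm surj})^{\op}$ of a symmetric monoidal natural transformation, and then to invoke functoriality of the construction ``colimit of a right lax symmetric monoidal functor is a commutative algebra object,'' recalled in Section \ref{ss:defn of conv}. First I would unwind the definition of $(f^{\ran})_*$ from Section \ref{sss:direct image}: through the presentation \eqref{e:Ran as colimit} of $\fD(\ran X)$ and $\fD(\ran Y)$, the functor $(f^{\ran})_*$ is by construction the colimit over $(\fset^{\rm surj})^{\op}$ of the natural transformation
$$\fD^*(X^{\fset^{\rm surj}})\Rightarrow \fD^*(Y^{\fset^{\rm surj}}),\qquad I\squig \bigl((f^I)_*:\fD(X^I)\to\fD(Y^I)\bigr).$$
Recall also that the $\star$ symmetric monoidal structure on $\fD(\ran X)$, and likewise on $\fD(\ran Y)$, is obtained (Section \ref{ss:defn of conv}) by viewing $(\fset^{\rm surj})^{\op}$ as symmetric monoidal under disjoint union, $\ooCat^{\on{st}}_{\on{pres},\on{cont}}$ as symmetric monoidal under tensor product of $\oo$-categories, and $\fD^*(X^{\fset^{\rm surj}}) = \fD^{\blackdiamond}\circ (X^{\fset^{\rm surj}})^\star$ as a (genuine, not merely right lax) symmetric monoidal functor; its colimit then acquires the structure of a commutative algebra object, that is, of a symmetric monoidal $\oo$-category.

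Next I would promote the assignment $I\squig (f^I:X^I\to Y^I)$ to a symmetric monoidal natural transformation $X^{\fset^{\rm surj}}\Rightarrow Y^{\fset^{\rm surj}}$ of symmetric monoidal functors $(\fset^{\rm surj})^{\op}\to \Sch^*$: under the canonical identifications $X^{I_1\sqcup I_2}\simeq X^{I_1}\times X^{I_2}$ one has $f^{I_1\sqcup I_2}\simeq f^{I_1}\times f^{I_2}$, together with the analogous compatibilities for the unit and the higher coherence data. Composing with the symmetric monoidal functor $\fD^{\blackdiamond}|_{\Sch^*}=\fD^*$ then produces a symmetric monoidal natural transformation $\fD^*(X^{\fset^{\rm surj}})\Rightarrow\fD^*(Y^{\fset^{\rm surj}})$; in concrete terms this is the statement that the Künneth comparison maps $(f^{I_1\sqcup I_2})_*(M_1\boxtimes M_2)\to (f^{I_1})_*M_1\boxtimes (f^{I_2})_*M_2$ are homotopy equivalences, which is part of the theory of D-modules assumed in Section \ref{sss:Dmod}.

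Finally I would apply $\underset{(\fset^{\rm surj})^{\op}}{\colim}(-)$: this operation sends a symmetric monoidal natural transformation between right lax symmetric monoidal functors into $\ooCat^{\on{st}}_{\on{pres},\on{cont}}$ to a morphism of the associated commutative algebra objects, and a morphism of commutative algebra objects in $\ooCat^{\on{st}}_{\on{pres},\on{cont}}$ is exactly the datum of a colimit-preserving symmetric monoidal functor. Applying this to the transformation of the previous paragraph endows $(f^{\ran})_*$ with the sought symmetric monoidal structure for the $\star$ structures, and compatibility of the colimit presentation with the evaluation functors $(\Delta^I)_*$ identifies the underlying functor with $(f^{\ran})_*$. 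The bulk of the argument is bookkeeping; the only genuine input is the symmetric monoidality of $\fD^*$ on $\Sch^*$ (the Künneth formula for $*$-pushforward), which is assumed, and the point one must handle with care is the $\oo$-categorical coherence of the colimit construction under morphisms of lax symmetric monoidal functors, for which one appeals to the paradigm of Section \ref{ss:defn of conv}. An equivalent and more hands-on route, should one prefer it, is to use the explicit description of the tensor product functors in Section \ref{ss:defn of conv expl} and verify directly that $(f^{\ran})_*$ intertwines the external products $\boxtimes$ and the disjoint-union functors $m_J$.
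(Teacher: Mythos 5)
Your proof is correct and is precisely the unwinding the paper intends: the paper offers no argument beyond ``results from the definitions,'' and your route --- presenting $(f^{\ran})_*$ as the colimit of the symmetric monoidal natural transformation $I\squig (f^I)_*$ and invoking the paradigm of Section \ref{ss:defn of conv} together with the K\"unneth equivalence packaged into the symmetric monoidality of $\fD^\blackdiamond$ --- is exactly how the definitions deliver the statement.
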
 

\sssec{}

Let us take in the previous setup $Y=\on{pt}$.
We shall denote the resulting symmetric monoidal functor 
$\fD(\ran X)\to \Vect_k$ by 
$$\Gamma_{\rm DR}\left(\ran X,-\right).$$

Being symmetric monoidal, this functor gives rise to a functor
$$\Gamma_{\rm DR}\left(\ran X,-\right)_\cO:\cO\alg(\fD(\ran X))\to \cO\alg(\Vect_k)$$
for any operad $\cO$ and
$$\Gamma_{\rm DR}\left(\ran X,-\right)_\cP:\cP\coalg(\fD(\ran X))\to \cP\coalg(\Vect_k)$$
for any cooperad $\cP$.

\sssec{}

Let us recall from \cite{bd}, Sect. 4.2, that the functor of \emph{chiral homology}
$$\underset{X}\int:\Lie\alg^{\ch}(\ran X)\to \Vect_k$$
is by definition the composition
$$\Lie\alg^{\ch}(\ran X)\overset{\C^{\ch}}\longrightarrow \on{Com}\coalg^{\ch}(\ran X)
\overset{\oblv^{\ch}_{\on{Com}}}\longrightarrow \fD(\ran X)\overset{\Gamma_{\rm DR}\left(\ran X,-\right)}
\longrightarrow \Vect_k.$$

\sssec{}

We shall now prove the following:

\begin{prop}  \label{p:envelopes} The following diagram of functors
$$
\CD
\Lie\alg^\star(\ran X)  @>{\on{Ind}^{\star\to\ch}_{\Lie}}>>  \Lie\alg^{\ch}(\ran X) \\
@V{\Gamma_{\rm DR}(\ran X,)_{\Lie}}VV   @VV{\underset{X}\int}V  \\
\Lie\alg(\Vect_k)   @>{\oblv_{\on{Com}}\circ \C}>>  \Vect_k
\endCD
$$
is canonically commutative.
\end{prop}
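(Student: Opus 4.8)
The plan is to deduce the proposition from Proposition~\ref{p:mantra} together with the fact that the symmetric monoidal functor $\Gamma_{\rm DR}(\ran X,-)$ of Lemma~\ref{l:chiral homology} intertwines the $\star$-Chevalley functor $\C^\star$ with the Chevalley functor $\C$ on $\Vect_k$. No genuine computation is involved: the argument is entirely an exercise in the monadic formalism of Section~\ref{ss:monad recollections}, run twice.

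\emph{Step 1: rewrite the clockwise circuit.} By the definition of chiral homology recalled above, $\underset{X}\int\simeq\Gamma_{\rm DR}(\ran X,-)\circ\oblv^{\ch}_{\on{Com}}\circ\C^{\ch}$, so the clockwise composite in the diagram is $\Gamma_{\rm DR}(\ran X,-)\circ\oblv^{\ch}_{\on{Com}}\circ\C^{\ch}\circ\on{Ind}^{\star\to\ch}_{\Lie}$. Proposition~\ref{p:mantra} supplies a homotopy equivalence $\C^{\ch}\circ\on{Ind}^{\star\to\ch}_{\Lie}\simeq\oblv^{\star\to\ch}_{\on{Com}}\circ\C^{\star}$. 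Since $\oblv^{\ch}_{\on{Com}}$ and $\oblv^{\star\to\ch}_{\on{Com}}$ both commute with the forgetful functors to $\fD(\ran X)$, their composite is $\oblv^{\star}_{\on{Com}}$, and the clockwise circuit becomes $\Gamma_{\rm DR}(\ran X,-)\circ\oblv^{\star}_{\on{Com}}\circ\C^{\star}$.

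\emph{Step 2: commute $\Gamma_{\rm DR}$ past $\C^{\star}$.} By Lemma~\ref{l:chiral homology} with $Y=\on{pt}$, the functor $\Gamma_{\rm DR}(\ran X,-)\colon\fD(\ran X)\to\Vect_k$ is symmetric monoidal for the $\star$-structures, and, being continuous, it commutes with the geometric realizations that build the bar/Chevalley construction; in particular it admits a right adjoint. Apply Lemma~\ref{l:mantra general} with $\cC=\fD(\ran X)$ equipped with the $\star$-structure, $\cD=\Vect_k$, $F=\Gamma_{\rm DR}(\ran X,-)$, $M_\cC=\on{T}^{\star}_{\Lie}$ and $M_\cD=\on{T}_{\Lie}$, the monad homomorphism $M_\cC\to G\circ M_\cD\circ F$ being that induced by the monoidal structure on $F$ (exactly as in the proof of Proposition~\ref{p:mantra}, where $F$ was $\id$). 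Using $\Bar^{\rm enh}_{\Lie}[-1]=\C$ and $\Lie^{\vee}=\on{Com}[1]$ as in the proof of Proposition~\ref{p:Quillen for Lie general}, the conclusion of Lemma~\ref{l:mantra general} is the canonically commuting square
$$
\CD
\Lie\alg^{\star}(\ran X)  @>{\Gamma_{\rm DR}(\ran X,-)_{\Lie}}>>  \Lie\alg(\Vect_k)  \\
@V{\C^{\star}}VV    @VV{\C}V   \\
\on{Com}\coalg^{\star}(\ran X)  @>{\Gamma_{\rm DR}(\ran X,-)_{\on{Com}}}>>  \on{Com}\coalg(\Vect_k).
\endCD
$$
Post-composing with $\oblv_{\on{Com}}$ and using that a symmetric monoidal functor commutes with the forgetful functor off commutative coalgebras, i.e. $\Gamma_{\rm DR}(\ran X,-)\circ\oblv^{\star}_{\on{Com}}\simeq\oblv_{\on{Com}}\circ\Gamma_{\rm DR}(\ran X,-)_{\on{Com}}$, identifies the output of Step~1 with $\oblv_{\on{Com}}\circ\C\circ\Gamma_{\rm DR}(\ran X,-)_{\Lie}$, which is the counterclockwise circuit. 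This proves the proposition.

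The only real content is Step~2 — that the continuous symmetric monoidal functor $\Gamma_{\rm DR}(\ran X,-)$ carries $\C^{\star}$ to $\C$ — and I expect this to be the main point to pin down; but it is purely formal given Lemma~\ref{l:mantra general}, requiring no new computation. Step~1 is bookkeeping: one must only check that $\oblv^{\ch}_{\on{Com}}\circ\oblv^{\star\to\ch}_{\on{Com}}\simeq\oblv^{\star}_{\on{Com}}$ (and the analogous identity in $\Vect_k$), which holds since all of these forgetful functors are compatible with the underlying-object functors to $\fD(\ran X)$, respectively $\Vect_k$.
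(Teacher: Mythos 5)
Your proof is correct and follows essentially the same route as the paper: both first use Proposition \ref{p:mantra} to replace $\underset{X}\int\circ\,\on{Ind}^{\star\to\ch}_{\Lie}$ by $\Gamma_{\rm DR}(\ran X,-)\circ\oblv^{\star}_{\on{Com}}\circ\C^{\star}$, and then invoke Lemma \ref{l:mantra general} with $\cC=\fD(\ran X)$, $\cD=\Vect_k$, $F=\Gamma_{\rm DR}(\ran X,-)$, $M_\cC=\on{T}^{\star}_{\Lie}$, $M_\cD=\on{T}_{\Lie}$ to commute $\Gamma_{\rm DR}$ past the Chevalley functor. The paper's only additional remark, which you also make implicitly, is that $\on{Ind}^F_M$ reduces to $\Gamma_{\rm DR}(\ran X,-)_{\Lie}$ because $\Gamma_{\rm DR}(\ran X,-)$ is genuinely monoidal rather than merely left lax monoidal.
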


\begin{proof}

First, applying Proposition \ref{p:mantra}, we rewrite the composition
$$\Lie\alg^\star(\ran X)\overset{\on{Ind}^{\star\to\ch}_{\Lie}}\longrightarrow \Lie\alg^{\ch}(\ran X) 
\overset{\underset{X}\int}\longrightarrow \Vect_k$$
as 
$$\Lie\alg^{\star}(\ran X)\overset{\C^{\star}}\longrightarrow \on{Com}\coalg^{\star}(\fD(\ran X))
\overset{\oblv^{\star}_{\on{Com}}}\longrightarrow \fD(\ran X)\overset{\Gamma_{\rm DR}\left(\ran X,-\right)}\to \Vect_k,$$
and further as 
$$\Lie\alg^{\star}(\ran X)\overset{\C^{\star}}\longrightarrow \on{Com}\coalg^{\star}(\fD(\ran X))
\overset{\Gamma_{\rm DR}\left(\ran X,-\right)_{\on{Com}}}\longrightarrow 
\on{Com}\coalg(\Vect_k)\overset{\oblv_{\on{Com}}}\longrightarrow \Vect_k.$$

\medskip

Hence, to prove the proposition, it suffices to show that the following diagram of functors
is commutative:
$$
\CD
\Lie\alg^{\star}(\ran X)   @>{\C^{\star}}>>  \on{Com}\coalg^{\star}(\fD(\ran X)) \\  
@V{\Gamma_{\rm DR}(\ran X,-)_{\Lie}}VV       @VV{\Gamma_{\rm DR}(\ran X,-)_{\on{Com}}}V   \\
\Lie\alg(\Vect_k)  @>{\C}>>  \on{Com}\coalg(\Vect_k).
\endCD
$$
However, this follows from Lemma \ref{l:mantra general}: 

\medskip

We apply this lemma it to 
$\cC=\fD(\ran X)$, $\cD=\Vect_k$, $M_\cC=\on{T}_{\Lie}^\star$, 
$M_\cD=\on{T}_{\Lie}$, and $F=\Gamma_{\rm DR}\left(\ran X,-\right)$.
Note that the functor $\on{Ind}^F_M$ of Lemma \ref{l:mantra general} is isomorphic in our
case to just $\Gamma_{\rm DR}(\ran X,-)_{\Lie}$, since $\Gamma_{\rm DR}(\ran X,-)$ is monoidal
and not just left lax monoidal.

\end{proof}

\ssec{Chiral envelopes and factorization}

\sssec{}

Our current goal is to prove the following:

\begin{theorem}  \label{t:actual chiral envelope}
The functor $\on{Ind}^{\star\to\ch}_{\Lie}$ sends the subcategory 
$$\Lie\alg^\star(X)\subset \Lie\alg^\star(\ran X)$$ to the subcategory
$$\Lie\alg^{\ch}(X)\subset \Lie\alg^{\ch}(\ran X).$$
\end{theorem}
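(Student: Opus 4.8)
The plan is to reduce the statement to a computation with the $\star$-Chevalley complex, using the results of Section \ref{s:enveloping} and the factorization criterion already proved.

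First, by Theorem \ref{t:factorization} applied to $A=\on{Ind}^{\star\to\ch}_{\Lie}(L)$, we have $\on{Ind}^{\star\to\ch}_{\Lie}(L)\in\Lie\alg^{\ch}(X)$ if and only if the chiral commutative coalgebra $\C^{\ch}\bigl(\on{Ind}^{\star\to\ch}_{\Lie}(L)\bigr)$ is a factorization coalgebra. By Proposition \ref{p:mantra}, $\C^{\ch}\bigl(\on{Ind}^{\star\to\ch}_{\Lie}(L)\bigr)\simeq \oblv_{\on{Com}}^{\star\to\ch}\bigl(\C^{\star}(L)\bigr)$, and since $\oblv_{\on{Com}}^{\star\to\ch}$ commutes with the forgetful functors to $\fD(\ran X)$, this coalgebra has the same underlying D-module as $\C^{\star}(L)$. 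Thus the theorem is equivalent to the assertion: \emph{if $L\in\Lie\alg^{\star}(X)$, then $\oblv_{\on{Com}}^{\star\to\ch}\bigl(\C^{\star}(L)\bigr)$ factorizes.}

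To prove this, write $L\simeq(\Delta^{\on{main}})_*(L_X)$ and put $C_X:=(\Delta^{\on{main}})^!\bigl(\C^{\star}(L)\bigr)\in\fD(X)$. By Section \ref{sss:reduction to simple factorization} it suffices to show that for every finite set $I$ the factorization map attached to $\pi=\id_I$,
\[
\jmath(\id_I)^*\bigl((\Delta^I)^!(\C^{\star}(L))\bigr)\longrightarrow \jmath(\id_I)^*\bigl(C_X^{\boxtimes I}\bigr),
\]
is a homotopy equivalence. I would prove this via the canonical filtration on $\oblv^{\star}_{\on{Com}}(\C^{\star}(L))$ from Section \ref{sss:filtration on Chevalley course}, with $\on{gr}^k\simeq\on{Sym}^{k,\star}(\oblv^{\star}_{\Lie}(L)[k])$. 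The key input is the $\star$-analogue of the computation of $\on{Sym}^{k,\ch}(M)$ for $M$ supported on $X$ carried out before the ``if'' direction of Theorem \ref{t:factorization}: since $L$ is supported on $X$ and the natural transformation defining the $\star$-tensor product is the external product $\boxtimes$ with combinatorial operation disjoint union (Section \ref{ss:defn of conv expl}), one gets $\on{Sym}^{k,\star}(L)\simeq (\Delta^{\{1,\dots,k\}})_*\bigl((L_X^{\boxtimes k})_{\Sigma_k}\bigr)$. Hence $\jmath(\id_I)^*\bigl((\Delta^I)^!(\on{gr}^k(\C^{\star}(L)))\bigr)$ vanishes for $k<|I|$, being supported on the big diagonal of $X^I$; and for $k\ge|I|$ its restriction to the locus $U(\id_I)$ of pairwise-distinct points decomposes, via the base-change formula for $(\Delta^I)^!\circ(\Delta^{\{1,\dots,k\}})_*$ on $\ran X$ together with the $\Sigma_k$-coinvariants, as a sum over the ways of distributing $k$ tensor factors of $L_X$ among the $|I|$ points. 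Summing over all $k$, these pieces reassemble to $C_X^{\boxtimes I}|_{U(\id_I)}$; and because the $\star$-bracket of a D-module supported on $X$ is ``diagonal,'' the Chevalley differential on $\C^{\star}(L)$ decomposes on $U(\id_I)$ as the sum of the Chevalley differentials at the $|I|$ separate points. So the identification is compatible with differentials and realizes the displayed factorization map, which is therefore an equivalence.

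The crux is the computation in the last paragraph. Getting the equivalence on associated graded objects is routine bookkeeping with $\Sigma_k$-coinvariants and with the combinatorics of $(\Delta^I)^!\circ(\Delta^{\{1,\dots,k\}})_*$ on the Ran space; the delicate point is to upgrade this to an equivalence of the filtered complexes themselves on $U(\id_I)$ — equivalently, to verify that the reassembly is compatible with the full Chevalley differential, which rests on the ``diagonal'' nature of $\star$-brackets of $X$-supported D-modules. Granting this, the factorization of $\oblv_{\on{Com}}^{\star\to\ch}(\C^{\star}(L))$, and hence Theorem \ref{t:actual chiral envelope}, follows at once.
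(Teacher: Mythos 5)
Your overall strategy coincides with the paper's: reduce via Theorem \ref{t:factorization} and Proposition \ref{p:mantra} to showing that $\oblv_{\on{Com}}^{\star\to\ch}(\C^{\star}(L))$ factorizes, check only the maps for $\pi=\id_I$ using Section \ref{sss:reduction to simple factorization}, filter the Chevalley complex, and exploit the explicit form of $\on{Sym}^{k,\star}$ of an $X$-supported D-module (your vanishing for $k<|I|$ and the decomposition over partitions $k=\sum_{i\in I} n_i$ is exactly the paper's final display). So the skeleton is right, and you have correctly located the crux.

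Where the proposal falls short is precisely at that crux. You propose to build a ``reassembly'' equivalence by hand and then verify it is compatible with the full Chevalley differential, appealing to the ``diagonal'' nature of $\star$-brackets of $X$-supported modules. In the $\oo$-categorical setting this is not an argument one can complete as stated: there is no chain-level differential to check against, and making ``the differential decomposes as a sum over the $|I|$ points'' homotopy-coherent is exactly the kind of coherence problem the whole framework is designed to avoid. The paper sidesteps this entirely. The point is that the factorization map \eqref{e:factorization map again} is already given (it is the comultiplication), and Section \ref{sss:filtration on Chevalley} supplies the two structural facts you are missing: (i) the comultiplication is compatible with the filtration in the weak sense of \eqref{e:comult on gr}, so the factorization map is a map of (finite, in each relevant degree) filtered objects and is an equivalence if and only if it is one on associated graded; and (ii) by \eqref{e:comult on gr and triv}, the induced map on $\on{gr}^{\bullet}$ is identified with the corresponding map for $L_{\triv}=\triv_{\Lie}\circ\oblv_{\Lie}(L)$, i.e., for $\on{Sym}^{\bullet,\star}(M)$ with $M=(\Delta^{\on{main}})_*(M_X)$. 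Passing to associated graded kills the bracket part of the differential, so no compatibility with the full differential ever needs to be verified; only the explicit $\on{Sym}^{\bullet,\star}$ computation remains, which you already have. Replace your last paragraph with an appeal to \eqref{e:comult on gr} and \eqref{e:comult on gr and triv} and the proof closes.
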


Before we prove Theorem \ref{t:actual chiral envelope}, let us derive some corollaries:

\begin{cor}  \label{c:actual chiral envelope}
The resulting functor 
\begin{equation} \label{e:actual chiral envelope}
\on{Ind}^{\star\to\ch}_{\Lie}:\Lie\alg^\star(X)\to \Lie\alg^{\ch}(X)
\end{equation}
is the left adjoint of the forgetful functor
$$\Lie\alg^\star(X)\leftarrow \Lie\alg^{\ch}(X):\oblv^{\ch\to \star}_{\Lie}.$$
\end{cor}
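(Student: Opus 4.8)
The plan is to deduce this formal consequence from the adjunction $\on{Ind}^{\star\to\ch}_{\Lie}\rightleftarrows\oblv^{\ch\to\star}_{\Lie}$ on all of $\ran X$, constructed in Section~\ref{s:enveloping} around \eqref{e:forgetful Lie ch to star}, together with Theorem~\ref{t:actual chiral envelope}, via the general principle that an adjunction of $\oo$-categories restricts to an adjunction between full subcategories that are preserved by both functors.

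First I would check that $\oblv^{\ch\to\star}_{\Lie}$ carries $\Lie\alg^{\ch}(X)$ into $\Lie\alg^{\star}(X)$. By construction this functor commutes with the forgetful functors to $\fD(\ran X)$, so it leaves the underlying D-module unchanged; in particular it preserves the condition of being supported on the main diagonal $X\subset\ran X$, i.e.\ of lying in the essential image of $(\Delta^{\on{main}})_*$. Hence $\oblv^{\ch\to\star}_{\Lie}$ restricts to a functor $\Lie\alg^{\ch}(X)\to\Lie\alg^{\star}(X)$. Theorem~\ref{t:actual chiral envelope} supplies precisely the corresponding statement for the left adjoint: $\on{Ind}^{\star\to\ch}_{\Lie}$ carries $\Lie\alg^{\star}(X)$ into $\Lie\alg^{\ch}(X)$, so it likewise restricts to a functor between the $X$-supported subcategories.

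It then remains to invoke the elementary fact that if $F:\cC\rightleftarrows\cD:G$ is an adjoint pair and $\cC_0\subset\cC$, $\cD_0\subset\cD$ are full $\oo$-subcategories with $F(\cC_0)\subset\cD_0$ and $G(\cD_0)\subset\cC_0$, then the restrictions $F|_{\cC_0}$ and $G|_{\cD_0}$ are again an adjoint pair. Concretely, one may either observe that the unit and counit of the ambient adjunction restrict to natural transformations relating $\id_{\cC_0}$ with $G|_{\cD_0}\circ F|_{\cC_0}$ and $F|_{\cC_0}\circ G|_{\cD_0}$ with $\id_{\cD_0}$, the triangle identities persisting; or, using the mapping-space characterization of adjunctions (see \cite{topos}, Sect.~5.2), note that for $A\in\Lie\alg^{\star}(X)$ and $B\in\Lie\alg^{\ch}(X)$ fullness of the two subcategories inside $\Lie\alg^{\star}(\ran X)$ and $\Lie\alg^{\ch}(\ran X)$ identifies the mapping spaces $\Map(\on{Ind}^{\star\to\ch}_{\Lie}(A),B)$ and $\Map(A,\oblv^{\ch\to\star}_{\Lie}(B))$, computed in the subcategories, with the corresponding mapping spaces in the ambient $\ran X$-categories, where they agree naturally by the ambient adjunction. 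There is no genuine obstacle in this corollary: its entire content is Theorem~\ref{t:actual chiral envelope}, and the only other input --- that the support condition is detected on underlying D-modules and is therefore preserved by $\oblv^{\ch\to\star}_{\Lie}$ --- is immediate from the definitions.
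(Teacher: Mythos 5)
Your proposal is correct and coincides with the paper's (implicit) argument: the corollary is stated without proof precisely because, once Theorem \ref{t:actual chiral envelope} guarantees that $\on{Ind}^{\star\to\ch}_{\Lie}$ preserves the support condition, and since $\oblv^{\ch\to\star}_{\Lie}$ leaves the underlying D-module unchanged, the adjunction on $\ran X$ restricts to the full subcategories by the standard restriction principle you cite. Nothing further is needed.
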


\medskip

We shall sometimes use the notation $U^{\ch}$ for the functor in 
\eqref{e:actual chiral envelope}. This is the higher-dimensional
and derived version of the chiral enveloping functor of \cite{bd}, Sect. 3.7.

\medskip

From Proposition \ref{p:envelopes} we obtain:
\begin{cor} \label{c:envelopes}
For $L\in \Lie\alg^\star(X)$ there exists a canonical homotopy equivalence
$$\underset{X}\int\, U^{\ch}(L) \simeq \oblv_{\on{Com}}\circ \C\left(\Gamma_{\rm DR}(\ran X,L)_{\Lie}\right).$$
\end{cor}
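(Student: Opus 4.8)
The plan is to read the statement off the commutative square of Proposition \ref{p:envelopes}, using Theorem \ref{t:actual chiral envelope} to identify $\on{Ind}^{\star\to\ch}_{\Lie}$ with $U^{\ch}$ on the relevant subcategory; the corollary will then require no new computation, its content being entirely housed in Proposition \ref{p:envelopes} together with Theorem \ref{t:actual chiral envelope}.

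Concretely, I would proceed as follows. First fix $L\in\Lie\alg^\star(X)$ and view it as an object of $\Lie\alg^\star(\ran X)$ via the fully faithful inclusion. Chasing $L$ through the square of Proposition \ref{p:envelopes} --- comparing the two ways of traversing it from the upper-left to the lower-right corner --- produces a canonical homotopy equivalence
$$\underset{X}\int\,\on{Ind}^{\star\to\ch}_{\Lie}(L)\ \simeq\ \oblv_{\on{Com}}\circ\C\bigl(\Gamma_{\rm DR}(\ran X,L)_{\Lie}\bigr),$$
where on the left $\on{Ind}^{\star\to\ch}_{\Lie}(L)$ is regarded as an object of $\Lie\alg^{\ch}(\ran X)$ and $\underset{X}\int$ is the chiral homology functor of Section \ref{sss:direct image}. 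It then remains only to rewrite the left-hand side: by Theorem \ref{t:actual chiral envelope} the object $\on{Ind}^{\star\to\ch}_{\Lie}(L)$ already lies in the full subcategory $\Lie\alg^{\ch}(X)\subset\Lie\alg^{\ch}(\ran X)$, and under this identification it coincides, by the very definition of the functor in \eqref{e:actual chiral envelope}, with $U^{\ch}(L)$. Substituting $U^{\ch}(L)$ for $\on{Ind}^{\star\to\ch}_{\Lie}(L)$ in the displayed equivalence yields exactly the asserted formula.

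The one point that deserves a sentence of care --- and the closest thing to an obstacle here --- is the bookkeeping that makes the previous paragraph legitimate: one must know that $\on{Ind}^{\star\to\ch}_{\Lie}$ is compatible with the inclusions $\Lie\alg^\star(X)\hookrightarrow\Lie\alg^\star(\ran X)$ and $\Lie\alg^{\ch}(X)\hookrightarrow\Lie\alg^{\ch}(\ran X)$, so that the value $\on{Ind}^{\star\to\ch}_{\Lie}(L)$ appearing in Proposition \ref{p:envelopes} and the object $U^{\ch}(L)\in\Lie\alg^{\ch}(X)$ appearing in the statement are literally the same, and that evaluating $\underset{X}\int$ on $U^{\ch}(L)$ amounts to evaluating it on the image of $U^{\ch}(L)$ in $\Lie\alg^{\ch}(\ran X)$. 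Both facts are immediate from Theorem \ref{t:actual chiral envelope} and the definition of $U^{\ch}$, so no genuinely new argument is required.
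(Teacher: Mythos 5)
Your proposal is correct and is exactly the paper's argument: the corollary is stated as an immediate consequence of the commutative square of Proposition \ref{p:envelopes}, with $U^{\ch}(L)$ being by definition $\on{Ind}^{\star\to\ch}_{\Lie}(L)$ for $L\in\Lie\alg^\star(X)$ (legitimized by Theorem \ref{t:actual chiral envelope}). The bookkeeping point you flag at the end is the only content beyond the diagram chase, and the paper treats it the same way.
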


\begin{remark} 
In the situation of the above corollary, let $L_X$ be the D-module on $X$, such that
$$(\Delta^{\on{main}})_*(L_X)\simeq \oblv^\star_{\Lie}(L).$$
Note that 
$$\oblv_{\Lie}\left(\Gamma_{\rm DR}(\ran X,L)_{\Lie}\right)\simeq \Gamma_{\rm DR}(X,L_X),$$
which gives the object $\Gamma_{\rm DR}(X,L_X)\in \Vect_k$ a canonical Lie algebra structure.
Thus, Corollary \ref{c:envelopes} gives a conceptual proof of (a generalization of) a theorem from \cite{bd}, Sect. 4.8.1
that computes the chiral homology of chiral envelopes of $\star$-Lie algebras.
\end{remark}

\begin{remark}  \label{r:unital chiral homology}
The actual theorem of \cite{bd} is slightly different from ours. Namely, in {\it loc.cit.}
one considers the unital version of $U^{\ch}(L)$, and proves the result about its
chiral homology. Thus, in order to obtain their formulation one needs to complement
Corollary \ref{c:envelopes} by one more theorem that shows
that chiral homology of a non-unital chiral Lie algebra $A$ differs from the chiral homology of the corresponding unital 
chiral Lie algebra by a copy of the ground field $k$,
provided that $X$ is connected; see {\it loc.cit.}, Proposition 4.4.8. 
\end{remark}

\begin{remark}

Note that Theorem \ref{t:actual chiral envelope} allows to construct non-commutative chiral Lie algebras
on $X$, for $X$ of any dimension: start with a $\star$-Lie algebra $L$ and take $U^{\ch}(L)$.

\medskip

For example, let $L'$ be a Lie algebra in $\Vect_k$.
Then the D-module $L:=\underline{L'}$ on $X$, corresponding to the ``constant sheaf" with
fiber $L'$, is naturally a $\star$-Lie algebra on $X$. 
Thus, for any $L'$ as above, we can produce the chiral Lie algebra $U^{\ch}(\underline{L'})$.

\medskip

As another example, we can take $L=L'\otimes D_X$, where $D_X\in \fD(X)$ is the
ring of differential operators. The structure of $\star$-Lie algebra on $L'\otimes D_X$
is defined as in \cite{bd}, Example 2.5.6(b)(ii). Or we can consider $L=\Theta_X\underset{\cO_X}\otimes D_X$,
where $\Theta_X$ is the algebroid of vector fields on $X$, see \cite{bd}, Example 2.5.6(b)(i).\footnote{The question of constructing central extensions of these examples \`a la Kac-Moody or Virasoro
is much more subtle.}

\medskip

Note, however, that by Remark \ref{r:pbw}, unless $\dim(X)=1$, if we start with $L$ which lies in the 
heart of the natural t-structure on $X$ and is flat as a quasi-coherent sheaf, the chiral Lie algebra
$U^{\ch}(L)$ considered as a D-module on $X$, will not lie in the heart of the t-structure. This is closely analogous to the topological setting: For $n\geq 2$, any $\cE_n$-algebra over a field of characteristic zero that lies in the heart of the t-structure on chain complexes (i.e., is discrete) has a commutative algebra structure.
\medskip

By the same remark, if we want to obtain $U^{\ch}(L)$, which up to a cohomological shift,
lies in the heart of the t-structure, we typically need to start with $L$, such that $L[1-\dim(X)]$ lies
in the heart of the t-structure. However, the $\star$-Lie algebra structure on such $L$ is automatically trivial, unless $\dim(X)=1$. Likewise, in the topological setting, the $\cE_n$-enveloping algebra of a Lie algebra never lies in the heart of the t-structure, for $n\geq 2$.

\medskip

To summarize: In higher dimensions, it is difficult to produce non-commutative chiral Lie algebras
that lie in the heart of the t-structure on $\fD(X)$.

\end{remark}

\sssec{}  \label{sss:filtration on Chevalley}

For the proof of Theorem \ref{t:actual chiral envelope} let us recall the setting of 
Section  \ref{sss:filtration on Chevalley course}. We shall need one more property
of this construction, which we shall state in a form which is somewhat crude, but will suffice 
for our purposes.

\medskip

Let $\cC$ and $L$ be as in Section  \ref{sss:filtration on Chevalley course}. For a positive
integer $k$ let $\C(L)_{\leq k}$ denote the corresponding term of the filtration on $\oblv_{\on{Com}}(\C(L))$. 
We claim that the coalgebra structure on $\C(L)$ is compatible with the filtration in the 
following weak sense:

\medskip

\noindent For positive integers $k$ and $n$ and a partition $k=k_1+\cdots+k_n$
we have a map
$$\C(L)_{\leq k}\to \C(L)_{\leq k_1}\otimes...\otimes \C(L)_{\leq k_n},$$
satisfying the natural associativity property. 
For $k'\geq k$ and $k'_i\geq k_i$, $i=1,\ldots,n$ the diagram
$$
\CD
\C(L)_{\leq k} @>>>  \C(L)_{\leq k_1}\otimes...\otimes \C(L)_{\leq k_n}\\
@VVV    @VVV  \\
\C(L)_{\leq k'} @>>>  \C(L)_{\leq k'_1}\otimes...\otimes \C(L)_{\leq k'_n}
\endCD
$$
is commutative, and the diagram
$$
\CD
\C(L)_{\leq k} @>>>  \C(L)_{\leq k_1}\otimes...\otimes \C(L)_{\leq k_n} \\
@VVV    @VVV  \\
\oblv_{\on{Com}}(\C(L)) @>>>  \oblv_{\on{Com}}(\C(L))\otimes...\otimes \oblv_{\on{Com}}(\C(L))
\endCD
$$
is commutative as well.  In particular, for $k$ and $n$ as above, 
we obtain the maps
\begin{equation} \label{e:comult on gr}
\on{gr}^k(\C(L))\to \on{gr}^{k_1}(\C(L))\otimes\ldots \otimes \on{gr}^{k_n}(\C(L)),
\end{equation}
that also have the natural associativity property.

\medskip

The final property that we need is the following: 

\medskip

Recall the identification 
$$\on{gr}^k(\C(L))\simeq \on{Sym}^k_\cC(L)\simeq \on{gr}^k(\C(L_{\triv})),$$
where $L_{\triv}:=\triv_{\Lie}\circ \oblv_{\Lie}(L)$.
We obtain that the diagrams
\begin{equation} \label{e:comult on gr and triv}
\CD
\on{gr}^k(\C(L))  @>>>  \on{gr}^{k_1}(\C(L))\otimes\ldots \otimes \on{gr}^{k_n}(\C(L)) \\
@AAA    @AAA  \\
\on{gr}^k(\C(L_{\triv}))  @>>>  \on{gr}^{k_1}(\C(L_{\triv}))\otimes\ldots \otimes \on{gr}^{k_n}(\C(L_{\triv}))
\endCD
\end{equation}
commute, where in the upper horizontal row we use the map \eqref{e:comult on gr}, and in the
lower horizontal row the map is \eqref{e:comult on gr} for $L_{\triv}$. 

\sssec{Proof of Theoem \ref{t:actual chiral envelope}}

By Theorem \ref{t:factorization}, it suffices to show that for $L\in \Lie\alg^\star(\ran X)$
$$B:=\C^{\ch}(\on{Ind}^{\star\to\ch}_{\Lie}(L))\in \on{Com}\coalg^{\ch}(\ran X)$$ factorizes.
We will use the discussion in Section  \ref{sss:reduction to simple factorization} and show that for every
finite set $I$ the corresponding map
\begin{equation} \label{e:factorization map again}
\jmath(\id_I)^*\left((\Delta^I)^!(B)\right)\to \jmath(\id_I)^*\left(\left((\Delta^{\on{main}})^!(B)\right)^{\boxtimes I}\right)
\end{equation}
is a homotopy equivalence in $\fD(U(\id_I))$. 

\medskip

By Proposition \ref{p:mantra}, we have:
$$B\simeq \oblv_{\on{Com}}^{\star\to\ch}(\C^\star(L)).$$

Consider now the filtration on both sides of \eqref{e:factorization map again} given by the filtration on
$$\oblv_{\on{Com}}^\star(\C^\star(L))$$
as in Section  \ref{sss:filtration on Chevalley}.

\medskip

We obtain that it is sufficient to show that the maps
$$\on{gr}^\bullet(B)\to \underset{I}\otimes^{\star}\,  \on{gr}^\bullet(B)$$
of \eqref{e:comult on gr} become homotopy equivalences after applying 
$\jmath(\id_I)^*\circ (\Delta^I)^!$. However, \eqref{e:comult on gr and triv}
allows to reduce the latter assertion to the case when $L$ has the trivial
Lie algebra structure.

\medskip

Thus, we have to show that for $M\in \fD(\ran X)$ of the form $(\Delta^{\on{main}})_*(M_X)$,
and the coalgebra 
$$B:=\on{Sym}^{\bullet,\star}(M),$$
the maps \eqref{e:factorization map again} are homotopy equivalences.

\medskip

However, it is easy to see that
$$\jmath(\id_I)^*\circ (\Delta^I)^!\left(\on{Sym}^{n,\star}(M)\right)\simeq
\underset{n=\underset{i\in I}\Sigma\, n_i}\oplus\, \jmath(\id_I)^*\left(\underset{i}\boxtimes\, \on{Sym}^{n_i,!}(M_X)\right),$$
where $\on{Sym}^{n_i,!}(M_X)$ denotes the corresponding symmetric power taken in category $\fD(X)$,
with respect to the symmetric monoidal structure given by tensor product (see Section \ref{sss:! product}).
This makes the homotopy equivalence \eqref{e:factorization map again} for $\on{Sym}^{\bullet,\star}(M)$ manifest.

\qed

\ssec{The Poincar\'e-Birkhoff-Witt theorem}

\sssec{}

We shall now use Theorem \ref{t:actual chiral envelope} and Proposition \ref{p:mantra} to prove
a generalized version of the PBW theorem of chiral universal enveloping algebras, stated in the original 
form as Theorem 3.7.14 of \cite{bd}.

\medskip

Thus, let $L$ be a $\star$-Lie algebra on $X$, and let $U^{\ch}(L)\in \Lie\alg^{\ch}(X)$ be its 
chiral universal enveloping algebra. Let $U^{\ch}(L)_X$ denote the corresponding object of
$\fD(X)$.

\medskip

By Theorem \ref{t:actual chiral envelope} and Proposition \ref{p:mantra}, we have a homotopy equivalence of
D-modules on $X$:
\[U^{\ch}(L)_X\simeq (\Delta^{\on{main}})^!\left(\oblv^{\star}_{\on{Com}}(\C^{\star}(L))\right)[-1].\]

The filtration of Section  \ref{sss:filtration on Chevalley course} on 
$\oblv^{\star}_{\on{Com}}(\C^{\star}(L))$ defines a filtration on $U^{\ch}(L)_X$.

\begin{cor} \label{c:PBW}
The associated graded $\on{gr}^\bullet(U^{\ch}(L)_X)$ is canonically isomorphic to
$$\on{Sym}^{\bullet,!}(L_X[1])[-1],$$
where $(\Delta^{\on{main}})_*(L_X)\simeq L$.
\end{cor}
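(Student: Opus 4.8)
The strategy is to transport everything along the explicit description $U^{\ch}(L)_X \simeq (\Delta^{\on{main}})^!\bigl(\oblv^{\star}_{\on{Com}}(\C^{\star}(L))\bigr)[-1]$ established just above, so that the filtration on $U^{\ch}(L)_X$ is by definition the pullback under $(\Delta^{\on{main}})^!$ (and a shift) of the canonical filtration of Section \ref{sss:filtration on Chevalley course} on the $\star$-Chevalley complex. Since $(\Delta^{\on{main}})^!$ and the shift $[-1]$ are exact, they commute with passage to associated graded, so it suffices to compute $(\Delta^{\on{main}})^!$ of $\on{gr}^k\bigl(\C^{\star}(L)\bigr)$ and then shift. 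By the subquotient formula of Section \ref{sss:filtration on Chevalley course}, applied in $\cC = \fD(\ran X)$ with the $\star$ symmetric monoidal structure, one has $\on{gr}^k\bigl(\C^{\star}(L)\bigr)\simeq \on{Sym}^{k,\star}\bigl(\oblv^\star_{\Lie}(L)[k]\bigr)$; since $\oblv^\star_{\Lie}(L)=(\Delta^{\on{main}})_*(L_X)$, this is $\on{Sym}^{k,\star}\bigl((\Delta^{\on{main}})_*(L_X)[k]\bigr)$.

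The key computational input is the already-noted identity, from the proof of Theorem \ref{t:actual chiral envelope}, that for $M=(\Delta^{\on{main}})_*(M_X)$ supported on $X$,
\[
(\Delta^I)^!\left(\on{Sym}^{n,\star}(M)\right)\simeq
\underset{n=\underset{i\in I}\Sigma\, n_i}\oplus\, \underset{i}\boxtimes\, \on{Sym}^{n_i,!}(M_X),
\]
where $\on{Sym}^{n_i,!}$ is the symmetric power in $\fD(X)$ with respect to the $!$-tensor product of Section \ref{sss:! product}. Specializing to $I=\on{pt}$: the only surjection contributing is the one with $n_{\on{pt}}=n$, hence $(\Delta^{\on{main}})^!\left(\on{Sym}^{n,\star}(M)\right)\simeq \on{Sym}^{n,!}(M_X)$, and the $\on{Sym}^{n,!}$ for $n\neq 0$ are exactly the nonzero pieces (for $n=0$ the nonunital convention makes it disappear). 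Taking $M=(\Delta^{\on{main}})_*(L_X[k])$ — i.e. $M_X = L_X[k]$ — gives $(\Delta^{\on{main}})^!\bigl(\on{gr}^k\C^{\star}(L)\bigr)\simeq \on{Sym}^{k,!}(L_X[k])$. Finally, applying the overall shift $[-1]$ yields $\on{gr}^k\bigl(U^{\ch}(L)_X\bigr)\simeq \on{Sym}^{k,!}(L_X[k])[-1]$, and reassembling over all $k\geq 1$ identifies $\on{gr}^\bullet(U^{\ch}(L)_X)$ with $\on{Sym}^{\bullet,!}(L_X[1])[-1]$, as claimed. One should note that $\on{Sym}^{k,!}(L_X[k])$ and $\on{Sym}^{k,!}(L_X[1])$ agree in the relevant sense once one accounts for the Koszul sign rule — with the grading bookkeeping, the $k$-th graded piece of $\on{Sym}^{\bullet,!}(L_X[1])$ is $\on{Sym}^{k,!}(L_X[1])$, which carries the appropriate internal shift — so this is a matter of writing the décalage cleanly.

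The only genuinely delicate point is making sure the filtration on $U^{\ch}(L)_X$ referred to in the statement really is the one obtained by applying $(\Delta^{\on{main}})^!(-)[-1]$ to the filtration on $\oblv^{\star}_{\on{Com}}(\C^{\star}(L))$ — this is immediate from how the filtration was defined in Section \ref{sss:filtration on Chevalley} together with Proposition \ref{p:mantra}, but it is worth spelling out that the functor $(\Delta^{\on{main}})^!$, being exact, genuinely carries the associated-graded of the source to the associated-graded of the target. Beyond that, the argument is a bookkeeping exercise: the hard work — namely the identification $U^{\ch}(L)_X\simeq (\Delta^{\on{main}})^!\bigl(\oblv^{\star}_{\on{Com}}(\C^{\star}(L))\bigr)[-1]$ and the tensor-decomposition of $(\Delta^I)^!\on{Sym}^{n,\star}$ for $X$-supported modules — has already been done in proving Theorem \ref{t:actual chiral envelope}, so the main obstacle is purely the sign/shift convention in the Koszul-dual symmetric power, not any substantive homotopy theory.
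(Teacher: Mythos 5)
Your route is the paper's route: the proof in the text consists of precisely the two homotopy equivalences you isolate, namely $\on{gr}^\bullet\bigl(\oblv^{\star}_{\on{Com}}(\C^{\star}(L))\bigr)\simeq \on{Sym}^{\bullet,\star}(L[1])$ together with $(\Delta^{\on{main}})^!\bigl(\on{Sym}^{\bullet,\star}(M)\bigr)\simeq \on{Sym}^{\bullet,!}(M_X)$ for $M=(\Delta^{\on{main}})_*(M_X)$, applied to the filtration transported through $(\Delta^{\on{main}})^!(-)[-1]$. Structurally there is nothing to object to, and the identification of where the real work lives (Theorem \ref{t:actual chiral envelope} and Proposition \ref{p:mantra}) is accurate.

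The one step that does not survive scrutiny is your final reconciliation. You read the subquotient formula of Section \ref{sss:filtration on Chevalley course} literally as $\on{gr}^k(\C(L))\simeq \on{Sym}^k(\oblv_{\Lie}(L)[k])$, feed in $M_X=L_X[k]$, arrive at $\on{Sym}^{k,!}(L_X[k])$, and then assert that this ``agrees with $\on{Sym}^{k,!}(L_X[1])$ once one accounts for the Koszul sign rule.'' It does not: $\on{Sym}^k(V[k])$ sits in degree shifted by $k^2$ (and is $\on{Sym}^k(V)$ or $\Lambda^k(V)$ according to the parity of $k$), whereas $\on{Sym}^k(V[1])\simeq \Lambda^k(V)[k]$; for $k\geq 2$ these are genuinely different objects, not the same object written in different sign conventions, so no d\'ecalage or sign bookkeeping will identify them. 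The correct input --- and the one the paper actually uses in its displayed proof --- is $\on{gr}^k\bigl(\oblv^{\star}_{\on{Com}}(\C^{\star}(L))\bigr)\simeq \on{Sym}^{k,\star}(L[1])$; the ``$[k]$'' in the section-level statement should be understood as the external shift of $\Lambda^k$ already packaged into $\on{Sym}^k(L[1])$, not as an internal shift to be applied to $L$ before taking the symmetric power. Substituting $M_X=L_X[1]$ throughout, your computation closes exactly as the paper's does, with no residual shift to explain away.
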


\sssec{Proof of Corollary \ref{c:PBW}}

The proof follows immediately from the homotopy equivalences
$$\on{gr}^\bullet\left(\oblv^{\star}_{\on{Com}}(\C^{\star}(L))\right)\simeq \on{Sym}^{\bullet,\star}(L[1]),$$
and for $M=(\Delta^{\on{main}})_*(M_X)$,
$$(\Delta^{\on{main}})^!\left(\on{Sym}^{\bullet,\star}(M)\right)\simeq \on{Sym}^{\bullet,!}(M_X).$$ 

\qed

\begin{remark}  \label{r:pbw}
Assume that $X$ is smooth of dimension $d$, and $L$ is such that $L[1-d]$ lies in the heart of the natural
t-structure on $X$, and is flat as a quasi-coherent sheaf. Then Corollary \ref{c:PBW} implies that
$U^{\ch}(L)_X[1-d]$ also lies in the heart of the t-structure.
\end{remark}

\ssec{$\star$-Factorization coalgebras}

\sssec{}

Let us denote by $\Fact^\star(X)$ the full subcategory of $\on{Com}\coalg^\star(\ran X)$ equal to the preimage under
$$\oblv_{\on{Com}}^{\star\to\ch}:\on{Com}\coalg^\star(\ran X)\to \on{Com}\coalg^{\ch}(\ran X)$$
of the full subcategory $\Fact(X)\subset \on{Com}\coalg^{\ch}(\ran X)$.

\medskip

We can encode Theorems \ref{t:main} and \ref{t:actual chiral envelope} and Proposition \ref{p:mantra}
as the following commutative diagram:

\[\xymatrix{
\Lie\alg^{\ch}(X)
\ar@/^1.5pc/[rrrr]^{\sim}
\ar@{_{(}->}[r]
&
\Lie\alg^{\ch}(\ran X)
\ar[rr]^{\sim}_{\C^{\ch}}
&&
\on{Com}\coalg^{\ch}(\ran X)
&
\Fact(X)\ar@{^{(}->}[l]\\
\Lie\alg^\star(X)
\ar@/^1.5pc/[rrrr]
\ar[u]^{U^{\ch}}
\ar@{_{(}->}[r]
&
\Lie\alg^\star(\ran X)
\ar[u]^{\on{Ind}^{\star\to\ch}_{\Lie}}
\ar[rr]_{\C^\star}
&&
\on{Com}\coalg^{\star}(\ran X)
\ar[u]_{\oblv_{\on{Com}}^{\star\to\ch}} 
&
\Fact^\star(X)
\ar[u]_{\oblv_{\on{Com}}^{\star\to\ch}}
\ar@{^{(}->}[l]\\
}\]

\sssec{}

Note that, unlike $\C^{\ch}$, the functor
$$\C^{\star}:\Lie\alg^\star(\ran X)\to \on{Com}\coalg^\star(\ran X)$$
is not an equivalence, since the category $\fD(\ran X)$ equipped with the
$\star$ symmetric monoidal functor is not pro-nilpotent. For instance, for $X=(\on{pt}):=\Spec k$, the above functor is the usual
functor
$$\C:\Lie\alg(\Vect_k)\to \on{Com}\coalg(\Vect_k),$$
which is not an equivalence (since we include no nilpotence hypotheses on the algebras). 

\medskip

This example embeds into the case of any $X$ by choosing a $k$-point $x\in X$, and thus realizing
$\Vect_k\simeq \fD(\ran (\on{pt}))$ as a full subcategory of $\fD(\ran X)$.

\section{Chiral and factorization modules}  \label{s:modules}

\ssec{Modules for algebras over an operad}

\sssec{}

We return to the setting of Section  \ref{ss:alg and mod}. Let $\cM$
be a module $\oo$-category for $\cC$. I.e., $\cM$ is a $\cC$-module in the
symmetric monoidal $(\oo,1)$-category of $\cX$-modules in 
$\ooCat^{\on{st}}_{\on{pres},\on{cont}}$. 

\medskip

We can consider $\cM\times \cC$ as another symmetric monoidal $\oo$-category,
where the monoidal operation on  $\cM\times 0_\cC$ is zero. Let 
$$p:\cM\times \cC\longrightarrow \cC,\,\, (m\times c)\squig c$$
denote the tautological homomorphism. 

\begin{definition}
The $\oo$-category $\m_A(\cM)$ is the fiber of the functor
$$p:\cO\alg(\cM\times \cC)\longrightarrow \cO\alg(\cC)$$
over $A$.
\end{definition}

The natural forgetful functor
$$\oblv_A:\m_A(\cM)\longrightarrow \cM\times \cC\longrightarrow \cM$$
admits a left adjoint, denoted $\Free_A$. The composition
$$\on{T}_A:=\Free_A\circ \oblv_A:\cM\longrightarrow \cM$$
is naturally a monad on $\cM$, and by the Barr-Beck-Lurie theorem
$\m_A(\cM)\simeq \m_{\on{T}_A}(\cM)$.

\medskip

Similarly, for a cooperad $\cP$ and $B\in \cP\nilpcoalg_{\on{d.p.}}(\cC)$,
we introduce an $\oo$-category $$\Comod^{\rm nil}_B(\cM),$$ endowed with a forgetful
functor $\oblv_B:\Comod^{\rm nil}_B(\cM)\to \cM$, which admits a right adjoint
$$\coFree_B:\cM\to \Comod^{\rm nil}_B(\cM),$$ so that
$$\Comod^{\rm nil}_B(\cM)\simeq \Comod_{\on{S}_B}(\cM),$$
where 
$\on{S}_{B}:=\oblv_B\circ \coFree_B$.

\medskip

It is easy to see from the construction that the $\oo$-categories $\m_A(\cM)$ and $\Comod^{\rm nil}_B(\cM)$ are stable.

\sssec{}

Let $\cO$ and $A$ be as above. Set $\cO^\vee:=\Bar(\cO)$ and $A^\vee:=\Bar^{\on{enh}}_\cO(A)$.
We have a tautological functor
$$\triv_A:\cM\longrightarrow \m_A(\cM),$$
which commutes with limits and colimits. We denote by 
$$\Bar_A:\m_A(\cM)\longrightarrow \cM$$
the left adjoint of $\triv_A$.

\begin{lemma}
The comonad $\Bar_A\circ \triv_A:\cM\to \cM$ is canonically equivalent to $\on{S}_{A^\vee}$.
\end{lemma}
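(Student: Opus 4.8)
The plan is to reduce this to the absolute case $\cM = \cC$ already treated in Lemma \ref{l:identification of comonad}, using the defining fiber-square description of $\m_A(\cM)$. First I would unwind the definition: $\m_A(\cM)$ sits in a pullback
\[
\xymatrix{
\m_A(\cM) \ar[r] \ar[d] & \cO\alg(\cM\times\cC) \ar[d]^{p} \\
\{A\} \ar[r] & \cO\alg(\cC),
}
\]
and the functors $\triv_A$ and $\oblv_A$ are the base-changes along $\cM\to\cM\times\cC$ of $\triv_{\cO}$ and $\oblv_{\cO}$ for the ambient category $\cM\times\cC$ (where the monoidal operation restricted to $\cM$ is zero). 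The key point is that because the multiplication on $\cM\times 0$ is trivial, the monad $\on{T}_{\cO}$ on $\cM\times\cC$ restricted over the fixed algebra $A$ acts on $m$ by the formula
\[
\on{T}_A(m) \;\simeq\; \bigsqcup_{n\geq 1}\bigl(\cO(n)\otimes A^{\otimes (n-1)}\otimes m\bigr)_{\Sigma_{n-1}},
\]
i.e. it is the "linearization at $A$" of $\on{T}_{\cO}$. Dually the comonad $\on{S}_B$ on $\cM$ for $B\in\cP\nilpcoalg_{\on{d.p.}}(\cC)$ is the linearization $\on{S}_B(m)\simeq\prod_{n\geq 1}(\cP(n)\otimes B^{\otimes(n-1)}\otimes m)_{\Sigma_{n-1}}$.

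Next I would compute $\Bar_A\circ\triv_A$. Since $\triv_A$ and $\Bar_A$ are respectively the right and left adjoints of an adjunction parallel to the $(\triv_{\cO},\Bar_{\cO})$ one, the comonad $\Bar_A\circ\triv_A$ is computed by the relative bar construction $\Bar(1,\on{T}_A,-)$ on $\cM$, exactly as $\Bar_{\cO}\circ\triv_{\cO}$ is computed by $\Bar(1,\on{T}_{\cO},-)$ on $\cC$. The Koszul-dual cooperad identity $\cO^\vee=\Bar(\cO)$ used in Lemma \ref{l:identification of comonad} is a statement about the associative coalgebra structure in $\cX^\Sigma$; the module-level statement follows because the simplicial object computing $\Bar(1,\on{T}_A,m)$ is obtained from the one computing $\Bar(1,\on{T}_{\cO},-)$ by tensoring against $A$ in all but one slot and extracting the $m$-linear part — a construction compatible with geometric realization since the monoidal operation distributes over sifted colimits. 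Carrying this out identifies $\Bar_A\circ\triv_A(m)$ with $\on{S}_{A^\vee}(m)$ where $A^\vee=\Bar^{\on{enh}}_{\cO}(A)$, because passing from $\cO$ to $\cO^\vee$ on the operad side is exactly the bar construction, and evaluating the enhanced bar functor on $A$ produces the $\cP$-coalgebra whose "linearization at itself" is the comonad in question.

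Concretely, the cleanest route is to observe that $\m_A(\cM)$ and the whole adjunction $(\Bar_A,\triv_A)$ are the fibers over $A$ of the adjunction $(\Bar^{\on{enh}}_{\cO},\ldots)$ applied in the category $\cM\times\cC$, and then invoke Lemma \ref{l:identification of comonad} for that category: there $\Bar_{\cO}\circ\triv_{\cO}\simeq\on{S}_{\cO^\vee}$ as comonads, and taking the fiber of the induced structure over $A$ — respectively over $A^\vee=\Bar^{\on{enh}}_{\cO}(A)$ on the coalgebra side — yields $\Bar_A\circ\triv_A\simeq\on{S}_{A^\vee}$, using that $\Bar^{\on{enh}}_{\cO}$ on $\cM\times\cC$ is compatible with the projection to $\cO\alg(\cC)$ and sends $A$ (viewed via $\triv$ of the zero module) to $A^\vee$. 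I expect the main obstacle to be bookkeeping the coherences: making precise that the monad/comonad structures, not just the underlying endofunctors, are obtained by this fiber-over-$A$ operation, and checking that the bar construction commutes with passing to fibers. Since the argument is formally identical to — indeed a base change of — Lemma \ref{l:identification of comonad}, no genuinely new input is needed beyond the observation that $\on{T}_A$ is the $A$-linearization of $\on{T}_{\cO}$ and that linearization intertwines the two bar constructions.
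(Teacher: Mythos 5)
The paper offers no proof of this lemma: it is asserted as the module-level counterpart of Lemma \ref{l:identification of comonad}, which is itself stated without proof as a consequence of the definition of the Koszul dual cooperad. So there is no argument of the authors' to measure yours against; what can be assessed is whether your reduction to the ambient square-zero category $\cM\times\cC$ is sound. In outline it is: the identification of $\on{T}_A$ as the linearization of $\on{T}_\cO$ at $A$ is correct and is the key computational input, and invoking Lemma \ref{l:identification of comonad} for $\cM\times\cC$ is the natural strategy.

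However, two of your intermediate claims are wrong as stated, and the step you defer to ``bookkeeping'' is where the actual content lies. First, $\triv_A$ is \emph{not} the base change of $\triv_\cO$ on $\cM\times\cC$: the functor $\triv_\cO$ sends $(m,c)$ to the algebra with null-homotopic structure maps, which lies in the fiber over the \emph{trivial} algebra on $c$, not over $A$. What is true is that $\triv_A(m)$ is the object $(m,A)$ whose structure maps $\cO(n)\otimes A^{\otimes n-1}\otimes m\to m$ are null-homotopic while those on $A$ are the given ones; only $\oblv_A$ is literally a restriction of $\oblv_\cO$. Second, in the ind-nilpotent divided-powers convention relevant here (the one for which $\on{S}_B=\oblv_B\circ\coFree_B$ with $\coFree_B$ right adjoint to $\oblv_B$), the comonad is a coproduct $\underset{n\geq 1}\sqcup\,\bigl(\cP(n)\otimes B^{\otimes (n-1)}\otimes m\bigr)_{\Sigma_{n-1}}$, not a product. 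Third, and most substantively: the $\cM$-component of the absolute bar construction $\Bar(1,\on{T}_\cO,-)_\bullet$ evaluated on $m\oplus A$ in $\cM\times\cC$ is \emph{not} termwise the relative bar construction $\Bar(1,\on{T}_A,m)_\bullet$ --- in the former the copies of $A$ interleaved with $m$ are themselves being resolved --- so ``taking the fiber over $A$ of Lemma \ref{l:identification of comonad}'' does not immediately yield $\Bar_A\circ\triv_A$; one must still identify the realizations of these two simplicial objects (equivalently, check that passing to the fiber is compatible with the relevant left adjoints, which is not automatic for fibers of an adjunction). That identification is precisely what makes $A^\vee=\Bar^{\rm{enh}}_\cO(A)$ appear in the answer, so it cannot be waved away, though it is standard and your outline points in the right direction.
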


Hence, the functor $\Bar_A$ canonically upgrades to a functor
\begin{equation} \label{e:KD for modules}
\Bar^{\on{enh}}_A:\m_A(\cM)\longrightarrow \Comod^{\rm nil}_{A^\vee}(\cM),
\end{equation}
such that 
$$\Bar_A\simeq \oblv_{A^\vee}\circ \Bar^{\on{enh}}_A,$$
where $\oblv_{A^\vee}:\Comod^{\rm nil}_{A^\vee}(\cM)\to \cM$ is the forgetful functor.

\medskip

\begin{definition} \label{def:pro-nilp I}
We shall say that a $\cC$-module $\cM$ is pro-nilpotent if $\cM$
can be exhibited as 
$$\cM\simeq \underset{\BN^{\on{op}}}{\lim}\, \cM_i$$
(where the limit is taken in the $(\oo,1)$-category of $\cC$-modules),
such that 
\begin{itemize}

\item $\cM_0=0$;

\item For every $i\geq j$, the transition functor $f_{i,j}:\cM_{i}\to \cM_{j}$ commutes with limits;

\item For every $i$, the restriction of the action functor $\cC\otimes \cM_i\to \cM_i$ to
$\cC\otimes \on{ker}(f_{i,i-1})$ is null-homotopic.

\end{itemize}

\end{definition}

\medskip

As in Proposition \ref{p:duality for nilpotent} one proves:

\begin{prop}  \label{p:Koszul duality for modules}
Let $\cO\in \Op(\cX)$ be an operad, and $A\in \cO\alg(\cC)$ an $\cO$-algebra in $\cC$,
such that the adjunction $A\to \coBar^{\cO^\vee}(A^\vee)$ is a homotopy equivalence. Then
for a pro-nilpotent $\cC$-module $\cM$, the functor \eqref{e:KD for modules} 
$$\Bar^{\on{enh}}_A:\m_A(\cM)\longrightarrow \Comod^{\rm nil}_{A^\vee}(\cM)$$
is an equivalence.
\end{prop}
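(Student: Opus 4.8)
The strategy is to transcribe the proof of Proposition \ref{p:duality for nilpotent} almost verbatim, with the operad $\cO$ and the adjunction $\Bar_\cO \dashv \triv_\cO$ on $\cO\alg(\cC)$ replaced by the fixed $\cO$-algebra $A$ and the adjunction $\Bar_A \dashv \triv_A$ on $\m_A(\cM)$, and with the hypothesis ``$\cO$ is derived Koszul'' replaced by the hypothesis that the adjunction map $A \to \coBar^{\cO^\vee}(A^\vee)$ is a homotopy equivalence. The needed structure is already available: by the Lemma immediately preceding the statement, $\Bar_A \circ \triv_A \simeq \on{S}_{A^\vee}$, so that $\Bar_A$ upgrades to $\Bar^{\on{enh}}_A : \m_A(\cM) \to \Comod^{\rm nil}_{A^\vee}(\cM)$ with right adjoint $\coBar^{\on{enh}}_{A^\vee}$; dually, the comonad $\on{S}_{A^\vee}$ has a Koszul-dual monad, and the module analogue of Lemma \ref{l:identification of monad} furnishes a canonical homomorphism $\on{T}_{\coBar^{\cO^\vee}(A^\vee)} \to \coBar_{A^\vee} \circ \triv_{A^\vee}$, where I write $\coBar_{A^\vee} := \oblv_{A^\vee} \circ \coBar^{\on{enh}}_{A^\vee}$, the right adjoint of $\triv_{A^\vee} : \cM \to \Comod^{\rm nil}_{A^\vee}(\cM)$.

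First I would reduce to the case $\cM$ nilpotent. Writing $\cM \simeq \underset{\BN^{\on{op}}}{\lim}\, \cM_i$ as in Definition \ref{def:pro-nilp I}, the transition functors --- hence the evaluation functors $e_i : \cM \to \cM_i$ --- commute with both limits and colimits, so $\m_A(\cM) \simeq \underset{i}{\lim}\, \m_A(\cM_i)$ and $\Comod^{\rm nil}_{A^\vee}(\cM) \simeq \underset{i}{\lim}\, \Comod^{\rm nil}_{A^\vee}(\cM_i)$, compatibly with the forgetful functors to $\cM$; moreover $\Bar^{\on{enh}}_A$ commutes with the $e_i$ because it is built from the colimit-preserving functor $\Bar_A$, and $\coBar^{\on{enh}}_{A^\vee}$ commutes with the $e_i$ because it is built from the limit $\coBar_{A^\vee}$. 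Hence it suffices to prove the assertion for each $\cM_i$, i.e.\ we may assume $\cM$ nilpotent.

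With $\cM$ nilpotent, the key input is the exact analogue of Lemma \ref{l:cobar for nilpotent}: $\coBar_{A^\vee}$ is a totalization of a cosimplicial object of $\cM$ which, because sufficiently high ``tensor degrees'' of the $\cC$-action on $\cM$ are null-homotopic, coincides with the totalization of its restriction to $\Delta_{\leq n}$ for $n$ large, hence is a finite limit, hence (as $\cM$ is stable) also a finite colimit; therefore $\coBar_{A^\vee}$ commutes with sifted colimits, and consequently $\coBar^{\on{enh}}_{A^\vee}$ commutes with geometric realizations. The same computation shows the homomorphism $\on{T}_{\coBar^{\cO^\vee}(A^\vee)} \to \coBar_{A^\vee} \circ \triv_{A^\vee}$ is a homotopy equivalence, and combined with the hypothesis $A \simeq \coBar^{\cO^\vee}(A^\vee)$ we conclude that the composite $\on{T}_A \to \on{T}_{\coBar^{\cO^\vee}(A^\vee)} \to \coBar_{A^\vee} \circ \triv_{A^\vee}$ is a homotopy equivalence. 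Full faithfulness of $\Bar^{\on{enh}}_A$ then follows as in the proof of Proposition \ref{p:duality for nilpotent}: the unit $\on{Id} \to \coBar^{\on{enh}}_{A^\vee} \circ \Bar^{\on{enh}}_A$ need only be checked on objects in the essential image of $\Free_A$ (every object of $\m_A(\cM)$ is a geometric realization of such, and $\coBar^{\on{enh}}_{A^\vee}$ commutes with geometric realizations); applying the conservative, colimit-preserving functor $\oblv_A$ and using $\oblv_A \circ \Free_A \simeq \on{T}_A$ together with the module analogue of Diagram \eqref{d:O 3rd}, $\Bar^{\on{enh}}_A \circ \Free_A \simeq \triv_{A^\vee}$, the unit becomes the composite $\on{T}_A \to \coBar_{A^\vee} \circ \triv_{A^\vee}$ just shown to be an equivalence. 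Finally, to promote $\Bar^{\on{enh}}_A$ to an equivalence it remains to check that its right adjoint $\coBar^{\on{enh}}_{A^\vee}$ is conservative, for which it is enough that $\coBar_{A^\vee}$ is conservative in the nilpotent case; this is the verbatim minimal-index argument from the end of the proof of Proposition \ref{p:duality for nilpotent}: given $\alpha : B_1 \to B_2$ not a homotopy equivalence, pick minimal $i$ with $e_i(\alpha)$ not one, and note that the cokernel of the canonical map $\coBar_{A^\vee}(B) \to \oblv_{A^\vee}(B)$ involves only positive ``tensor degrees'', so $e_i$ applied to it depends only on $e_{i-1}(B)$ and hence agrees for $B_1$ and $B_2$; thus $e_i(\coBar_{A^\vee}(\alpha))$ --- and therefore $\coBar_{A^\vee}(\alpha)$ --- fails to be a homotopy equivalence.

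The main obstacle is not any of these steps individually but the bookkeeping underlying the first paragraph: pinning down, via the monad calculus of Section \ref{ss:monad recollections}, the module analogues of Lemmas \ref{l:identification of comonad} and \ref{l:identification of monad} --- i.e.\ identifying $\Bar_A \circ \triv_A$ with $\on{S}_{A^\vee}$ and the Koszul-dual monad of $\on{S}_{A^\vee}$ with $\on{T}_{\coBar^{\cO^\vee}(A^\vee)}$ --- and verifying that the canonical comparison $\on{T}_A \to \on{T}_{\coBar^{\cO^\vee}(A^\vee)}$ arising there is literally the map the hypothesis declares invertible. Once these coherences are in place, the nilpotence arguments above are a transcription of those proving Proposition \ref{p:duality for nilpotent}, with ``$n$-fold tensor power'' systematically replaced by ``$n$-fold action of $\cC$ on $\cM$''.
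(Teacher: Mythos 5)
Your proposal is correct and matches the paper's intent exactly: the paper gives no separate argument for this proposition, stating only ``As in Proposition \ref{p:duality for nilpotent} one proves,'' and your transcription --- reduction to the nilpotent case, the module analogue of Lemma \ref{l:cobar for nilpotent} giving commutation of $\coBar_{A^\vee}$ with sifted colimits and the identification of the dual monad, full faithfulness checked on free modules, and conservativity via the minimal-index argument --- is precisely the intended adaptation. Your closing remark correctly identifies where the real (if routine) work lies, namely the monadic bookkeeping that turns the hypothesis on $A\to \coBar^{\cO^\vee}(A^\vee)$ into invertibility of the unit on free modules.
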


\sssec{}

Let us take $\cX=\Vect_k$, where $\on{char}(k)=0$, and $\cO=\on{\Lie}$. From
Proposition \ref{p:Koszul duality for modules} and considerations analogous to those in
Section  \ref{ss:alg and coalg} we obtain:

\begin{cor} \label{c:KD Lie for modules}
Let $\cC$ be pro-nilpotent, and let $\cM$ be a pro-nilpotent $\cC$-module. Then for
$L\in \on{Lie}\alg(\cC)$ and $B:=L^\vee\in\on{Com}\coalg(\cC)$, the homological Chevalley
complex functor
$$\C_L:\m_L(\cM)\longrightarrow \Comod_{B}(\cM)$$
is an equivalence of $\oo$-categories.
\end{cor}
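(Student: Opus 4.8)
The plan is to deduce this from Proposition \ref{p:Koszul duality for modules} exactly as Proposition \ref{p:Quillen for Lie general} was deduced from Proposition \ref{p:duality for nilpotent}, combining the operadic statement with the characteristic-zero identifications that let us pass between divided-power and ordinary coalgebras and between ind-nilpotent and all coalgebras. First I would recall that for $\cX=\Vect_k$ with $\on{char}(k)=0$ the Lie operad $\Lie$ is derived Koszul with $\Lie^\vee \simeq \on{Com}[1]$, and that $\Bar^{\on{enh}}_{\Lie}(L)\simeq \C(L)[-1]$ as a $\on{Com}[1]$-coalgebra; here the hypothesis $A\to \coBar^{\cO^\vee}(A^\vee)$ being a homotopy equivalence in Proposition \ref{p:Koszul duality for modules} is automatic, since it follows from $\Lie$ being derived Koszul together with the pro-nilpotence of $\cC$ (this is the content of the fully-faithfulness half of Proposition \ref{p:duality for nilpotent}). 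Thus $B:=L^\vee$ is, up to the shift and the characteristic-zero identifications, the object $\C(L)\in\on{Com}\coalg(\cC)$, which is the image of $L$ under the equivalence of Proposition \ref{p:Quillen for Lie general}.

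Next I would apply Proposition \ref{p:Koszul duality for modules} directly to $\cO=\Lie$, $A=L$, and the pro-nilpotent $\cC$-module $\cM$, obtaining an equivalence
$$\Bar^{\on{enh}}_L:\m_L(\cM)\;\overset{\sim}\longrightarrow\;\Comod^{\rm nil}_{L^\vee}(\cM),$$
where the right-hand side is the $\oo$-category of ind-nilpotent $\on{Com}[1]$-coalgebra-comodules with divided powers over $L^\vee=\C(L)[-1]$. It then remains to identify $\Bar^{\on{enh}}_L$, up to the shift $[-1]$, with the Chevalley functor $\C_L$, and to identify the target $\Comod^{\rm nil}_{L^\vee}(\cM)$ with $\Comod_B(\cM)$. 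The first identification is formal from the construction of $\C_L$ as the relative bar construction $\Bar(\triv_L,L,-)$ on the module side, mirroring the identification of $\Bar^{\on{enh}}_{\Lie}$ with $\C[-1]$ on the algebra side in the proof of Proposition \ref{p:Quillen for Lie general}. For the second, I would invoke the module-level analogues of Proposition \ref{p:sum vs prod}: the pro-nilpotence of $\cM$ makes the two right-lax actions of $\cX^\Sigma$ on $\cM$ (sums versus products) tautologically agree, so ``ind-nilpotent'' comodules coincide with ``all'' comodules; and the characteristic-zero hypothesis makes the trace map $(-)_{\Sigma_n}\to(-)^{\Sigma_n}$ an equivalence, so the divided-power comodules coincide with ordinary comodules. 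Composing these identifications with the shift by $[1]$ absorbs the $[-1]$ and yields the stated equivalence $\C_L:\m_L(\cM)\to\Comod_B(\cM)$.

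The one point requiring genuine care — and the main obstacle — is verifying that the proof of Proposition \ref{p:duality for nilpotent} really does go through verbatim for the module situation of Proposition \ref{p:Koszul duality for modules}, i.e., that all the ingredients used there (the reduction from pro-nilpotent to nilpotent via component-wise limits and colimits; Lemma \ref{l:cobar for nilpotent}, which uses that $n$-fold tensor powers vanish in a nilpotent $\cC$ and that finite limits are finite colimits in a stable category; the conservativity of $\oblv$ and the fact that every object is a geometric realization of free objects) have exact counterparts once $\cC$ is replaced by the $\cC$-module $\cM$ and ``$n$-fold tensor power'' is replaced by ``$(n-1)$-fold tensor power of the algebra acting on the module.'' Since Proposition \ref{p:Koszul duality for modules} is already stated above (``as in Proposition \ref{p:duality for nilpotent} one proves''), for the purposes of this corollary I may take it as given; the only remaining work is the bookkeeping of the three identifications above, each of which is routine in characteristic zero under the pro-nilpotence hypotheses.
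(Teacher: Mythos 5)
Your proposal is correct and follows exactly the paper's route: the paper deduces the corollary from Proposition \ref{p:Koszul duality for modules} (whose hypothesis on $L\to\coBar^{\cO^\vee}(L^\vee)$ is indeed supplied by the fully-faithfulness half of Proposition \ref{p:duality for nilpotent}, since $\cC$ is pro-nilpotent and $\Lie$ is derived Koszul) together with ``considerations analogous to those in Section \ref{ss:alg and coalg},'' i.e.\ precisely the module-level analogues of Proposition \ref{p:sum vs prod} and the characteristic-zero identification of divided-power with ordinary comodules that you spell out. Your write-up is in fact more explicit than the paper's one-line derivation, and the bookkeeping of the shift $[1]$ identifying $\Bar^{\on{enh}}_L$ with $\C_L$ is handled correctly.
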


\ssec{Chiral and $\star$ actions}

Let us take $\cC$ to be $\fD(\ran X)$ with either the $\star$ or the chiral symmetric monoidal
structures. We shall now recall a class of $\fD(\ran X)$-module $\oo$-categories for both
structures. These categories we first introduced in \cite{ro}. 

\sssec{}

Let $I_0$ be a fixed finite set. Let $\fset_{I_0}^{\rm surj}$ be the category
of finite sets $I$ equipped with an \emph{arbitrary map} $\pi_0:I_0\to I$, where the morphisms
are maps under $I_0$ that are surjective. 

\medskip

As in Section  \ref{ss:defn of Ran}, for a separated scheme $X$ we consider the
functor
$$X^{(\fset_{I_0}^{\rm surj})^{\on{op}}}\longrightarrow \on{Sch}:(I,\pi_0:I_0\to I)\squig X^I,$$
and the corresponding functor
\begin{equation} \label{e:Ran diag I}
\fD^!(X^{(\fset_{I_0}^{\rm surj})^{\on{op}}}):\fset_{I_0}^{\rm surj}\longrightarrow  \ooCat^{\on{st}}_{\on{pres},\on{cont}}.
\end{equation}

\medskip

\begin{definition} \label{d:ran I}
The $\oo$-category $\fD(\ran_{I_0} X)$ is the limit of the functor in \eqref{e:Ran diag I} in $\ooCat^{\on{st}}_{\on{pres},\on{cont}}$.
\end{definition}

For $(I,\pi_0:I_0\to I)$ we let $(\Delta^I_{I_0})^!$ denote the tautological evaluation 
functor $$\fD(\ran_{I_0} X)\longrightarrow \fD(X^I).$$ For $I=I_0$ and $\pi_0=\id$, we will shall also use the notation
$(\Delta^{\on{main}}_{I_0})^!$. 

\medskip

As in Section  \ref{ss:defn of Ran}, we have a canonical equivalence
\begin{equation} \label{e:Ran as colimit I}
\fD(\ran_{I_0} X)\simeq \underset{(\fset_{I_0}^{\rm surj})^{\on{op}}}{\colim}\, \fD^*(X^{\fset_{I_0}^{\rm surj}}).
\end{equation}

For $(I,\pi_0:I_0\to I)$ we let $(\Delta^I_{I_0})_*$ denote the resulting
functor $\fD(\ran_{I_0} X)\to \fD(X^I)$, which is easily seen to be the left adjoint of 
$(\Delta^I_{I_0})^!$. 

\medskip

For $I=I_0$ and $\pi_0=\id$, we will shall also use the notation
$(\Delta^{\on{main}}_{I_0})_*$. It is easy to see that the adjunction map
$$\on{Id}\longrightarrow (\Delta^{\on{main}}_{I_0})^!\circ (\Delta^{\on{main}}_{I_0})_*$$
is a homotopy equivalence, i.e., the functor $(\Delta^{\on{main}}_{I_0})_*$ is
fully faithful.

\begin{definition}
We shall say that an object of $\fD(\ran_{I_0} X)$ is supported on $X^{I_0}$ if it lies
in the essential image of $(\Delta^{\on{main}}_{I_0})_*$.
\end{definition}

\sssec{}

We shall now introduce the actions of $\fD(\ran X)$ on $\fD(\ran_{I_0} X)$ in the $\star$ and chiral contexts.
We shall define the corresponding functors 
\begin{equation} \label{e:J fold tensor I}
\fD(\ran X)^{\otimes J}\otimes \fD(\ran_{I_0} X)\longrightarrow \fD(\ran_{I_0} X)
\end{equation}
in both contexts, in the style of Section  \ref{ss:defn of conv expl}. Upgrading them to 
the actual datum of action is done as in Section  \ref{ss:defn of conv}. 

\medskip

Using \eqref{e:Ran as colimit} and \eqref{e:Ran as colimit I}, to define a functor as in \eqref{e:J fold tensor I} it suffices
to define a functor
$$m_{J,I_0}:
\underset{J}{\underbrace{(\fset^{\rm surj})^{\on{op}}\times...\times  (\fset^{\rm surj})^{\on{op}}}}\times 
(\fset_{I_0}^{\rm surj})^{\on{op}} \longrightarrow (\fset_{I_0}^{\rm surj})^{\on{op}}$$
and a natural transformation between the resulting two functors
$$\underset{J}{\underbrace{(\fset^{\rm surj})^{\on{op}}\times...\times  (\fset^{\rm surj})^{\on{op}}}}\times
(\fset_{I_0}^{\rm surj})^{\on{op}}\rightrightarrows
\ooCat^{\on{st}}_{\on{pres}}:$$
\begin{equation} \label{e:nat transf for monoidal I}
\left((I_J,I_0\to I')\squig (\underset{j\in J}\otimes \fD(X^{I_j}))\otimes \fD(X^{I'})\right)\Rightarrow 
\left((I_J,I_0\to I')\squig \fD(X^{m_{J,I_0}(I_J,I_0\to I')})\right).
\end{equation}

Here $I_J$ has the same meaning as in Section  \ref{sss:defn of monoidal}, and $I_0\to I'$
is an object of $\fset_{I_0}$.

\sssec{}

In both cases we set
$$m_{J,I_0}(I_J,I_0\to I'):=I\sqcup I',$$
where $I:=\underset{j\in J}\sqcup\, I_j$, 
with the map
$$I_0\to I'\hookrightarrow I\sqcup I'.$$ 
We let $\pi$ denote the map $I\twoheadrightarrow J$ as in Section  \ref{ss:defn of conv}. Let
$\pi_{I_0}$ denote the map $I\sqcup I'\to J\sqcup \on{pt}$ that sends $I'$ to $\on{pt}$.

\medskip

For the $\star$ action, we define the functor of \eqref{e:nat transf for monoidal I} to be the external tensor
product 
$$\Bigl(M^{I_j}\in \fD(X^{I_j}),\,M^{I'}\in \fD(X^{I'})\Bigr)\squig \left((\underset{j}\boxtimes M^{I_j})\boxtimes M^{I'}
\in \fD(X^{I\sqcup I'})\right).$$

\medskip

For the chiral action we let the natural transformation \eqref{e:nat transf for monoidal I} to
be 
$$\left(M^{I_j}\in \fD(X^{I_j}),\,M^{I'}\in \fD(X^{I'})\right)\squig 
\left(\jmath(\pi_{I_0})_*\circ \jmath(\pi_{I_0})^*\bigl((\underset{j}\boxtimes M^{I_j})\boxtimes M^{I'}\bigr)\in \fD(X^{I\sqcup I'})\right).$$

\sssec{}

As in Lemma \ref{l:ch product on open}, for $M_j\in \fD(\ran X)$, $j\in J$ and $M'\in \fD(\ran_{I_0} X)$ we have
an explicit description of the object 
$$(\underset{j\in J}{\otimes^{\ch}}\, M_j)\otimes^{\ch} M'\in \fD(\ran_{I_0} X).$$ 
Namely, for $(I,\pi_0:I_0\to I)\in \fset_{I_0}$ we have a canonical
homotopy equivalence
\begin{equation} \label{e:ch product on open I}
(\Delta^{I}_{I_0})^!\left((\underset{j\in J}{\otimes^{\ch}}\, M_j)\otimes^{\ch} M'\right)\simeq
\underset{\pi_{\on{pt}}}\oplus\, \jmath(\pi_{\on{pt}})_*\circ 
\jmath(\pi_{\on{pt}})^*\left((\underset{j\in J}\boxtimes\, (\Delta^{I_j})^!(M_j))\boxtimes
(\Delta^{I'}_{I_0})^!(M')\right),
\end{equation}
where the direct sum is taken over the set of all surjections $\pi_{\on{pt}}:I\to J\sqcup \on{pt}$, such that 
$\pi_{\on{pt}}\circ \pi_0$ sends $I_0\to \on{pt}\in J\sqcup \on{pt}$, and where $I'\subset I$ equals
$(\pi_{\on{pt}})^{-1}(\on{pt})$. 

\medskip

As in Section  \ref{ss:proof of nilp}, the homotopy equivalence \eqref{e:ch product on open I} implies 
that the chiral action of $\fD(\ran X)$ on the module category $\fD(\ran_{I_0} X)$ is nilpotent.

\ssec{Chiral and factorization modules}

\sssec{}

For $A\in \Lie\alg^{\ch}(\ran X)$, we let $\m^{\ch}_A(\ran_{I_0} X)$ denote the resulting $\oo$-category of modules in 
$\fD(\ran_{I_0} X)$. We call its objects chiral $A$-modules on $\ran_{I_0} X$. 

\medskip

We shall denote by $\m^{\ch}_A(X^{I_0})\subset \m^{\ch}_A(\ran_{I_0} X)$ the full subcategory spanned
by objects supported on $X^{I_0}$. We shall call its objects chiral $A$-modules on $X^{I_0}$.

\medskip

Similarly, given $B\in \on{Com}\coalg(\ran _X)$, let $\Comod^{\ch}_B(\ran_{I_0} X)$ denote the $\oo$-category of
$B$-comodules in $\fD(\ran_{I_0} X)$. We shall call its objects chiral $B$-comodules on $\ran_{I_0} X$.

\medskip

From Theorem \ref{t:Koszul} and Corollary \ref{c:KD Lie for modules} we obtain:

\begin{cor} \label{c:chiral Koszul for modules} For any $A\in \Lie\alg^{\ch}(\ran X)$, the functor
$$\C_A^{\ch}:\m^{\ch}_A(\ran_{I_0} X)\longrightarrow \Comod^{\ch}_{\C^{\ch}(A)}(\ran_{I_0} X)$$
is an equivalence.
\end{cor}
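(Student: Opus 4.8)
The plan is to deduce Corollary \ref{c:chiral Koszul for modules} by checking the hypotheses of Corollary \ref{c:KD Lie for modules}, applied to the symmetric monoidal $\oo$-category $\cC = \fD(\ran X)$ equipped with the chiral tensor structure, acting on the module category $\cM = \fD(\ran_{I_0} X)$ with its chiral action. First I would recall from Theorem \ref{t:Koszul} (or rather from its proof in Section \ref{ss:proof of nilp}) that $\fD(\ran X)$ with the chiral monoidal structure is pro-nilpotent; in particular the operad $\Lie$ is derived Koszul with $\Lie^\vee = \on{Com}[1]$, and the Koszul duality adjunction $A \to \coBar^{\cO^\vee}(A^\vee)$ is a homotopy equivalence for every $A$ since it is an equivalence of categories. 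So the only genuinely new point is to verify that $\fD(\ran_{I_0} X)$ is a pro-nilpotent $\cC$-module in the sense of Definition \ref{def:pro-nilp I}.

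For that step I would run the same argument as in Section \ref{ss:proof of nilp}, but one level up. Namely, for each positive integer $n$ and each $(I,\pi_0\colon I_0\to I)\in \fset_{I_0}^{\rm surj}$, stratify $X^I$ by the locus where at most $n$ of the coordinates \emph{not} in the image of $I_0$ collide into previously-unoccupied points — more precisely, set $X^{I,\leq n}_{I_0}$ to be the union of images of the diagonals $\Delta(\psi)\colon X^{I'}\to X^I$ over surjections $\psi\colon I\twoheadrightarrow I'$ under $I_0$ with $|I' \setminus \on{im}(I_0)| \leq n$, and let $X^{I,>n}_{I_0}$ be its open complement. These are functorial in $\fset_{I_0}^{\rm surj}$, so taking limits of $\fD^!$ produces categories $\fD(\ran_{I_0}^{\leq n} X)$ fitting into short exact sequences $\fD(\ran_{I_0}^{\leq n}X)\rightleftarrows\fD(\ran_{I_0}X)\rightleftarrows\fD(\ran_{I_0}^{>n}X)$, with $\fD(\ran_{I_0}X)\simeq \lim_n \fD(\ran_{I_0}^{\leq n}X)$ because each $(\imath^{I,n}_{I_0})^!$ is an equivalence once $n\geq |I|$. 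The explicit formula \eqref{e:ch product on open I} shows that the essential image of $\fD(\ran_{I_0}^{>n}X)$ is a $\fD(\ran X)$-submodule ideal, so the quotient $\fD(\ran_{I_0}^{\leq n}X)$ inherits a $\fD(\ran X)$-module structure, the transition functors are module functors, and the action of $\fD(\ran X)$ on the kernel of $\fD(\ran_{I_0}^{\leq n}X)\to \fD(\ran_{I_0}^{\leq n-1}X)$ is null-homotopic — again directly from \eqref{e:ch product on open I}, exactly as in the monoidal case. This gives the three bullet points of Definition \ref{def:pro-nilp I}.

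With pro-nilpotence of the module in hand, Corollary \ref{c:KD Lie for modules} applies verbatim with $L = A$ and $B = \C^{\ch}(A) = A^\vee$, yielding that $\C^{\ch}_A\colon \m^{\ch}_A(\ran_{I_0}X)\to \Comod^{\ch}_{\C^{\ch}(A)}(\ran_{I_0}X)$ is an equivalence; one should also note that the $B$-comodule category appearing in Corollary \ref{c:KD Lie for modules} is the ind-nilpotent (equivalently, in characteristic zero and the pro-nilpotent setting, the divided-power and the ``all'') version, so the identification with $\Comod^{\ch}_B(\ran_{I_0}X)$ as defined in Section \ref{s:modules} is the analogue of Proposition \ref{p:sum vs prod} for modules, which goes through by the same nilpotence reduction. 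The main obstacle is the bookkeeping in setting up the correct stratification of $X^I$ relative to the fixed marked set $I_0$ and checking that the ideal property and the null-homotopy of the action follow cleanly from \eqref{e:ch product on open I}; once that is arranged, the rest is a formal appeal to the already-established abstract Koszul duality for pro-nilpotent modules.
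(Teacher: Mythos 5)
Your proposal follows the paper's own route: the paper likewise deduces the corollary from Theorem \ref{t:Koszul} together with Corollary \ref{c:KD Lie for modules}, having observed (by appeal to the argument of Section \ref{ss:proof of nilp} and the homotopy equivalence \eqref{e:ch product on open I}) that the chiral action of $\fD(\ran X)$ on $\fD(\ran_{I_0} X)$ is pro-nilpotent. The only difference is that you spell out an explicit stratification witnessing this pro-nilpotence, which the paper leaves implicit; your filtration by the number of points not marked by $I_0$ does the job (after the evident index shift so that the zeroth term of the tower vanishes).
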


\sssec{}

Let $B$ be an object of $\on{Com}\coalg^{\ch}(\ran X)$, and let $N$ be an object
of $\Comod^{\ch}_B(\ran_{I_0} X)$.  

\medskip

For an object $(I,\pi_0:I_0\to I)\in \fset_{I_0}$
and a map $\pi_{\on{pt}}:I\to J\sqcup \on{pt}$, from \eqref{e:ch product on open I} we obtain a map
$$(\Delta^I_{I_0})^!(N)\longrightarrow \jmath(\pi_{\on{pt}})_*\circ 
\jmath(\pi_{\on{pt}})^*\left((\underset{j\in J}\boxtimes\, (\Delta^{I_j})^!(B))\boxtimes
(\Delta^{I'}_{I_0})^!(N)\right),$$
and by adjunction a map
\begin{equation} \label{e:factorization map I}
\jmath(\pi_{\on{pt}})^*\left((\Delta^I_{I_0})^!(N)\right)\longrightarrow 
\jmath(\pi_{\on{pt}})^*\left((\underset{j\in J}\boxtimes\, (\Delta^{I_j})^!(B))\boxtimes
(\Delta^{I'}_{I_0})^!(N)\right).
\end{equation}

\medskip

Assume now that $B$ is a factorization coalgebra.

\begin{definition}
$N$ is a factorization $B$-comodule if, for every $(I,\pi_0:I_0\to I)$
and a map $\pi_{\on{pt}}:I\to J\sqcup \on{pt}$ as above, the map in \eqref{e:factorization map I} is a homotopy equivalence.
\end{definition}

We let $\Comod^{\on{Fact}}_B(\ran_{I_0} X)$ denote the full subcategory of $\Comod_B(\ran_{I_0} X)$ spanned
by factorization modules.

\medskip

As in Section  \ref{ss:factorization}, one shows:
\begin{cor}
Let $A$ be a chiral Lie algebra on $X$. The equivalence of Corollary \ref{c:chiral Koszul for modules} induces 
an equivalence between
the subcategory $\m_A(X^{I_0})$ of $\m_A(\ran_{I_0} X)$ spanned by modules supported on $X^{I_0}$,
and the subcategory of factorization $\C^{\ch}(A)$-comodules:
$$\m_A(X^{I_0})\simeq \Comod_{\C^{\ch}(A)}^{\on{Fact}}(\ran_{I_0} X).$$

\end{cor}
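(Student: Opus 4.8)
The strategy is to imitate the proof of Theorem~\ref{t:factorization} in Section~\ref{ss:factorization}, carrying the marked set $I_0$ along throughout. Fix $A\in\Lie\alg^{\ch}(X)$ and put $B:=\C^{\ch}(A)$; by Theorem~\ref{t:factorization} $B$ is a factorization coalgebra, so $\Comod^{\on{Fact}}_B(\ran_{I_0}X)$ is defined, and by Corollary~\ref{c:chiral Koszul for modules} the functor $\C^{\ch}_A\colon\m^{\ch}_A(\ran_{I_0}X)\to\Comod^{\ch}_B(\ran_{I_0}X)$ is an equivalence. Since $\m^{\ch}_A(X^{I_0})$ and $\Comod^{\on{Fact}}_B(\ran_{I_0}X)$ are full subcategories, it suffices to show: for $M\in\m^{\ch}_A(\ran_{I_0}X)$, the comodule $\C^{\ch}_A(M)$ factorizes if and only if $M$ is supported on $X^{I_0}$.

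Two ingredients, each a pointed analogue of what appears in Section~\ref{ss:factorization}, set up the computation. First, an induction on $|I|$ parallel to Section~\ref{sss:reduction to simple factorization} — using the diagonal pullbacks $(\Delta^\phi)^!$ to descend to smaller index sets, and using that $B$ is \emph{already} factorizing so that its tensor factors split off — reduces the verification of \eqref{e:factorization map I} for $\C^{\ch}_A(M)$ to the single most refined situation for each $(I,\pi_0)$: $I=I_0\sqcup K$, $\pi_0$ the inclusion, $J=K$ with singleton blocks, $I'=I_0$, and $\pi_{\on{pt}}\colon I_0\sqcup K\twoheadrightarrow K\sqcup\on{pt}$ the identity on $K$ collapsing $I_0$. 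Second, the module counterpart of Section~\ref{sss:filtration on Chevalley course} endows $\C^{\ch}_A(M)\in\fD(\ran_{I_0}X)$ with a canonical filtration, weakly compatible with the $B$-coaction as in Section~\ref{sss:filtration on Chevalley} (hence inducing the comultiplications \eqref{e:comult on gr} on associated graded), with $\on{gr}^0(\C^{\ch}_A(M))\simeq M$ and $\on{gr}^k(\C^{\ch}_A(M))\simeq\on{Sym}^{k,\ch}(A[1])\ot^{\ch}M$ for $k\geq1$, where $\ot^{\ch}$ is the chiral action of $\fD(\ran X)$ on $\fD(\ran_{I_0}X)$.

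Combine these with the explicit description \eqref{e:ch product on open I} of $\ot^{\ch}$ on open strata and with $(\Delta^{\on{main}})^!(B)\simeq A_X[1]$ (established inside the proof of Theorem~\ref{t:factorization}). For any $M$, evaluation at the bare clump kills $\on{gr}^{\geq 1}$, so $(\Delta^{\on{main}}_{I_0})^!(\C^{\ch}_A(M))\simeq(\Delta^{\on{main}}_{I_0})^!(M)$, and hence in the refined case the right-hand side of \eqref{e:factorization map I} is $\jmath(\pi_{\on{pt}})^*\bigl((A_X[1])^{\boxtimes K}\boxtimes(\Delta^{\on{main}}_{I_0})^!(M)\bigr)$. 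For the \emph{if} direction let $M=(\Delta^{\on{main}}_{I_0})_*(M_{X^{I_0}})$; then \eqref{e:ch product on open I}, together with $A$ being supported on $X$ and $M$ on $X^{I_0}$, shows that after $\jmath(\pi_{\on{pt}})^*\circ(\Delta^{I_0\sqcup K}_{I_0})^!$ all graded pieces vanish save $\on{gr}^{|K|}\simeq\jmath(\pi_{\on{pt}})^*\bigl((A_X[1])^{\boxtimes K}\boxtimes M_{X^{I_0}}\bigr)$, and \eqref{e:comult on gr} identifies \eqref{e:factorization map I} with this isomorphism; so $\C^{\ch}_A(M)$ factorizes. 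For the \emph{only if} direction, if $M$ is not supported on $X^{I_0}$ pick $K\neq\emptyset$ of minimal cardinality with $\jmath(\pi_{\on{pt}})^*\bigl((\Delta^{I_0\sqcup K}_{I_0})^!(M)\bigr)\neq0$; minimality kills $\on{gr}^k$ for $1\leq k<|K|$ after restriction (the surviving $M$-factors are restrictions of $M$ over clump-spaces with fewer than $|K|$ extra points) and disjointness in $\ot^{\ch}$ kills $k>|K|$, so $\jmath(\pi_{\on{pt}})^*(\Delta^{I_0\sqcup K}_{I_0})^!(\C^{\ch}_A(M))$ carries a two-step filtration with $\on{gr}^0\neq0$ and $\on{gr}^{|K|}$ equal to the right-hand side of \eqref{e:factorization map I}; tracing \eqref{e:comult on gr} shows \eqref{e:factorization map I} is the projection onto $\on{gr}^{|K|}$, which annihilates $\on{gr}^0$ and so is not a homotopy equivalence. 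Thus $\C^{\ch}_A(M)$ does not factorize, and $\C^{\ch}_A$ restricts to the asserted equivalence.

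The step I expect to be the main obstacle is the first reduction: making precise, in the pointed setting, the inductive argument of Section~\ref{sss:reduction to simple factorization} — i.e.\ pinning down exactly which surjections $\pi_{\on{pt}}\colon I\twoheadrightarrow J\sqcup\on{pt}$ must be tested and verifying that, granted the factorization of $B$, all of them follow from the single refined case — together with the correlated task of expressing ``supported on $X^{I_0}$'' as the vanishing condition $\jmath(\pi_{\on{pt}})^*(\Delta^{I}_{I_0})^!(-)=0$ that legitimizes the minimality argument. Once that is in place, the filtration bookkeeping is routine and parallels the proof of Theorem~\ref{t:factorization} essentially verbatim.
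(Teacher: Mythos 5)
Your proposal is correct and follows exactly the route the paper intends: the paper's entire proof of this corollary is the remark ``As in Section \ref{ss:factorization}, one shows,'' and your argument is precisely the faithful transport of the proof of Theorem \ref{t:factorization} to the pointed setting, using Corollary \ref{c:chiral Koszul for modules}, the $I_0$-analogue of the reduction in Section \ref{sss:reduction to simple factorization}, and the filtration of Section \ref{sss:filtration on Chevalley course} with $\on{gr}^0\simeq M$. No discrepancy with the paper's approach.
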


\end{document}